\newtheorem{theorem}{Theorem}[section]
\newtheorem{prop}[theorem]{Proposition}
\newtheorem{lemma}[theorem]{Lemma}
\newtheorem{cor}[theorem]{Corollary}
\newtheorem{definition}[theorem]{Definition}
\newtheorem{remark}[theorem]{Remark}
\newcommand{\bR}{\mathbb{R}}
\newcommand{\vep}{\varepsilon}
\newcommand{\sW}{\mathcal{W}}
\newcommand{\calO}{\mathcal O}
\newcommand{\del}{\partial}
\newcommand{\RR}{\mathbb R}
\DeclareMathOperator{\tr}{tr}
\DeclareMathOperator{\dvol}{dV}
\DeclareMathOperator{\Rc}{R}
\DeclareMathOperator{\Ric}{Ric}
\renewcommand{\P}{\mathrm{P}} 
\DeclareMathOperator{\W}{W}
\DeclareMathOperator{\Ba}{Ba}
\DeclareMathOperator{\Scal}{S}
\DeclareMathOperator{\R}{R}
\newcommand{\gbar}{\overline{g}}
\newcommand{\riem}{\R}
\newcommand{\riemdddd}[4]{\riem_{#1 #2 #3 #4}}
\newcommand{\riemdudu}[4]{\riem^{\phantom{#1} #2 \phantom{#3} #4}_{#1 \phantom{#2} #3}}
\newcommand{\riemddud}[4]{\riem^{\phantom{#1 #2} #3}_{#1 #2 \phantom{#3} #4}}
\newcommand{\riemdddu}[4]{\riem^{\phantom{#1 #2 #3} #4}_{#1 #2 #3}}
\newcommand{\riemuudu}[4]{\riem_{\phantom{#1 #2} #3 \phantom{#4}}^{#1 #2 \phantom{#3} #4 }}
\newcommand{\riemuddu}[4]{\riem^{#1 \phantom{#2 #3} #4}_{\phantom{#1} #2 #3 \phantom{#4}}}
\newcommand{\inn}[2]{\left\langle #1 \, , \, #2 \right\rangle}
\newcommand{\ttv}{\stackrel{\circ}{v}}
\newcommand{\vdd}[2]{v_{#1 #2}}
\newcommand{\vdu}[2]{v_{#1}^{\phantom{#1} #2}}
\newcommand{\vddu}[3]{v_{#1#2,}^{\phantom{#1#2}#3}}
\newcommand{\vdud}[3]{v_{#1\phantom{#2},#3}^{\phantom{#1}#2}}
\newcommand{\vdddu}[4]{v_{#1 #2, #3}^{\phantom{#1#2,#3} #4}}
\newcommand{\vddud}[4]{v_{#1 #2, \phantom{#3} #4}^{\phantom{#1#2,} #3 \phantom{#4}}}
\newcommand{\vdudd}[4]{v_{#1 \phantom{#2}, #3 #4}^{\phantom{#1} #2 \phantom{, #3 #4}}}
\newcommand{\vdudu}[4]{v_{#1 \phantom{#2}, #3 \phantom{#4}}^{\phantom{#1} #2 \phantom{, #3} #4}}
\newcommand{\vdddudu}[6]{v_{#1#2,#3\phantom{#4}#5\phantom{#6}}^{\phantom{#1#2,#3}#4\phantom{#5}#6}}
\newcommand{\vddduud}[6]{v_{#1#2,#3\phantom{#4#5}#6}^{\phantom{#1#2,#3}#4#5\phantom{#6}}}
\newcommand{\vdduddu}[6]{v_{#1#2,\phantom{#3}#4#5\phantom{#6}}^{\phantom{#1#2,}#3\phantom{#4#5}#6}}
\newcommand{\vddudud}[6]{v_{#1#2,\phantom{#3}#4\phantom{#5}#6}^{\phantom{#1#2,}#3\phantom{#4}#5}}
\newcommand{\vdudddu}[6]{v_{#1\phantom{#2},#3#4#5\phantom{#6}}^{\phantom{#1}#2\phantom{,#3#4#5}#6}}
\newcommand{\vdddudd}[6]{v_{#1#2,#3\phantom{#4}#5#6}^{\phantom{#1#2,#3}#4\phantom{#5#6}}}
\newcommand{\vdduddd}[6]{v_{#1#2,\phantom{#3}#4#5#6}^{\phantom{#1#2,}#3\phantom{#4#5#6}}}
\newcommand{\vdudddd}[6]{v_{#1\phantom{#2},#3#4#5#6}^{\phantom{#1}#2\phantom{,#3#4#5#6}}}
\newcommand{\gendddudu}[7]{{#1}_{#2#3,#4\phantom{#5}#6\phantom{#7}}^{\phantom{#2#3,#4}#5\phantom{#6}#7}}
\newcommand{\gendddu}[5]{{#1}_{#2 #3, #4}^{\phantom{#2#3,#4} #5}}
\newcommand{\genddduu}[6]{{#1}_{#2 #3, #4}^{\phantom{#2#3,#4} #5#6}}
\newcommand{\genddudu}[6]{{#1}_{#2 #3,\phantom{#4} #5}^{\phantom{#2#3,}#4 \phantom{#5}#6}}
\newcommand{\fdudu}[4]{{f,}_{#1 \phantom{#2} #3 \phantom{#4}}^{\phantom{#1} #2 \phantom{#3} #4}}
\newcommand{\fdu}[2]{{f,}_{#1}^{\phantom{#1} #2}}
\newcommand{\varChr}[4]{ \frac{1}{2} g^{#3 #4} \left(  v_{#4 #2,#1} +  v_{#4 #1,#2} -  v_{#1 #2,#4} \right) }
\begin{document}

\title[Bach flow]{Linear and nonlinear stability for the Bach flow, I} 

\author[Bahuaud]{Eric Bahuaud}
\address{Department of Mathematics,
	Seattle University,
	Seattle, WA, 98122, USA}
\email{bahuaude [AT] seattleu [DOT] edu}
\author[Guenther]{Christine Guenther}
\address{Department of Mathematics,
	Pacific University,
	Forest Grove, OR 97116, USA}
\email{guenther [AT] pacificu [DOT] edu}
\author[Isenberg]{James Isenberg}
\address{Department of Mathematics,
	University of Oregon,
	Eugene, OR 97403-1222 USA}
\email{isenberg [AT] uoregon [DOT] edu}
\author[Mazzeo]{Rafe Mazzeo}
\address{Department of Mathematics, Stanford University, Stanford, CA}
\email{rmazzeo [AT] stanford [DOT] edu}

\date{\today}
\subjclass[2010]{58J35; 53E40; 35K}
\keywords{Bach tensor; Higher-order geometric flows; Linear stability; Dynamical stability; Constant curvature metrics; Asymptotically Hyperbolic metrics}
\date{\today}

\begin{abstract}
In this paper we prove the linear stability of a gauge-modified version of the Bach flow on any complete manifold $(M, h)$ of
constant curvature. This involves some intricate calculations to obtain spectral bounds, and in particular introduces a higher order generalization
of the well-known Koiso identity.  We also prove nonlinear stability for the Bach flow if $(M, h)$ is the hyperbolic space $\mathbb H^n$, and more generally 
any Poincar\'e-Einstein space sufficiently close to $\mathbb H^n$.
In the forthcoming Part II of this project, we study the nonlinear stability question if $M$ is either compact or else noncompact and flat, since
those cases require different considerations involving a center manifold.
\end{abstract}

\maketitle

\section{Introduction}
The search for canonical metrics in Riemannian geometry has a long and rich history, starting from the classical uniformization theorem
for Riemann surfaces, with many significant milestones along the way.  Canonical metrics often arise as critical points
of an energy functional on some given space of metrics (e.g.,\ all metrics in a conformal class, or all metrics in a K\"ahler class, or
all metrics), and because of this, techniques from the calculus of variations have played a key role in many of these problems.
Particularly if the critical points are (local or global) minima of their associated energy functionals, gradient flow techniques 
provide another important path to finding such metrics.  Conversely, gradient flows, or modified versions of such flows, provide interesting
settings for the development of techniques to study various stability questions around canonical metrics. 

\

One source of natural energy functionals is provided by the Chern-Gauss-Bonnet theorem on a $4$-dimensional compact manifold $M$ \cite{Besse} which states that: 
\begin{equation*}
8 \pi^2 \chi(M) = \int_{M} \left(\frac{1}{4} |\W|^2 - \frac{1}{2} \left|\Ric - \frac{1}{4} \Scal g\right|^2 + \frac{1}{24} \Scal ^2 \right)\dvol_g.
\end{equation*}
Here $\W$, $\Ric - \frac{1}{4} \Scal g$ and $\Scal$ are the Weyl, traceless Ricci and scalar curvatures, and $|\W|$ is the norm of the $(0,4)$
tensor $\W$, all with respect to the metric $g$, and $\chi(M)$ is the Euler characteristic. It is well known that any quadratic curvature functional in 4 dimensions is a linear combination of the following three quantities:  $\int |\W|^2 \dvol_g $, $\int |\Ric|^2\dvol_g $, and  $\int \Scal^2\dvol_g $ 
(see e.g., \cite{Besse, GV}). One motivation for this paper comes from  the Weyl energy functional
\begin{align*}
g \longmapsto \sW(g) := \int_{M} | \W(g) |^2_g \dvol_g.
\end{align*}
Specifically in dimension $4$, this integral is unchanged if we replace $g$ by any conformally related metric.

\

The first variation of $\sW$ along a curve of metrics $g(s)$ with $g(0) = h$ and $g'(0) = v$ is given by
\begin{align*}
\left. \frac{d}{ds}  \sW( h + sv ) \right|_{s=0} = \int_{M} \inn{\nabla \sW}{ v}_h \dvol_h, 
\end{align*}
where 
\[ 
\nabla \sW = -4 \Ba(h)
\]
is a multiple of the Bach tensor of $h$, which is defined as follows: 
\begin{align} \label{eqn:def-bach}
\Ba_{ij} := {\P_{ij,k}}^k - {\P_{ik,j}}^k + \P^{kl} \W_{kijl}.
\end{align}
Here $\P:= \frac{1}{2} \left( \Ric - \frac{\Scal}{6} h\right)$ is the Schouten tensor. In this formula and throughout this manuscript, commas denote covariant 
derivatives, and the standard summation convention is used. The Bach tensor is traceless and divergence-free, and involves derivatives up to order four of 
the metric. The Schouten tensor $\P$ has a relatively simple conformal transformation rule,
and using this, the Bach tensor $\Ba$ has conformal invariance property:  
\[
g_1 := f^2 g \Longrightarrow \Ba(g_1) = f^{-2} \Ba(g).
\]
Metrics satisfying condition $\Ba = 0$ are called Bach flat; examples include all Einstein and conformally Einstein metrics. Bach flat 
metrics are expected to be somewhat easier to find than Einstein metrics. 

\

The Bach tensor defined by equation \eqref{eqn:def-bach} is extended to higher dimensions by the same formula with $P := \frac{1}{n-2} \left( \Ric - \frac{\Scal}{2(n-1)} h\right)$.  The Bach tensor loses several key properties if $n \geq 5$:
it is no longer conformally covariant, it is not the gradient of a Riemannian functional, and it is not divergence-free.  The Bach tensor 
has an analogue in higher dimensions which retains these properties \cite{Graham}. It is called the ``ambient obstruction tensor",  and it involves higher order derivatives of the metric.

\

In this paper we study a geometric flow associated to the Bach tensor.  This flow  decreases the Weyl energy functional in  dimension $4$.  Similar to the Ricci flow, this flow equation is quite degenerate
because of the underlying diffeomorphism invariance.  The first author and D.~Helliwell \cite{BH1} 
have introduced a modified system, hereafter called simply the ``Bach flow", which breaks this conformal invariance. The initial value problem for this Bach flow takes the form
\begin{equation}
\label{BFndim}
\frac{dg}{dt} = \Ba(g(t)) + \frac{1}{2(n-1)(n-2)} \Delta \Scal(g(t)) \, g(t), \; \; g(0) = g_0. 
\end{equation}
As explained in \cite{BGIM2}, stationary points of this modified flow are Bach flat metrics for which the scalar curvature is harmonic; this extra condition on $S$ provides a 
normalization within each conformal class. Below we also incorporate an analogue of the DeTurck gauge to counter the diffeomorphism invariance.  

\

Our interest in this paper is the long-time stability of the Bach flow, both at the linear and nonlinear levels, around a very special class of stationary points, namely the metrics 
with constant sectional curvature.  To this end, a significant part of the work below involves the explicit computation of the linearization of the gauge-adjusted Bach flow 
at metrics of constant sectional curvature and the establishment of bounds on the $L^2$ spectrum of the corresponding fourth order linear elliptic operator.   These spectral
bounds depend heavily on the choice of gauge fixing, and one of the important issues is to choose a gauge for which this linearization is self-adjoint. 
In our normalization, this $L^2$ spectrum is unbounded below. The flow is linearly stable if this spectrum is nonpositive, and one expects particularly nice convergence properties 
of the nonlinear flow if this spectrum is bounded above by a negative constant, or even if it is just nonpositive with a spectral gap at $0$.  If the spectrum is strictly
negative, we can employ analytic semigroup techniques as in \cite{BGIM} to conclude that the nonlinear flow converges to the constant curvature metric at an exponential
rate. Our first main result is the following: 
\begin{theorem}[Linear Stability of the Gauge-Adjusted Bach Flow at Spaces of Constant Sectional Curvature] \label{thm:main-1}
Let $(M^n, h)$ be an orientable complete Riemannian manifold with constant sectional curvature $c \in \{-1, 0, +1\}$.  There is a choice of gauge for which the 
linearization $L$ of \eqref{BFndim} at $h$ is self-adjoint, and for which the $L^2$ spectrum of $L$ is non-positive. More specifically: 
\begin{itemize}
\item If $c=0$ and $M$ is compact, the kernel of $L$ consists of the finite-dimensional space of parallel $2$-tensors. If $M$ is 
noncompact, we conclude only that the spectrum of $L$ is nonpositive, but there is no $L^2$ kernel. 
\item If $c = 1$ and $M = S^n$, the kernel of $L$ consists of the pure-trace symmetric $2$-tensors $v=f h$, where $f$ satisfies $\Delta_h f = -n f$. 
\item  If $c = -1$ and $M = \mathbb H^n$, the spectrum is bounded above by a negative constant; if $M$ is compact, however,
the kernel of $L$ consists of the symmetric $2$-tensors of the form $v = K\alpha = \delta^*_h \alpha$ where $\alpha$ is a harmonic $1$-form, along with the Codazzi-type 
tensors which are defined as tensors that satisfy $v_{ij,k} - v_{jk,i} = 0$. Here $K$ is the conformal killing operator (see \eqref{def:conformalKilling} for its definition).
\end{itemize}
In particular, if $M$ is compact, there is always a spectral gap at $0$. 
\end{theorem}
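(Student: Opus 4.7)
The plan is to first compute the linearization of the gauge-modified Bach flow \eqref{BFndim} at a constant sectional curvature metric $h$ explicitly, exploiting the massive simplifications available there, and then to show that with a judicious choice of DeTurck-type gauge the resulting fourth-order operator $L$ admits a manifestly symmetric quadratic form whose spectral properties can be read off. Because $\W(h)=0$, $\P(h)=\frac{c}{2}h$, and hence $\Ba(h)=0$ at a constant curvature metric, the term $\P^{kl}\W_{kijl}$ in \eqref{eqn:def-bach} does not contribute to the first variation, and only the covariant derivatives ${\P_{ij,k}}^k$ and ${\P_{ik,j}}^k$, together with the trace term $\frac{1}{2(n-1)(n-2)}\Delta\Scal\cdot h$, enter nontrivially. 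Linearizing these using the standard formulas for the variation of the Ricci, scalar, and Schouten tensors at $h$ reduces $L$ on $v:=g'(0)$ to $\Delta^2 v$ plus explicit curvature-coupled lower order terms whose coefficients depend only on $c$ and $n$.

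To compensate for the diffeomorphism invariance inherited from $\sW$, I would add a Lie derivative $\mathcal L_{W(g,h)}g$ with a second-order Bianchi-type vector field $W$, possibly supplemented by an additional tensorial correction that eliminates antisymmetric contributions appearing at this order. In dimension $4$ the self-adjointness of the resulting $L$ is essentially automatic, since $\Ba$ is the $L^2$ gradient of $\sW$ and the gauge term is formally self-adjoint after integration by parts. In higher dimensions, where $\Ba$ is not variational, the hope is that the additional scalar term present in \eqref{BFndim} precisely cancels the non-symmetric contribution produced by the failure of $\Ba$ to be a gradient, so that the same Bianchi-type gauge continues to produce a symmetric operator.

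The heart of the proof is the spectral estimate. Writing $(Lv,v)_{L^2}$ and integrating by parts repeatedly, I would establish a higher-order generalization of the Koiso identity, expressing $(Lv,v)$ as a sum of manifestly non-positive squared norms of second-order differential expressions in $v$ plus curvature-coupled lower-order corrections. For $c=0$, the curvature contributions vanish and $(Lv,v)\leq 0$ follows at once, with equality requiring $\nabla\nabla v=0$, whence $v$ is parallel in the compact case and no $L^2$ kernel exists in the noncompact case. For $c=+1$ on $S^n$, the corrections have a definite sign on the standard Berger--Ebin decomposition of symmetric $2$-tensors into transverse-traceless, conformal Killing, and pure trace parts, and equality is pinned down to $v=fh$ with $\Delta_h f=-nf$, matching the conformal directions on the round sphere. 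For $c=-1$, the curvature contributions work in our favor and combine with the spectral bottom of the Bochner Laplacian on $\mathbb H^n$ to yield a uniform spectral gap; in the compact quotient, the identity collapses precisely on tensors of the form $\delta^*_h\alpha$ with $\alpha$ harmonic (forced by the gauge condition) together with the Codazzi-type tensors.

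The main obstacle is the higher-order Koiso identity itself: getting the coefficients sharp enough to conclude both non-positivity and the exact characterization of the kernel requires a careful commutation scheme, using constant curvature to commute covariant derivatives at the cost of easily computable curvature terms, and meticulous bookkeeping of all trace, divergence, and gradient contractions of $v$ at each order of differentiation. Once that identity is in place and paired with a gauge making $L$ self-adjoint, the case-by-case analysis of the kernel reduces to standard Bochner and Hodge-theoretic arguments on each of the three model geometries.
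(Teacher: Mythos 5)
Your proposal correctly identifies the overall architecture of the argument: linearize at $h$, choose a Bianchi-type gauge tuned so that $L$ is self-adjoint, split according to $\mathrm{im}(K)\oplus\{fh\}\oplus\mathrm{TT}$, and reduce to a higher-order Koiso-type identity for the TT part. But two of the claims you make along the way are wrong, and the one you flag as ``the main obstacle'' is left entirely open; that obstacle is where the real content lies.

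First, the self-adjointness claim. You assert that in dimension $4$ self-adjointness of $L$ is ``essentially automatic'' because $\Ba$ is the $L^2$ gradient of $\sW$ and the gauge term is formally self-adjoint after integration by parts. This is not correct: the gauge term $2\delta^*_g\omega$ is not self-adjoint (already in the Ricci--DeTurck setting the corresponding adjustment destroys symmetry), and the composite operator $L$ in the paper is not self-adjoint for generic gauge choices. The asymmetry is visible in the term $c\,\vdudu{p}{m}{m}{p}\,h_{ij}$ appearing in $L$ as written in Theorem \ref{thm:Lin}; it survives in every dimension, including $n=4$. Self-adjointness is only achieved by the specific choices $\mu = -c(n-1)/2$ and $\nu = -c/4$ of the lower-order gauge parameters, whose effect is to ensure both that $L$ preserves each summand of the splitting and that $L(fh)=L^*(fh)$. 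Your ``hope'' for $n\ge 5$ (that the scalar term in \eqref{BFndim} supplies the fix) is likewise not what happens: that term cancels the Hessian-of-$\Scal$ piece, but it is the tuned $\mu,\nu$ that deliver self-adjointness.

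Second, the hyperbolic case. You describe the curvature contributions as ``working in our favor,'' but they do the opposite: after integration by parts one lands at
\[
(\ttv,L\ttv)=-\tfrac12\|\nabla^2 \ttv\|^2+\tfrac{2n+1}{2}\|\nabla \ttv\|^2+n\|\ttv\|^2,
\]
whose lower-order terms have the \emph{wrong} sign. The entire point of the Koiso-type argument is to defeat this. The identity you need is not merely ``a higher-order Koiso identity in spirit'' but the concrete inequality $\|A\|^2\ge(n-1)\|T\|^2$ for $T_{ijk}=v_{ij,k}-v_{jk,i}$ and $A=\nabla T$, proved by interpreting $T$ as a $2$-form with values in $T^*M$ and applying a Bochner--Weitzenb\"ock formula for the twisted Laplacian $\Delta^\nabla=-\bigl(d^\nabla(d^\nabla)^*+(d^\nabla)^*d^\nabla\bigr)$. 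This yields
\[
(\ttv,L\ttv)=-\tfrac18\bigl(\|d^\nabla T\|^2+\|(d^\nabla)^*T\|^2\bigr)-\tfrac{n-2}{4}\|T\|^2,
\]
which is the nonpositive sum of squares you anticipate, but it does not follow from ``standard Bochner and Hodge-theoretic arguments on each model geometry''; it is a genuinely new mechanism specific to this problem. Similarly, the estimate of $(K\alpha,LK\alpha)$ is not a routine Bochner computation: one must convert $\nabla^*\nabla$ to the Hodge Laplacian via \eqref{Weiz}, use the commutator $[K,\nabla^*\nabla]=c(n+1)K$, and carry out a long term-by-term expansion to arrive at a formula in $\|dd^*d\alpha\|^2$, $\|d^*dd^*\alpha\|^2$, $\|\Delta_H\alpha\|^2$, $\|d\alpha\|^2$, $\|d^*\alpha\|^2$. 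Without either of those two concrete computations, the claims about the sign of $(v,Lv)$ in the hyperbolic case, and about the exact kernel, are not established.

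Your $c=0$ and $c=+1$ analyses, and your identification of the kernel elements (harmonic-form contributions $K\alpha$ and Codazzi tensors with $T=0$), match the paper and are fine modulo having the two ingredients above. But as it stands, the proposal names the right targets and then stops short exactly where the paper's argument actually begins.
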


The choice of the gauge in Theorem \ref{thm:main-1} is different from the one proposed in \cite{BH1}; indeed, that one does not lead to a 
self-adjoint linearization.  The proof of Theorem \ref{thm:main-1} involves a careful estimation of $\langle Lv, v\rangle$. A decomposition of symmetric
$2$-tensors plays a central role. Similar to the linearization of the Einstein equation $\Ric(g) + (n-1) g = 0$ at a hyperbolic metric $h$, a special Bochner type formula 
is required to obtain the correct sign for this quantity if $v$ is tranverse-traceless. In the Einstein case, this Bochner formula is due to Koiso \cite{Koiso}.  The 
high-order analogue of the Koiso-Bochner formula proved here is new, of independent interest and likely to be useful in other geometric settings.

\

We next consider the nonlinear stability of the Bach flow at a constant curvature metric.   For reasons to be explained below, we focus solely on the special case of the nonlinear stability of hyperbolic space itself, and then deduce the analogous nonlinear stability result for Poincar\'e-Einstein metrics close to the hyperbolic using
perturbative methods.  For hyperbolic space there are no $L^2$ harmonic $1$-forms and no $L^2$ Codazzi-type tensors. We can thus prove that 
if the initial metric $g_0$ is sufficiently close to the hyperbolic metric (in the weighted little H\"older space ${\mathcal C^{k,\alpha}_{\mu}}$ defined in \eqref{wghtd norm def} in \S \ref{sec:dynamical}), 
then the corresponding solution of the nonlinear flow converges at an exponential rate to the hyperbolic metric.  Since this convergence result requires that $g_0-h$ decay at infinity,
this flow leaves the conformal infinity unchanged. 

\ 

\begin{theorem}[Nonlinear Stability of the Gauge-Adjusted Bach Flow at Hyperbolic Space]  \label{thm:main-2}  Let $(M,h)$ be $\mathbb H^n$ with its standard hyperbolic metric. For any $k\geq 4$, $0<\alpha<1$, and positive 
$\mu \in (\frac{n-1}{2} - r(n), \frac{n-1}{2}+r(n))$, where $r(n) > 0$ is a  constant depending on $n$, there exists $\vep > 0$ such that if  
\begin{equation*}
\| g_0 - h \|_{\mathcal C^{k,\alpha}_{\mu}} < \vep,
\end{equation*}
then the gauge-adjusted Bach flow $g_0(t)$ starting at $g_0(0) = g_0$ exists for all time and converges at an exponential rate to $h$ in the $\mathcal C^{k,\alpha}_{\mu}$ norm,
\begin{equation}\label{eqn:exp-decay}
\| g_0 (t) - h \|_{\mathcal C^{k,\alpha}_{\mu}} \leq C e^{-a(n) t}  \| g_0 - h \|_{ \mathcal C^{k,\alpha}_{\mu}},
\end{equation}
where $a(n)>0$ is a constant.
\end{theorem}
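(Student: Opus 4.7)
The plan is to view the gauge-adjusted Bach flow as a quasilinear parabolic equation near $h$ and reduce the statement to a standard application of analytic semigroup theory on a suitable Banach space of tensors. Writing $g(t) = h + v(t)$ with $v$ a symmetric $2$-tensor, I would substitute into \eqref{BFndim} (with the gauge term of Theorem \ref{thm:main-1} added) and Taylor expand each coefficient about $h$. This yields an evolution equation of the form $\partial_t v = Lv + Q(v)$, where $L$ is precisely the fourth-order self-adjoint operator from Theorem \ref{thm:main-1} and $Q(v)$ is a nonlinearity that vanishes to second order at $v=0$, polynomial in $v$ and its covariant derivatives up to order four with coefficients built from the hyperbolic metric. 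The crucial structural point I would verify is that $Q(v)$ maps $\mathcal C^{k,\alpha}_{\mu}$ Lipschitz continuously into $\mathcal C^{k-4,\alpha}_{\mu}$ with Lipschitz constant going to zero as $\|v\|_{\mathcal C^{k,\alpha}_{\mu}} \to 0$; because $Q$ is at least quadratic in $v$ and its derivatives, the weights add, and the contribution of a quadratic nonlinearity in a weight-$\mu$ space actually lies in a weight-$2\mu$ space, which is better than required.

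The next step is to show that $-L$ generates an analytic semigroup on $\mathcal C^{k,\alpha}_\mu$ with spectrum bounded above by a strictly negative constant $-a(n)$. Since $L$ is a fourth-order uniformly degenerate (in the sense of Mazzeo's $0$-calculus) elliptic operator on the asymptotically hyperbolic manifold $(\mathbb H^n, h)$, the analytic semigroup generation in weighted little H\"older spaces follows the template already used in \cite{BGIM}, provided the weight $\mu$ avoids the indicial roots of $L$ at infinity. One then has to identify the allowed weight interval and transfer the spectral bound from Theorem \ref{thm:main-1}. At the critical weight $\mu = (n-1)/2$, the weighted $L^2$ norm is equivalent to the unweighted one, so the $L^2$ spectrum of $L$ on $\mathbb H^n$ coincides with the spectrum at this weight and is bounded above by $-a(n) < 0$ by Theorem \ref{thm:main-1}. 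A standard Fredholm-perturbation argument for uniformly degenerate operators then shows that the spectrum remains bounded above by some (possibly smaller) negative constant for every $\mu$ in an open neighborhood of $(n-1)/2$; the radius of this neighborhood, which is computed from the indicial roots of $L$, is the constant $r(n)$ in the statement.

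With these two ingredients in place, the remainder of the proof is a textbook semigroup argument. I would set up the mild solution formulation
\begin{equation*}
v(t) = e^{tL} v(0) + \int_0^t e^{(t-s)L} Q(v(s))\, ds,
\end{equation*}
and work in the Banach space of continuous functions $v : [0,\infty) \to \mathcal C^{k,\alpha}_\mu$ with finite norm $\sup_{t\geq 0} e^{a't}\|v(t)\|_{\mathcal C^{k,\alpha}_\mu}$ for some $0 < a' < a(n)$. Using the analytic semigroup estimates $\|e^{tL}\|_{\mathcal C^{k,\alpha}_\mu \to \mathcal C^{k,\alpha}_\mu} \leq C e^{-a(n) t}$ together with the smoothing estimate from $\mathcal C^{k-4,\alpha}_\mu$ into $\mathcal C^{k,\alpha}_\mu$ at cost $t^{-1}$, the Lipschitz bound on $Q$ proved in the first paragraph makes the map $v \mapsto e^{tL}v(0) + \int_0^t e^{(t-s)L} Q(v(s))\,ds$ a contraction on a small ball once $\|v(0)\|_{\mathcal C^{k,\alpha}_\mu} < \vep$. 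The unique fixed point is the desired global solution and automatically satisfies the exponential decay bound \eqref{eqn:exp-decay}.

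The main technical obstacle is the middle step: establishing both the analytic semigroup property on the weighted little H\"older spaces and the strict negativity of the resulting spectrum at weights $\mu \neq (n-1)/2$. This requires a careful indicial-root computation for the fourth-order operator $L$ on $\mathbb H^n$ and a mapping-property analysis for uniformly degenerate operators in the spirit of Mazzeo's $0$-calculus, together with a perturbative argument to move the weight off the $L^2$-critical value without losing the spectral gap guaranteed by Theorem \ref{thm:main-1}. The Lipschitz estimate on the nonlinearity and the semigroup fixed-point contraction, by contrast, are routine once this spectral and semigroup framework is in hand.
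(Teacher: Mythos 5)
Your overall framework --- linearize $F(h+v) = Lv + Q_h(v)$ with $Q_h$ vanishing quadratically, control the nonlinearity on the weighted little H\"older scale, establish sectoriality and a negative spectral bound for $L$ on $\mathcal C^{k,\alpha}_\mu$, and close with a mild-solution fixed-point argument --- is precisely the paper's, which simply outsources the final fixed-point step to \cite[Proposition 9.1.2]{Lunardi} rather than writing out the contraction.

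The genuine gap is in your middle step. You assert that at $\mu = (n-1)/2$ ``the $L^2$ spectrum of $L$ coincides with the spectrum at this weight'' and then invoke a ``Fredholm-perturbation argument'' for nearby weights. Neither claim is correct as stated: $L^2$ and $\mathcal C^{k,\alpha}_{(n-1)/2}$ are different Banach spaces whose spectra of $L$ have no a priori reason to coincide, and moving the weight $\mu$ changes the function space rather than perturbing the operator, so it is not a Fredholm perturbation. What Proposition \ref{prop:bdresolvent} actually does is use the $L^2$ resolvent set only to guarantee the \emph{existence} of $R_\lambda(L) = (\lambda - L)^{-1}$ as an element of Mazzeo's $0$-calculus; it then reads off from the indicial roots (Appendix \ref{sec:appendix}) the decay rate $\rho^{(n-1)/2+\sigma}$ of the Schwartz kernel of $R_\lambda(L)$ at the left and right boundary faces of the $0$-double space, and this boundary decay of the kernel simultaneously yields boundedness of $R_\lambda(L)$ on $\mathcal C^{k,\alpha}_\mu$ for every $\mu \in \left(\frac{n-1}{2} - \sigma, \frac{n-1}{2} + \sigma\right)$. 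The admissible interval of weights is governed by the polyhomogeneous expansion of the resolvent kernel at the boundary, not by a weight-by-weight perturbation. You do correctly flag the indicial roots and the $0$-calculus as the required tools, so the missing piece is the mechanism rather than a new concept: the weighted-H\"older mapping properties come from the structure of the resolvent kernel, and only the existence of that kernel --- not a spectral coincidence --- is supplied by the $L^2$ estimate of Theorem \ref{thm:main-1}.
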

Using this result, we can then compensate for the gauge fixing using a family of diffeomorphisms and conclude the following:
\begin{cor}[Nonlinear Stability of the Bach Flow at Hyperbolic Space]  \label{cor:main-2}  If $g_0$ is a metric sufficiently close in $\mathcal C^{k,\alpha}_\mu$ norm to the hyperbolic metric $h$ on $\mathbb H^n$
for some positive $\mu \in (\frac{n-1}{2} -r(n), \frac{n-1}{2}+r(n))$, then the solution of the Bach flow $g_0(t)$ starting at $g_0$ exists 
for all time and converges at an exponential rate in $\mathcal C^{k,\alpha}_\mu$ to some pullback $F^*h$.
\end{cor}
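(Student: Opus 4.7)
The plan is to leverage Theorem \ref{thm:main-2} via the standard DeTurck-type mechanism that relates the gauge-adjusted flow to the original Bach flow \eqref{BFndim} through a time-dependent family of diffeomorphisms. Denote by $W_t := W(\tilde g(t), h)$ the gauge-correcting vector field whose Lie derivative supplies the DeTurck adjustment (so that $\mathcal{L}_{W_t} \tilde g(t)$ is precisely the gauge term), and let $\tilde g(t)$ be the solution of the gauge-adjusted flow furnished by Theorem \ref{thm:main-2}. I would then define $\varphi_t$ by solving the ODE
\begin{equation*}
\frac{\partial \varphi_t}{\partial t}(x) = -W_t(\varphi_t(x)), \qquad \varphi_0 = \mathrm{id}.
\end{equation*}
A direct computation then shows that $g(t) := \varphi_t^* \tilde g(t)$ solves \eqref{BFndim} with $g(0) = g_0$.

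Next, I would prove that $\varphi_t$ converges to a limit map $F$ as $t \to \infty$. The key observation is that $W_t$ is built from Christoffel differences between $\tilde g(t)$ and $h$ together with derivatives of $\Scal(\tilde g(t))$, all of which vanish when $\tilde g(t) = h$. Schematically $W_t = \mathcal{Q}(\tilde g(t) - h)$ for some nonlinear differential operator $\mathcal{Q}$ vanishing at $0$, so the exponential decay \eqref{eqn:exp-decay} yields
\begin{equation*}
\|W_t\|_{\mathcal{C}^{k-1,\alpha}_{\mu}} \leq C e^{-a(n) t} \|g_0 - h\|_{\mathcal{C}^{k,\alpha}_{\mu}}
\end{equation*}
for all $t \geq 0$. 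Integrating this bound in time shows that $\varphi_t$ is Cauchy in a suitable weighted topology, hence converges to a limit map $F$. Since $\mu > 0$, the vector field $W_t$ decays at spatial infinity, so each $\varphi_t$ and thus the limit $F$ acts trivially on the conformal boundary and yields a well-defined diffeomorphism of $\mathbb H^n$.

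Finally, I would obtain the claimed exponential convergence of $g(t)$ to $F^* h$ in $\mathcal{C}^{k,\alpha}_\mu$ from the decomposition
\begin{equation*}
g(t) - F^* h = \varphi_t^* (\tilde g(t) - h) + (\varphi_t^* h - F^* h).
\end{equation*}
The first term decays exponentially in $\mathcal{C}^{k,\alpha}_\mu$ by combining Theorem \ref{thm:main-2} with uniform bounds on the action of $\varphi_t^*$ on weighted Hölder norms, and the second decays exponentially by the convergence $\varphi_t \to F$ via the chain rule. The hard part will be verifying that pullback by $\varphi_t$ is a uniformly bounded operation on $\mathcal{C}^{k,\alpha}_\mu$ and that $F$ is a genuine diffeomorphism preserving the asymptotically hyperbolic structure, rather than merely a continuous self-map. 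This requires controlling both $\varphi_t$ and its inverse in a weighted $\mathcal{C}^{k+1,\alpha}$ sense; because $W_t$ decays exponentially in time while respecting the asymptotic geometry, standard ODE estimates carried out in the conformal compactification picture should furnish the required uniform bounds and complete the proof.
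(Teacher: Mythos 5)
Your proposal is correct and follows essentially the same route as the paper: identify the gauge vector field, use Theorem \ref{thm:main-2} to show it decays exponentially in time and spatially, integrate the resulting time-dependent vector field to obtain a limiting diffeomorphism $F$, and then pull back $\tilde g(t)$ to recover the Bach flow solution converging to $F^*h$. One minor slip: since the gauge term in \eqref{eqn:Z-final-gauge} involves $\Delta_h \beta_h(g)$, which is third order in $g$, the vector field lands in $\mathcal{C}^{k-3,\alpha}_\mu$, not $\mathcal{C}^{k-1,\alpha}_\mu$, though this does not affect the argument.
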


\

We show in Appendix \ref{sec:appendix} that $r(n) =\frac{n-1}{8}$ suffices, and that the associated rate of convergence is $a(n)= \frac{(n-2)(3n-11)(5n-13)}{128}$ (see Remark \ref{rem:ind-poly}).  However, these constants are definitely not sharp.

\

Theorem \ref{thm:main-2} directly implies dynamical stability of Poincar\'e-Einstein spaces sufficiently close to hyperbolic space. We describe this condition 
more precisely in \S \ref{sec:spec bds PE}, but the result is the following: 

\begin{theorem}[Nonlinear Stability of the Gauge-Adjusted Bach Flow at Poincar\'e-Einstein spaces]
\label{thm: main P-E} 
For any $k\geq 4$, $0 < \alpha < 1$ and $\mu \in (\frac{n-1}{2} - r(n), \frac{n-1}{2}+r(n))$, where $r(n)>0$ is a constant depending on $n$, there exists $\vep > 0$ such that if  
$\hat{g}$ is a Poincar\'e-Einstein metric sufficiently close to $h$, 
and if
\begin{equation*}
\| g_0 - \hat{g} \|_{\mathcal C^{k,\alpha}_{\mu}} < \vep,
\end{equation*}
then the gauge-adjusted Bach flow $g_0(t)$ satisfying $g_0(0) = g_0$ exists for all time and converges to $\hat{g}$ in the $\mathcal C^{k,\alpha}_{\mu}$ norm; indeed, there exist constants $C>0$ and $a(n)>0$
such that 
\begin{equation} \label{eqn:exp-decay-pe}
\| g_0 (t) - \hat{g} \|_{\mathcal C^{k,\alpha}_{\mu}} \leq C e^{-a(n) t} \| g_0 - \hat{g} \|_{\mathcal C^{k,\alpha}_{\mu}}.
\end{equation}
\end{theorem}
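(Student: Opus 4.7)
The plan is to obtain Theorem \ref{thm: main P-E} by regarding it as a perturbation of Theorem \ref{thm:main-2}. The starting point is that any Poincar\'e--Einstein metric $\hat g$ (with the normalization $\Ric(\hat g) = -(n-1)\hat g$) is Einstein, so $\Ba(\hat g) = 0$ and $\Scal(\hat g)$ is constant; hence $\hat g$ is a stationary solution of \eqref{BFndim}. The first step is therefore to reformulate the gauge-adjusted Bach flow using $\hat g$ as the background metric (rather than $h$) and to linearize at $\hat g$, producing a fourth-order operator $L_{\hat g}$ analogous to $L = L_h$.

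The second step is to transfer the spectral bound from $L_h$ to $L_{\hat g}$. Since $\hat g$ is $\mathcal{C}^{k,\alpha}_\mu$-close to $h$, the difference $L_{\hat g} - L_h$ is a fourth-order differential operator whose coefficients (written relative to the background $h$) are small in the appropriate weighted Hölder norm. The spectral bound for $L_h$ established in \S \ref{sec:spec bds PE} (the input to Theorem \ref{thm:main-2}) says that $L_h$ generates an analytic semigroup on $\mathcal C^{k,\alpha}_\mu$ with spectrum lying strictly to the left of $-a(n)<0$ for $\mu$ in the indicated Fredholm window. Standard perturbation theory for sectorial operators (Kato--Rellich together with stability of Fredholm indices under small relatively bounded perturbations) then implies that for $\hat g$ sufficiently close to $h$, the operator $L_{\hat g}$ is also sectorial on $\mathcal C^{k,\alpha}_\mu$ and its spectrum lies to the left of some $-a'(n)<0$, with $a'(n)$ arbitrarily close to $a(n)$.

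The third step is to close the nonlinear argument exactly as in the proof of Theorem \ref{thm:main-2}. Writing $g_0(t) = \hat g + w(t)$, the gauge-adjusted Bach flow becomes a quasilinear evolution equation of the form
\begin{equation*}
\partial_t w = L_{\hat g} w + Q_{\hat g}(w),
\end{equation*}
where $Q_{\hat g}(w)$ collects all terms at least quadratic in $w$ and its derivatives up through order four. The nonlinearity $Q_{\hat g}$ satisfies the same type of local Lipschitz estimate on balls in $\mathcal C^{k,\alpha}_\mu$ as its counterpart in the proof of Theorem \ref{thm:main-2} (the coefficients depend smoothly on $\hat g$, which is itself controlled). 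The maximal-regularity / analytic semigroup scheme of \cite{BGIM} then yields long-time existence and the exponential decay \eqref{eqn:exp-decay-pe} provided $\|g_0-\hat g\|_{\mathcal C^{k,\alpha}_\mu}$ is small enough.

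The main obstacle is the second step: verifying that the spectral gap of $L_h$ on the weighted little-Hölder space genuinely survives under perturbation to $L_{\hat g}$. Two subtleties arise. First, the weighted norms $\mathcal{C}^{k,\alpha}_\mu$ are defined relative to a fixed background geometry, and one must check that the norms induced by $h$ and by $\hat g$ are uniformly equivalent for $\hat g$ near $h$, so that the spectral statement is genuinely coordinate-free. Second, $L_{\hat g}$ need not be self-adjoint in the same Hilbert space sense as $L_h$, so the spectral bound has to be argued through resolvent estimates and the structure of the indicial roots at conformal infinity (which depend only on the asymptotic Einstein structure and hence are the same for $h$ and $\hat g$). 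Once these indicial roots are shown to coincide and the relatively bounded perturbation theory is applied in the correct Fredholm window $\mu \in (\tfrac{n-1}{2}-r(n),\tfrac{n-1}{2}+r(n))$, the remaining pieces are routine.
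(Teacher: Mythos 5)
Your proposal is essentially correct and follows the same route the paper takes: linearize at $\hat g$, transfer the spectral bound from $L_h$ to $L_{\hat g}$ by perturbation, then close the nonlinear argument with the analytic semigroup/Lunardi machinery exactly as in Theorem \ref{thm:main-2}. The one wrinkle is your initial framing of step two via ``Kato--Rellich together with stability of Fredholm indices.'' Kato--Rellich is a Hilbert-space tool for self-adjoint operators and is not the right device on the weighted little H\"older spaces $\mathcal C^{k,\alpha}_\mu$; abstract relatively-bounded perturbation theory for sectorial operators would also be delicate to apply directly here since the perturbation $L_{\hat g}-L_h$ is itself a full fourth-order operator. You do, however, self-correct in your last paragraph, where you invoke resolvent estimates and the stability of the indicial roots at conformal infinity. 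That is exactly the mechanism the paper uses (Corollaries \ref{cor1} and \ref{cor2}): the resolvent kernel lies in the $0$-calculus of \cite{Ma-edge} and depends continuously (even smoothly) on the metric, while the indicial roots computed in Appendix \ref{sec:appendix} are determined only by the leading asymptotic structure and hence are identical for $h$ and $\hat g$; consequently the resolvent continues to exist for $\Re\lambda > -a(n)'$ and enjoys the same boundedness on $\mathcal C^{k,\alpha}_\mu$ for $\mu$ in the stated Fredholm window. Your two flagged subtleties --- equivalence of the $\hat g$- and $h$-weighted H\"older norms, and the need to argue through resolvent structure rather than self-adjointness --- are precisely the points the paper addresses, so modulo substituting the $0$-calculus resolvent argument for the Kato--Rellich allusion, the proposal matches the intended proof.
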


\

This implies the nonlinear stability of the ungauged Bach flow in the usual way:

\

\begin{cor}[Nonlinear Stability of the Bach Flow at Poincar\'e-Einstein spaces] 
\label{cor:main P-E}
If $\hat{g}$ is a Poincar\'e-Einstein metric that is sufficiently close to $h$, and if \[\|g_0 - \hat{g} \|_{\mathcal C^{k,\alpha}_{\mu}} < \vep,\]
then the solution of the Bach flow $g_0(t)$ starting at $g_0$ exists for all time and converges at an exponential rate in $\mathcal C^{k,\alpha}_\mu$ to $\hat{g}$. 
\end{cor}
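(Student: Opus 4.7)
The plan is to deduce this corollary from Theorem~\ref{thm: main P-E} via a standard DeTurck-type diffeomorphism argument, exactly parallel to how Corollary~\ref{cor:main-2} is derived from Theorem~\ref{thm:main-2}. Roughly, the gauge-adjusted Bach flow differs from the ungauged Bach flow~\eqref{BFndim} by a Lie derivative term, and this discrepancy can be absorbed by pulling back along a time-dependent family of diffeomorphisms.

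First I would apply Theorem~\ref{thm: main P-E} to obtain a solution $\tilde g(t)$ of the gauge-adjusted Bach flow with $\tilde g(0)=g_0$ satisfying the exponential decay \eqref{eqn:exp-decay-pe}. Denote by $X(\tilde g(t),\hat g)$ the DeTurck-type vector field whose Lie derivative distinguishes the two flows. I would then define a one-parameter family of diffeomorphisms $\Phi_t$ of $M$ by integrating the time-dependent ODE
\begin{equation*}
\frac{d\Phi_t(p)}{dt} = X\bigl(\tilde g(t),\hat g\bigr)\bigl(\Phi_t(p)\bigr), \qquad \Phi_0 = \operatorname{id},
\end{equation*}
and verify by direct computation that $g(t) := \Phi_t^*\tilde g(t)$ solves the ungauged flow \eqref{BFndim} with $g(0)=g_0$.

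The next step is to prove that $\Phi_t$ converges as $t\to\infty$ to a limit diffeomorphism $\Phi_\infty$. Since $X(\hat g,\hat g)=0$ and $X$ depends smoothly on its first metric argument, the estimate \eqref{eqn:exp-decay-pe} yields
\begin{equation*}
\| X(\tilde g(t),\hat g)\|_{\mathcal C^{k-1,\alpha}_\mu} \leq C e^{-a(n)t}\|g_0-\hat g\|_{\mathcal C^{k,\alpha}_\mu},
\end{equation*}
which is integrable in $t$. This produces a unique limit $\Phi_\infty$, with $\Phi_t \to \Phi_\infty$ at an exponential rate in a suitable topology. Consequently $g(t)=\Phi_t^*\tilde g(t)\to\Phi_\infty^*\hat g$ exponentially in $\mathcal C^{k,\alpha}_\mu$, and $\Phi_\infty^*\hat g$ is again a Poincar\'e-Einstein metric close to $\hat g$.

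The main obstacle will be the analytic issue of controlling $\Phi_t$ at the conformal infinity of $M$: one must verify that the integral curves of $X(\tilde g(t),\hat g)$ remain well-defined up to the boundary at infinity and that pullback by $\Phi_t$ preserves the weighted little H\"older spaces $\mathcal C^{k,\alpha}_\mu$. The positivity of $\mu$ ensured by Theorem~\ref{thm: main P-E} is essential here, as it forces $X$ to decay rapidly enough at infinity for $\Phi_t$ to remain admissible on these spaces and for the composition $\Phi_t^*\tilde g(t)$ to lie in the same functional class as $\tilde g(t)$. Once this is handled, the stated exponential convergence of the ungauged flow follows from the triangle inequality applied to the convergences $\tilde g(t)\to\hat g$ and $\Phi_t\to\Phi_\infty$.
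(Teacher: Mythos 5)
Your argument follows the paper's approach exactly: the paper indicates that Corollary~\ref{cor:main P-E} "follows immediately," meaning by the same de-gauging argument given explicitly in the proof of Corollary~\ref{cor:main-2} — integrate the (exponentially decaying) gauge vector field, obtain a converging family of diffeomorphisms $\Phi_t \to \Phi_\infty$, and pull back. Your verification that the gauge field decays in $t$ and lies in a weighted H\"older space (so that $\Phi_t$ remains admissible up to the conformal boundary) is exactly the point the paper addresses for Corollary~\ref{cor:main-2} and implicitly reuses here.

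One small remark: your proof correctly yields convergence of the ungauged flow to $\Phi_\infty^*\hat g$, which is a diffeomorphic copy of $\hat g$ rather than $\hat g$ itself. This matches the paper's statement of Corollary~\ref{cor:main-2} (which allows "some pullback $F^*h$"), but it is slightly at odds with the literal wording of Corollary~\ref{cor:main P-E}, which asserts convergence to $\hat g$; the paper's phrasing there appears to be an abbreviation, and your more careful conclusion is the honest one. You should also take care that the gauge vector field in the P-E setting must be built from $\hat g$ rather than $h$ (so that $X(\hat g,\hat g)=0$, as you assume), which is a mild but necessary modification of the formula \eqref{eqn:Z-final-gauge}.
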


Theorem \ref{thm:main-2} addresses nonlinear stability in only one of the several cases discussed in the linear stability result, Theorem \ref{thm:main-1}.  Further nonlinear stability
results addressing those other cases will be handled in a sequel to this paper.  There are two somewhat distinct reasons for this. First, using techniques 
due to Koch and Lamm \cite{KL}, it is possible to show that the nonlinear (gauge fixed) flow exists for all time if $M$ is any complete 
noncompact flat manifold.  However, those techniques do not lead immediately to a convergence result, and in any case are of a very different flavor 
than the results in the present paper.  On the other hand, if $M$ is compact and $L$ has a nontrivial kernel (this occurs in all three cases; for $c = 1, 0,$ and $-1$), one expects the flow to have a center manifold, and one must show that the flow converges to some possibly different Bach flat metric 
contained in this center manifold. The techniques to do so are again quite different than the ones that we use here.   Both of these issues will be handled in this sequel.  
The papers \cite{BGIM2, BGIM} already contain relevant information and results about these nonlinear flows. 

\

We begin in \S \ref{sec:background} by discussing properties of the Bach tensor, outlining our conventions and stating variational formul\ae\
that are used below. Also in \S \ref{sec:background} we recall a standard decomposition of the space of symmetric $2$-tensors. 
We then define the gauge-modified Bach flow. The gauge term in that flow has parameters that we show can be chosen so that the linearization is self-adjoint. We then compute, in \S \ref{sec:linearize},
this self-adjoint linearization at metrics of constant sectional curvature, which are stationary points for the flow. In \S \ref{sec:l2spec} we estimate the $L^2$ spectrum for the linearized flow at these constant curvature metrics.  The proof of Theorem \ref{thm:main-1} is then split over the proof of Theorems \ref{thm:main1a}, \ref{thm:main1b}, \ref{thm:main1c} and \ref{thm:main1nc}.  In
\S \ref{sec:dynamical} we prove  Theorem \ref{thm:main-2}. The stated results in Theorem \ref{thm: main P-E} and Corollaries \ref{cor:main-2} and \ref{cor:main P-E} follow immediately. Appendix \ref{sec:appendix} contains
the calculations of the indicial roots of this linearization at the boundary of hyperbolic space; Appendix \ref{sec:techappendix} presents 
derivations of a few additional technical formul\ae.

\subsection{Acknowledgments}
This work was supported by  grants from the Simons Foundation (Simons Collaboration Grant \#426628, E. Bahuaud, and an AMS-Simons Research Enhancement Grant for PUI Faculty, C. Guenther).

\section{Background} \label{sec:background}

\subsection{Conventions} \label{sec:conventions}
In this section we gather the conventions used throughout the paper. First, $(M^n,h)$ is always a smooth orientable Riemannian manifold of constant sectional
curvature $c \in \{-1, 0, +1\}$.  The sign conventions for the curvature and the various linear operators below are consistent with those in the book \cite{CLN},
but the reader should beware that these are at odds with other standard conventions.  In particular, the Hodge Laplacians appearing below have nonpositive
spectrum. 

\

The full Riemannian, Ricci and Scalar curvature tensors of the constant curvature metric $h$ are 
\begin{align} \label{eqn:constcurvature}
\riemdddd{i}{j}{k}{l} &= c ( h_{il} h_{jk} - h_{ik} h_{jl} ), \; \; \;  \\
\Rc_{jk} &= h^{il}\riemdddd{i}{j}{k}{l} = c (n -1 ) h_{jk},\\
\Scal &= h^{jk} \Rc_{jk} = c n (n-1).
\end{align}
For constant curvature metrics, the Schouten tensor reduces to the much simpler expression
\begin{align} \label{eqn:schouten const curv}
\P_{ij} = \frac{1}{n-2} \left( \Rc_{ij} - \frac{\Scal}{2(n-1)} g_{ij}\right) = \frac{c}{2} h_{ij}.
\end{align}
and likewise the Weyl curvature 
\begin{align*}
\W_{ijkl} := \riem_{ijkl} - ( \P \owedge g)_{ijkl},
\end{align*}
vanishes if $g = h$.  Here $\owedge$ is the Kulkarni-Nomizu product (if $a$ and $b$ are symmetric $2$-tensors, then 
$(a \owedge b)_{ijkl} := a_{il} b_{jk} + a_{jk} b_{il} - a_{ik} b_{jl} - a_{jl} b_{ik}$). 

\ 

We use the convention for the scalar Laplacian
\begin{align*}
\Delta^g f := g^{jk} f_{,jk},
\end{align*}
with a similar sign choice for Laplacians on other tensor bundles, including the bundles of symmetric $2$-tensors $\Sigma^2( T^*M )$, of traceless symmetric $2$-tensors 
$\Sigma^2_0(T^* M)$, and of $p$-forms $\Lambda^p( T^*M)$.  The space of sections of a bundle $E$ is denoted by $\Gamma(E)$, and in particular,
$\Gamma(\Lambda^p(T^*M)) = \Omega^p(M)$. 
The divergence $\delta$ and its adjoint $\delta^*$, 
\begin{align*}
\delta : \, & \Gamma( \Sigma^2( T^*M ) ) \longrightarrow \Gamma( T^*M  ), & [\delta z]_{i} = - {z_{ij,}}^j   \\ 
\delta^* :\, &\Gamma( T^*M ) \longrightarrow \Gamma( \Sigma^2(T^*M) ),  & [\delta^* z]_{ij} = \frac{1}{2} ( z_{i,j} + z_{j,i} )
\end{align*}
appear frequently in our calculations. 

Again, contrary to a commonly used choice of sign, but following \cite{CLN}, we use
\begin{align*}
\Delta_H = -(d d^* + d^* d)
\end{align*}
as the Hodge Laplacian on differential forms, where $d:\Omega^p(T^*M) \rightarrow \Omega^{p+1}(T^*M)$ is the exterior derivative and $d^*: \Omega^p(T^*M)  \rightarrow \Omega^{p-1}(T^*M)$ is its $L^2$ adjoint. 

\subsection{Variational formul\ae }
We next record the various expressions for infinitesimal variations of the various geometric quantities and operators relative to a smooth 1-parameter family of metrics $g(s)$,
where $g(0) = h$ is the constant curvature metric and $g'(0) = v$.  First, 
\begin{align} 
\frac{d}{ds} g^{ij} &= -g^{ia} g^{jb} v_{ab}, \\
\frac{d}{ds} \Gamma_{ij}^k &=  \varChr{i}{j}{k}{m}, \label{eqn:var-christoffel} \\
\frac{d}{ds} \riemdddu{i}{j}{k}{l} &= \frac{1}{2} g^{lp} \left( v_{kp,ji} + v_{jp,ki} - v_{jk,pi} - v_{kp,ij} - v_{ip,kj} + v_{ik,pj} \right),\\
\frac{d}{ds} \Rc_{ij}  &= \frac{1}{2} g^{pq} \left(  v_{ip,jq} - v_{qp,ji} +  v_{jp,iq} -  v_{ij,pq} \right), \label{eqn:varRicci} \\
\frac{d}{ds} \Scal &= - {(g^{jk} v_{jk})_{,m}}^m + {v_{jk,}}^{kj} - v^{jk} \Rc_{jk}. \label{eqn:var-scal-curv}
\end{align}
Covariant derivatives are defined here with respect to $g(s)$ (hence, if evaluated at $s=0$, with respect to $h$). 

\

It is convenient to use expressions for variations of the Ricci and Schouten tensors obtained by straightforward commutation of covariant derivatives: 
\begin{prop} \label{prop:var-basic-curv}
Relative to a family of metrics $g(s)$ with $g(0) = h$, we have
\begin{align*}
	\partial_s \Rc_{ij} |_{s=0}	&= -\frac{1}{2} \vdddu{i}{j}{p}{p}  + \frac{1}{2} \vddud{i}{p}{p}{j}  + \frac{1}{2} \vddud{j}{p}{p}{i}  - \frac{1}{2} \vdudd{p}{p}{j}{i} - c \vdu{p}{p} h_{ij} + c n v_{ij},\\
	\partial_s \P_{ij}|_{s=0} &= -\frac{1}{2(n-2)} \vdddu{i}{j}{p}{p} + \frac{1}{2(n-2)} \vddud{i}{p}{p}{j}  + \frac{1}{2(n-2)} \vddud{j}{p}{p}{i} - \frac{1}{2(n-2)} \vdudd{p}{p}{j}{i}  \nonumber \\
	&+\frac{1}{2(n-1)(n-2)} \vdudu{p}{p}{m}{m} h_{ij} - \frac{1}{2(n-1)(n-2)} \vdudu{p}{m}{m}{p} h_{ij} \\
	&- \frac{c}{2(n-2)} \vdu{p}{p} h_{ij} + \frac{cn}{2(n-2)} \, v_{ij},\nonumber \\
	\left. \left( h^{ab} \partial_s \P_{ab} \right) \right|_{s=0}  \, h_{ij}
	&= \frac{1}{2(n-1)} \vdudu{p}{m}{m}{p}  h_{ij} - \frac{1}{2(n-1)} \vdudu{p}{p}{m}{m} h_{ij}. 
	\end{align*}
\end{prop}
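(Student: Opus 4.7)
The plan is to derive these three formulas by starting from the general variation identities \eqref{eqn:varRicci} and \eqref{eqn:var-scal-curv}, specializing to $g = h$, and then using the fact that on a constant curvature background the Ricci commutation identity takes an especially simple form. Concretely, at $s=0$ all covariant derivatives are with respect to $h$, and the only nontrivial step is to reorder pairs of covariant derivatives in the expressions appearing on the right-hand side of \eqref{eqn:varRicci} so that they land in one of the ``canonical'' patterns $\vdddu{i}{j}{p}{p}$, $\vddud{i}{p}{p}{j}$, or $\vdudd{p}{p}{j}{i}$ used in the target identity.

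For the Ricci formula, I would rewrite $h^{pq} v_{ip,jq}$ as $\vddud{i}{p}{p}{j} + h^{pq}[\nabla_q,\nabla_j] v_{ip}$ and similarly for the $v_{jp,iq}$ term, then apply the Ricci identity $[\nabla_a,\nabla_b] v_{cd} = -\riemdddu{a}{b}{c}{k} v_{kd} - \riemdddu{a}{b}{d}{k} v_{ck}$ together with the constant-curvature expression $\riemdddu{a}{b}{c}{k} = c(\delta_a^k h_{bc} - \delta_b^k h_{ac})$. Contracting with $h^{pq}$ produces exactly two kinds of terms: a multiple of $\vdu{p}{p} h_{ij}$ (arising when the trace falls on the curvature factor acting on the $p$-index of $v$) and a multiple of $v_{ij}$ (arising when the curvature acts on the $i$-index). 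Symmetrizing in $(i,j)$ and adding to the Laplacian and double-divergence pieces already present in \eqref{eqn:varRicci} collects the result into the stated form, with coefficients $-c$ and $cn$ on the curvature contributions.

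The Schouten formula is then obtained by differentiating the defining relation $\P_{ij} = \tfrac{1}{n-2}(\Rc_{ij} - \tfrac{\Scal}{2(n-1)} g_{ij})$, substituting the just-derived expression for $\partial_s \Rc_{ij}$, the formula \eqref{eqn:var-scal-curv} for $\partial_s \Scal$, and the constant-curvature values $\Scal(h) = cn(n-1)$ and $\Rc(h) = c(n-1)h$. Careful bookkeeping of the $V h_{ij}$ and $v_{ij}$ contributions (which receive pieces from $\partial_s\Rc_{ij}$, from $\partial_s\Scal \cdot h_{ij}$, and from the $-\tfrac{\Scal}{2(n-1)(n-2)} v_{ij}$ term) gives the claimed coefficients $-\tfrac{c}{2(n-2)}$ and $\tfrac{cn}{2(n-2)}$. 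The third formula follows by taking the $h^{ij}$-trace of the Schouten identity: the curvature terms cancel exactly because $h^{ij}\bigl(-\tfrac{c}{2(n-2)} V h_{ij} + \tfrac{cn}{2(n-2)} v_{ij}\bigr) = 0$, and straightforward coefficient arithmetic reduces the remaining derivative terms to $\tfrac{1}{2(n-1)}(\vdudu{p}{m}{m}{p} - \vdudu{p}{p}{m}{m})$, which yields the displayed expression after multiplication by $h_{ij}$.

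The main obstacle is purely one of organization rather than mathematical depth: one must be meticulous in tracking which index of $v$ each curvature factor acts on, and in verifying that the several contributions to the scalar $V h_{ij}$ and pure $v_{ij}$ terms combine to give the precise coefficients stated. The clean cancellation in the trace formula (all curvature dependence disappears) is a useful consistency check, and is exactly the property exploited later when the Schouten variation feeds into the linearization of the Bach tensor.
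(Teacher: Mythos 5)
Your proposal is correct and follows exactly the route the paper indicates but does not write out: start from the general variation formula \eqref{eqn:varRicci} and the Schouten definition, evaluate at $h$, and commute covariant derivatives using the constant-curvature form of the Riemann tensor (the paper's sentence preceding the Proposition, ``obtained by straightforward commutation of covariant derivatives,'' is precisely this argument). Your Ricci identity and the constant-curvature substitution $\riemdddu{a}{b}{c}{k} = c(\delta_a^k h_{bc} - \delta_b^k h_{ac})$ are consistent with the conventions used elsewhere in the paper (e.g.\ in the proof of the Koiso-type lemma), and your sanity check that the curvature contributions cancel upon tracing is valid.
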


\subsection{The Bach flow}
We define the Bach tensor $\Ba$ as in \eqref{eqn:def-bach}, and the Bach flow by \eqref{BFndim}. Noting that $\Ba$ is trace-free in all dimensions, there are three immediate consequences. 

First, if $n=4$, then $\sW(g)$ decreases under the (modified) Bach flow, even though this is not the true gradient flow. Indeed
\begin{align*}
\frac{d}{dt} \sW(g(t)) = \int_{M} \inn{ \nabla \sW}{g'(t)} \dvol_g = \int_{M} \inn{ -4 \Ba}{\Ba + \frac{1}{12} (\Delta \Scal) g} \dvol_g = -4 \| \Ba \|^2 < 0.
\end{align*}
(This fails in higher dimensions since $\Ba$ does not arise as the variation for $\sW$.) 
Second, the Bach flow preserves volume if $M$ is compact: 
\begin{align} \label{eqn:var-vol}
\frac{d}{dt} \mathrm{vol}(g(t)) = \int_{M} \frac{1}{2} \tr^g g'(t) \dvol_g = \int_{M} \left(\frac12 \tr^g \Ba + \frac{n}{4(n-1)(n-2)} \Delta S\right) \, dV_g = 0.
\end{align}
Finally, if $g$ is a fixed point of the flow and $M$ is compact, then taking the trace of $\Ba(g) + \frac{1}{2(n-1)(n-2)} \Delta \Scal g = 0$ yields that
$\Delta \Scal = 0$; hence, $\Scal$ is constant.  Consequently the fixed point equation is simply $\Ba(g) = 0$.

\

The contracted Bianchi identity yields
\begin{align} \label{eqn:def-mbach}
\Ba_{ij} + \frac{ \Delta \Scal}{2(n-1)(n-2)} g_{ij} &= \frac{1}{n-2} {\Rc_{ij,k}}^k   - {\P_{ik,j}}^k + \P^{kl} \W_{kijl},
\end{align}
and to compute the divergence of the Schouten tensor here, we commute derivatives and apply the contracted Bianchi identity again to get
\begin{align*} 
{\P_{ik,j}}^k = {{\P_{ik,}}^k}_j + {\R_{jki}}^m {\P_{m}}^k + {\Rc_{j}}^m \P_{im} 
= \frac{1}{2(n-1)} \Scal_{,ij} + {\R_{jki}}^m {\P_{m}}^k + {\Rc_{j}}^m \P_{im}.
\end{align*}

Using this and introducing the Weyl tensor, we obtain 
\begin{align} \label{eqn:def-mbach-2}
\begin{aligned}
\Ba_{ij} &+  \frac{ \Delta \Scal}{2(n-1)(n-2)} g_{ij} \\
&= \frac{1}{n-2}{\Rc_{ij,k}}^k - \frac{1}{2(n-1)} \Scal_{,ij} - {\R_{jki}}^m {\P_{m}}^k - {\Rc_{j}}^m \P_{im} + \P^{kl} \W_{kijl} \\
&= \frac{1}{n-2} {\Rc_{ij,k}}^k  - \frac{1}{2(n-1)}  \Scal_{,ij} - 2 {\R_{jki}}^m {\P_{m}}^k - {\Rc_{j}}^m \P_{im} - \P^{kl} (\P \owedge g)_{kijl}. 
\end{aligned}
\end{align}
as the right hand side of the modified Bach flow. 

\subsection{Gauge adjustment}
\label{subsec:gauge_adj}

The flow generated by \eqref{eqn:def-mbach-2} is not parabolic as a consequence of its invariance under diffeomorphisms. Motivated by the DeTurck adjustment to the Ricci flow, we 
add a higher-order variant of the Bianchi gauge (see \eqref{def-Bianchi}) to obtain a shadow parabolic flow. To motivate this further, we briefly review the abstract Ricci-DeTurck 
setup discussed as in \cite{BH1}:  We recall that $T(g)$ is a natural tensor for a metric $g$ if, for any diffeomorphism $\phi$ of $M$, $T(\phi^* g) = \phi^*T(g)$. Now suppose $Z$ is a 
time-dependent vector field, and $g(t)$ satisfies the equation  
\begin{align}
\label{eqn:RcDTSetup}
\frac{dg}{dt} = T(g(t)) + \mathcal{L}_{Z(t)} g(t),
\end{align}
where $\mathcal{L}_{Z(t)}$ denotes the Lie derivative along the vector field $Z(t).$ We observe that if the flow by $T(g)$ preserves the volume of a compact manifold, then the 
gauge-adjusted flow \eqref{eqn:RcDTSetup} also preserves the volume since it differs by a pure divergence term.

If $-Z$ generates the one-parameter family of diffeomorphisms $\theta_t$ and $g$ satisfies equation \eqref{eqn:RcDTSetup}, then $\gbar = \theta_t^* g$ satisfies 
\begin{align} \label{eqn:abs-eqn}
\frac{d\gbar}{dt} = T(\gbar).
\end{align}
We recall as well that $[\mathcal{L}_Z g]_{ab} = \nabla_a \omega_b + \nabla_b \omega_a = 2 (\delta^* \omega)_{ab}$, where
$\omega$ is the $1$-form metrically dual to $Z$. In short, we obtain a gauge-modified flow
\begin{align} \label{eqn:abs-mod-eqn}
\frac{d g}{dt} = T(g(t)) + 2 \, \delta_{g(t)}^* \omega(t),
\end{align}
for any choice of a $t$-dependent $1$-form $\omega$.  Our goal is to find a propitious choice of $\omega$ which results in the linearized flow taking a particularly simple form.  Solutions of the gauge-modified flow are then immediately converted back to solutions of the original flow using the family of
diffeomorphisms obtained by integrating the associated vector field. 

\

We define the Bianchi operator as follows: 
\begin{align}
\label{def-Bianchi}
[\beta_h(g)]_i := -h^{jk} g_{ij,k} + (1/2) (h^{jk} g_{jk})_{,i}.
\end{align}
The contracted Bianchi identity for $h$ can be written as $\beta_h( \Ric^h ) = 0$.   For the $1$-form in the gauge adjustment setup above, we use
a linear combination of $\delta_h(g)$ and its Laplacian. Thus, for constants $\mu$ and $\nu$ to be determined below, we consider the following gauge-adjustment 
of \eqref{eqn:def-mbach-2}:
\begin{definition}(General gauge-adjusted Bach flow) 
\begin{align}
\label{eqn:def-mbach-3}
\nonumber \frac{ dg}{dt} &= \frac{1}{n-2} \left(\Delta_h R_{ij}  + 2 \left[ \delta_{g}^* \left( \frac{1}{2} \Delta_h \beta_h (g) + \frac{n-2}{4(n-1)} d\Scal  + \mu \, \delta_h g + \nu \, d( \tr^h g )  \right) \right]_{ij} \right) \\
&  - \frac{1}{2(n-1)}  \Scal_{,ij} - 2 {\R_{jki}}^m {\P_{m}}^k - {\Rc_{j}}^m \P_{im} - \P^{kl} (\P \owedge g)_{kijl}. 
\end{align}
\end{definition}

\ 

Let us motivate these new terms. 
As we show in \S \ref{sec:linearize}, the linearization of the sum of the terms involving $\Delta_h \beta_h (g)$ and $\Delta_h \Ric$ is elliptic. Next, $dS$ is included to 
cancel the term involving the Hessian of the scalar curvature term on the second line of the flow equation: 
\[
\frac{1}{n-2} \,2 \, \left[ \delta_g^* \frac{n-2}{4(n-1)} d\Scal \right]_{ij} - \frac{1}{2(n-1)} \Scal_{,ij} = 0. 
\]  
The final two terms in square brackets in equation \eqref{eqn:def-mbach-3} are lower-order in the derivatives of the metric, but are chosen specifically because if $\mu = -c(n-1)/2$ and $\nu = -c/4$,  where $c$ is the constant sectional curvature of $h$, then the linearization is self-adjoint.  This leads us finally to the following definition: 
\begin{definition}(Gauge-adjusted Bach flow)
\begin{align}
\label{eqn:def-mbach-3.5} 
\nonumber \frac{ dg_{ij}}{dt} = \frac{1}{n-2} \left( \phantom{\frac{1}{1}} {\Rc_{ij,k}}^k  \right. & \left. + \; 2 \left[ \delta_{g}^* \left( \frac{1}{2} \Delta_h \beta_h (g)  -\frac{c(n-1)}{2} \delta_h g - \frac{c}{4} d( \tr^h g ) \right) \right]_{ij} \phantom{\frac{1}{1}} \right) \\
& - 2 {\R_{jki}}^m {\P_{m}}^k - {\Rc_{j}}^m \P_{im} - \P^{kl} (\P \owedge g)_{kijl}. 
\end{align}
\end{definition}

\

Removing the $2 \delta_g^*( \cdot )$ term in \eqref{eqn:def-mbach-3} yields the Bach flow \eqref{BFndim}; hence, solutions to equation \eqref{eqn:def-mbach-3.5} can be converted by
pullback to solutions to the original Bach flow. 

\subsection{A splitting of the bundle of symmetric $2$-tensors} \label{sec:split}
We conclude this section by recalling a well-known decomposition of the space of symmetric $2$-tensors.  Let $K$ be the conformal Killing operator with respect to $h$:
\begin{align} \label{def:conformalKilling}
K: \Gamma( T^*M ) &\longrightarrow \Gamma( \Sigma^2_0( T^*M ) ), \nonumber \\
[K \alpha]_{ij} &:= \frac{1}{2} \left( \alpha_{i,j} + \alpha_{j,i} \right) - \frac{1}{n} {a_{k,}}^k h_{ij}.
\end{align}
There is an $L^2$ orthogonal decomposition  as follows:
\begin{align} \label{eqn:split}
\Gamma( \Sigma^2( T^*M ) )= \mathrm{im}(K) \oplus \{ f \cdot h \} \oplus  \{ v: \delta v =0, \tr^h v = 0 \}.
\end{align}
Tensors in the third summand are ``tranverse-traceless'',  which we write as TT. Though not appearing in this decomposition, $1$-forms in the kernel of $K$ are 
called conformal Killing $1$-forms.  A proof of this splitting on compact manifolds can be found in  \cite[Lemma 3.2] {ViaclovskyPCMI}.  Since this is also
needed here on asymptotically hyperbolic manifolds, we remark that the essentially the same proof applies. Indeed, the symbol of $K$ is injective,  hence
$K^* K$ is elliptic. Note too that $K^* = \delta$ on tracefree symmetric $2$-tensors.  After commuting derivatives (see for example equation \eqref{111} below) 
one sees that $\| K \alpha \|^2 \geq (n-1) \| \alpha\|^2$ for all $L^2$ $1$-forms $\alpha$ on hyperbolic space, hence $K$ has closed range, and thus by 
standard arguments, the decomposition \eqref{eqn:split} is valid as an orthogonal decomposition on $\mathbb H^n$.  If $M$ is only asymptotically hyperbolic and conformally
compact, this inequality holds for forms supported outside a sufficiently large compact set, which implies that $K^* K$ is Fredholm, and then the same
formal argument leads to the decomposition $L^2(M; ( \Sigma_0^2( T^*M ) ) = \mathrm{im}(K) \oplus \mathrm{ker} (K^*).$

\section{The linearization} \label{sec:linearize}
In this section we compute the linearization of the right-hand side of the general gauge-adjusted flow \eqref{eqn:def-mbach-3} at a metric $h$ of constant 
sectional curvature $c$.  The main results of this section are Theorem \ref{thm:Lin}, which documents the linearization, and Proposition \ref{prop:L-selfadj}, which 
specifies the choices of constants $\mu$ and $\nu$ that ensure that the linearization is self-adjoint. We relegate some lengthy but straightforward proofs of 
various commutation formul\ae\ to Appendix \ref{sec:techappendix}.

\

We begin by computing the linearization of the Laplacian of the Ricci tensor.
\begin{prop}
If $g(s)$ is a 1-parameter family of metrics with $h = g(0)$ a metric with constant sectional curvature $c$ and $v = g'(0)$, then 
\begin{align} \label{eqn:lin-tensor}
\begin{aligned}
& D[ \Delta^g \Ric(g) ]_h v_{ij} := \left. \frac{\partial}{\partial s} ( \Delta^{g(s)} \Ric(g(s))_{ij} )\right|_{s=0} \\
&\ \ = -\frac{1}{2} \vdddudu{i}{j}{p}{p}{m}{m}
  + \frac{1}{2} \vdduddu{i}{p}{p}{j}{m}{m} + \frac{1}{2} \vdduddu{j}{p}{p}{i}{m}{m} - \frac{1}{2} \vdudddu{p}{p}{j}{i}{m}{m} - c \vdudu{p}{p}{m}{m} h_{ij} + 
c \vdddu{i}{j}{m}{m}. 
\end{aligned}
\end{align}
\end{prop}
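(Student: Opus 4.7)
The plan is to exploit the fact that on a constant sectional curvature background $h$ the Ricci tensor $\Ric(h)=c(n-1)h$ is parallel, so that $\nabla^h\Ric(h)=0$ and $(\nabla^h)^2\Ric(h)=0$. This will cause most of the cross terms generated by differentiating $\Delta^{g(s)}\Ric(g(s))$ in $s$ to drop out.

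First, I would write $(\Delta^g\Ric(g))_{ij}=g^{kl}\nabla^g_k\nabla^g_l\Ric(g)_{ij}$ and separate $\partial_s|_{s=0}$ into four natural contributions: (i) variation of $g^{kl}$, which multiplies $(\nabla^h)^2\Ric(h)=0$; (ii) variation of the outer connection $\nabla^g_k$, which multiplies $\nabla^h\Ric(h)=0$; (iii) variation of the inner connection $\nabla^g_l$ acting on $\Ric(h)$; and (iv) variation of the tensor $\Ric(g)$, which produces the main $\Delta^h(\partial_s\Ric|_{s=0})$ term. Contributions (i) and (ii) vanish by parallelness, leaving
\begin{align*}
D[\Delta^g\Ric(g)]_h\,v_{ij}
=\Delta^h\bigl(\partial_s\Ric|_{s=0}\bigr)_{ij}
+h^{kl}\nabla^h_k\bigl[(\partial_s\nabla^g_l)|_{s=0}\Ric(h)\bigr]_{ij}.
\end{align*}

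To evaluate the Christoffel correction (iii), I would use \eqref{eqn:var-christoffel} together with $\Ric(h)_{ab}=c(n-1)h_{ab}$ to obtain
\begin{align*}
\bigl[(\partial_s\nabla^g_l)|_{s=0}\Ric(h)\bigr]_{ij}
=-c(n-1)\bigl[(\partial_s\Gamma^m_{li})|_{0}\,h_{mj}+(\partial_s\Gamma^m_{lj})|_{0}\,h_{im}\bigr].
\end{align*}
Expanding each Christoffel variation via \eqref{eqn:var-christoffel} produces six $v$-derivative terms inside the bracket; using the symmetry $v_{ab}=v_{ba}$, four of these cancel in pairs and the remainder collapses to exactly $v_{ij,l}$. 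Hence the correction equals $-c(n-1)\,h^{kl}v_{ij,lk}=-c(n-1)\,\Delta^h v_{ij}=-c(n-1)\,\vdddu{i}{j}{m}{m}$.

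Finally, I would substitute the expression for $\partial_s\Ric|_{s=0}$ from Proposition~\ref{prop:var-basic-curv} into the main term and apply $\Delta^h$ to each summand. Since $h$ is parallel, this simply appends two contracted covariant derivative indices to every symbol. The result matches the right-hand side of \eqref{eqn:lin-tensor} term by term except at the final contribution: Proposition~\ref{prop:var-basic-curv} supplies $cn\,v_{ij}$, which becomes $cn\,\vdddu{i}{j}{m}{m}$, and combining with the correction $-c(n-1)\,\vdddu{i}{j}{m}{m}$ yields precisely the $c\,\vdddu{i}{j}{m}{m}$ that appears in \eqref{eqn:lin-tensor}. The main obstacle I anticipate is the bookkeeping: one must verify rigorously that contributions (i) and (ii) actually vanish (so only the single inner Christoffel correction survives), and that the combinatorial simplification in step (iii) truly collapses to $v_{ij,l}$ rather than some messier symmetric expression. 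That clean collapse is what produces the precise shift from $cn$ to $c$ in the final coefficient.
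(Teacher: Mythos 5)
Your argument is correct and follows essentially the same route as the paper's: both rely on the parallelness of $\Ric(h)$ and $h$ to kill the variations of $g^{kl}$ and the outer connection, on the explicit Christoffel variation \eqref{eqn:var-christoffel} to collapse the inner-connection correction to $-c(n-1)\,\Delta^h v$, and on Proposition~\ref{prop:var-basic-curv} for the main term, so that $cn - c(n-1) = c$ produces the final coefficient. The only difference is organizational: the paper differentiates $\Ric_{ij,k}$ and then $\Ric_{ij,km}$ one covariant derivative at a time, whereas you identify the four contributions up front and discard the vanishing ones immediately, which is a cleaner presentation of the same computation.
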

\begin{proof}
Differentiating 
\begin{align*}
\Rc_{ij,k} &= \nabla_k \Rc_{ij} = \partial_k \Rc_{ij} - \Gamma_{ki}^p \Rc_{pj} - \Gamma_{kj}^p \Rc_{ip}
\end{align*}
with respect to $s$ produces
\begin{align} \label{eqn:loc-a1}
\begin{aligned}
\partial_s \Rc_{ij,k} 
&= \partial_s \partial_k \Rc_{ij} - \Gamma_{ki}^p \partial_s \Rc_{pj} - \Gamma_{kj}^p \partial_s \Rc_{pi} - (\partial_s\Gamma)_{ki}^p \Rc_{pj} - \partial_s(\Gamma_{kj}^p) \Rc_{pi} \\
	&= \nabla_k (\partial_s \Rc_{ij}) - (\partial_s\Gamma)_{ki}^p \Rc_{pj} - \partial_s(\Gamma_{kj}^p) \Rc_{pi}.
\end{aligned}
\end{align}
Setting $s=0$ and applying Proposition \ref{prop:var-basic-curv} and equation \eqref{eqn:var-christoffel}, we obtain
\begin{align*}
\partial_s [\Rc_{ij,k}] |_{s=0}
&= \nabla_k \left( -\frac{1}{2} \vdddu{i}{j}{p}{p}  + \frac{1}{2} \vddud{i}{p}{p}{j} + \frac{1}{2} \vddud{j}{p}{p}{i} - \frac{1}{2} \vdudd{p}{p}{j}{i} - c \vdu{p}{p} h_{ij} + c n v_{ij} \right) - c(n-1) v_{ij,k} \nonumber \\
&= -\frac{1}{2} {\vdddu{i}{j}{p}{p}}_k  + \frac{1}{2} \vddud{i}{p}{p}{jk} + \frac{1}{2} \vddud{j}{p}{p}{ik} - \frac{1}{2} \vdudd{p}{p}{j}{ik} - c {\vdu{p}{p}}_{,k} h_{ij} + c v_{ij,k}.
\end{align*}

\

Taking another covariant derivative, 
\begin{align*}
\Rc_{ij,km} &= \partial_m \Rc_{ij,k} - \Gamma_{mi}^p \Rc_{pj,k} - \Gamma_{mj}^p \Rc_{ip,k} - \Gamma_{mk}^p \Rc_{ij,p},
\end{align*}
and calculating as has been done for equation \eqref{eqn:loc-a1}, we obtain
\begin{align*}
\partial_s \Rc_{ij,km} &= \nabla_m \partial_s \Rc_{ij,k} - \partial_s \Gamma_{mi}^p \Rc_{pj,k} - \partial_s \Gamma_{mj}^p \Rc_{ip,k} - \partial_s \Gamma_{mk}^p \Rc_{ij,p}.
\end{align*}
Therefore, at $s=0$, and using that $\nabla \Rc_{ij} = 0$ since $h$ has constant curvature, 
\begin{align*}
\partial_s [\Rc_{ij,km}] |_{s=0}
&= -\frac{1}{2} \vdddudd{i}{j}{p}{p}{k}{m}  + \frac{1}{2} \vdduddd{i}{p}{p}{j}{k}{m} + \frac{1}{2} \vdduddd{j}{p}{p}{i}{k}{m} - \frac{1}{2} \vdudddd{p}{p}{j}{i}{k}{m} - c \vdudd{p}{p}{k}{m} h_{ij} + c v_{ij,km}.
\end{align*}

\

Finally, differentiating $\Delta \Rc_{ij} = g^{km} \Rc_{ij,km}$ at $s=0$, and using that $\nabla \Rc_{ij} = 0$,  we see finally that
\begin{align*}
\partial_s [g^{km} \Rc_{ij,km}] |_{s=0}
&= -\frac{1}{2} \vdddudu{i}{j}{p}{p}{m}{m}  + \frac{1}{2} \vdduddu{i}{p}{p}{j}{m}{m} + \frac{1}{2} \vdduddu{j}{p}{p}{i}{m}{m} - \frac{1}{2} \vdudddu{p}{p}{j}{i}{m}{m}  - c \vdudu{p}{p}{m}{m}  h_{ij} + c \vdddu{i}{j}{m}{m}.
\end{align*}
\end{proof}

\begin{prop}
\label{prop:lin-high-order}
Using the same notation as above, we obtain:
\begin{align} \label{eqn:lin-gauge}
\begin{aligned}
D[ \delta_{g}^* & ( \Delta_h \beta_h (g)  ) ]_h v_{ij} = -\frac{1}{2} \vdduddu{i}{p}{p}{j}{m}{m} -\frac{1}{2} \vdduddu{j}{p}{p}{i}{m}{m} + 
\frac{1}{2} \vdudddu{p}{p}{i}{j}{m}{m} \\ & - 2c  \vdudu{m}{p}{p}{m} h_{ij} + c \vdudu{p}{p}{m}{m}  h_{ij} \\
& + \frac{c(n+1)}{2} \vddud{i}{p}{p}{j} - \frac{c(n+1)}{2}  \vdudd{p}{p}{i}{j}  + \frac{c(n+1)}{2} \vddud{j}{p}{p}{i} . 
\end{aligned}
\end{align}
\end{prop}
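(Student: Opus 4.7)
The key structural observation is that $\beta_h(h)$ vanishes: since $h_{ij,k}=0$ and $\operatorname{tr}_h h = n$ is constant, both terms in the definition of $\beta_h$ are zero at $g=h$, and hence $\Delta_h \beta_h(h)=0$. Consequently, when I linearize $\delta_g^*(\Delta_h \beta_h(g))$ at $g=h$, the Leibniz variation splits into a piece in which $\delta_g^*$ is varied and a piece in which the $1$-form argument is varied; the first of these is a linear operator applied to the zero $1$-form and therefore drops out. What remains is
\[
D[\delta_g^*(\Delta_h \beta_h(g))]_h v = \delta_h^* \Delta_h (D\beta_h\!\cdot\! v),
\qquad (D\beta_h\!\cdot\! v)_i = -v_{ip,}{}^p + \tfrac{1}{2} v_p{}^p{}_{,i},
\]
where linearity in the argument is used because $h$ is held fixed inside $\beta_h$.

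Next, I apply $\Delta_h$ to this $1$-form and then symmetrize via $\delta_h^*$. All derivatives are now with respect to $h$, so expansion yields a symmetric expression in which the outermost derivative $\nabla_j$ (resp.\ $\nabla_i$) sits \emph{outside} the rough Laplacian $\Delta_h = \nabla_m \nabla^m$. The proposition, however, orders these derivatives with $\nabla_j$ positioned \emph{between} the initial divergence or trace operator and $\Delta_h$. The plan is therefore to commute $\nabla_j$ inward through $\Delta_h$ using the Ricci identity $[\nabla_a,\nabla_b]T_{c} = -\riemdddu{a}{b}{c}{d}T_d$ on $1$-forms, applied twice, and with care to also track commutators acting on the interior indices of $v$. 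On a constant sectional curvature background, $\riemdddd{i}{j}{k}{l} = c(h_{il}h_{jk} - h_{ik}h_{jl})$ and $\Rc_{ij}=c(n-1)h_{ij}$, so each commutator collapses into an explicit multiple of $c$ times a lower-order derivative of $v$, with contractions against $h$.

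To match the coefficients in the statement, I handle the divergence piece $-v_{ip,}{}^p$ and the trace piece $\tfrac{1}{2} v_p{}^p{}_{,i}$ in parallel. For the divergence piece, commuting $\nabla_j$ past $\Delta_h$ produces three types of corrections: a Ricci-trace term on the free index $i$ (contributing $c(n-1)$), a Ricci-type contribution arising from the Ricci identity acting on the contracted index $p$ of $v_{ip}$ (contributing an additional $c$), and Kulkarni--Nomizu-type trace terms of the form $c\, h_{ij}\, v^{mp}{}_{,pm}$ and $c\, h_{ij}\, v_p{}^p{}_{,m}{}^m$. After symmetrizing in $i,j$, the first two sources combine to yield the coefficient $\frac{c(n+1)}{2}$ multiplying each of $v_{ip,}{}^p{}_{,j}$, $v_{jp,}{}^p{}_{,i}$, and $v_p{}^p{}_{,ij}$ (the last with opposite sign from the trace-piece commutator), while the third source produces the $-2c$ and $+c$ coefficients on the $h_{ij}$-trace terms. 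The trace piece similarly contributes its fourth-order term $\frac{1}{2}v_p{}^p{}_{,ij}{}_m{}^m$ directly, and a single Ricci-commutator adjustment that combines with the divergence-piece contribution to give the final $+\frac{c(n+1)}{2}$ coefficient on $v_p{}^p{}_{,ij}$.

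The principal obstacle is purely bookkeeping: four covariant derivatives must be repositioned, and the Ricci identity applied to a rank-two tensor yields two curvature terms per commutation, which then contract against further derivatives of $v$ in several distinct patterns. To keep the computation honest I would treat the divergence and trace sectors separately, perform $\Delta_h$ and $\delta_h^*$ symbolically (without invoking curvature) to isolate the fourth-order skeleton, and only then substitute the constant-curvature Ricci identity to evaluate each commutator in closed form. Once organized this way the coefficients $\pm\frac{c(n+1)}{2}$, $-2c$, and $+c$ arise mechanically from the unique combinations of $\Rc_{ab}$ and $\riem_{abcd}$ contractions with $v$.
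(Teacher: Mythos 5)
Your structural plan is the same as the paper's: you correctly use that $\beta_h(h)=0$ (so the variation of $\delta_g^*$ hitting the zero argument drops out) and that $g\mapsto\beta_h(g)$ is linear in $g$ with $h$ fixed (so the linearization is $\delta_h^*\Delta_h\beta_h(v)$), and the remaining task is to commute $\delta^*$ past $\Delta_h$ acting on a $1$-form, which the paper isolates as the identity $[\delta^*,\nabla^*\nabla]\alpha = c(n+1)\delta^*\alpha + 2c(\delta\alpha)h$ of Lemma \ref{lemma:commute-delta-div}. The flaw is in your bookkeeping of the curvature corrections. That commutator is a universal identity for an arbitrary $1$-form $\alpha$ and is blind to the interior structure of $\alpha = \beta_h(v)$: the contracted index $p$ of $v_{ip}$ is already saturated inside $\alpha_i$, so there is no separate contribution "from the Ricci identity acting on the contracted index $p$." As written, your count $c(n-1) + c$ would give a coefficient $\tfrac{cn}{2}$ after symmetrization, not the required $\tfrac{c(n+1)}{2}$. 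What actually happens in the Lemma: two applications of the Ricci identity to the $2$-tensor $\nabla\alpha$ produce a Ricci term $c(n-1)\alpha_{i,j}$ together with a full Riemann contraction $2c\,\alpha_{j,i} - 2c\,(\alpha_{k,}{}^{k})h_{ij}$; symmetrizing in $i,j$, the first two combine to $c(n+1)\delta^*\alpha_{ij}$, while the last supplies the two $h_{ij}$-trace terms. Substituting $\alpha_i = -v_{ip,}{}^{p} + \tfrac12 v_{p}{}^{p}{}_{,i}$ then reproduces every coefficient in \eqref{eqn:lin-gauge}. In short: apply the Ricci identity once to the $1$-form $\alpha$, not deeper to $v$, or you will both over-count and invite extraneous terms.
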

\begin{proof}
Since $g \longmapsto \Delta_h \beta_h (g(s))$ is a linear map, 
\begin{align*}
\partial_s ( \Delta_h \beta_h (g(s)) )|_{s=0} = \Delta_h \beta_h ( v ).
\end{align*}
If $w$ is a 1-form,  
\begin{align*}
(\delta^*_{g(s)} w)_{ij} &= \frac{1}{2} (w_{i,j} + w_{j,i}) = \frac{1}{2} (\partial_j w_{i} + \partial_i w_{j}) - \Gamma_{ij}^p w_p,
\end{align*}
so that 
\begin{align} \label{eqn:var-delstar}
\partial_s (\delta^* w)_{ij} |_{s=0} &= \delta_{g(s)}^* \left(\partial_s w_{ij} |_{s=0}\right) - (\partial_s \Gamma_{ij}^p)  w_p|_{s=0}.
\end{align}
	
Applying this to $w(s) = \Delta_h \beta_h (g(s))$, and noting that $\beta_h( g(0)) = \beta_h(h) = 0$, to get
\begin{align} \label{eqn:loc1}
\partial_s  ( \delta_{g(s)}^* ( \Delta_h \beta_h (g(s)) )|_{s=0} = \delta_h^* \Delta_h \beta_h(v).
\end{align}
	
On the other hand, we note that
\begin{equation*}
\begin{split}
[\beta_h(v)]_{i} & = - \vddu{i}{p}{p} + \frac{1}{2}  \vdud{p}{p}{i} \Longrightarrow \\
[\delta_h^* \beta_h(v)]_{ij} &=
 -\frac{1}{2} \vddud{i}{p}{p}{j} + \frac{1}{2} \vdudd{p}{p}{i}{j} -\frac{1}{2}  \vddud{j}{p}{p}{i} \Longrightarrow \\
[\Delta_h \delta_h^* \beta_h(v)]_{ij} &= -\frac{1}{2}  \vdduddu{i}{p}{p}{j}{m}{m} + \frac{1}{2} \vdudddu{p}{p}{i}{j}{m}{m}-\frac{1}{2} \vdduddu{j}{p}{p}{i}{m}{m}.
\end{split}
\end{equation*}
Then equation \eqref{eqn:loc1} together with Lemma \ref{lemma:commute-delta-div} yield equation \eqref{eqn:lin-gauge}.
\end{proof}

Using similar calculations, we obtain the following: 

\begin{prop}
\begin{align*}
\partial_s [ \delta_{g(s)}^* \delta_h g ]_{ij}|_{s=0} &= -\frac{1}{2} ( \vddud{i}{p}{p}{j} + \vddud{j}{p}{p}{i} ),\\
\partial_s [ \delta_{g(s)}^* d (\tr^h g)_{ij} ]_{s=0} &= \vdudd{p}{p}{i}{j}.
\end{align*}
\end{prop}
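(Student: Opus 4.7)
The plan is to apply the general variation formula for $\delta^*_{g(s)}$ derived in equation \eqref{eqn:var-delstar} to the two $s$-dependent $1$-forms $w(s) := \delta_h g(s)$ and $w(s) := d(\tr^h g(s))$, respectively. Recall that that formula reads
\begin{equation*}
\partial_s \left. \left[ (\delta_{g(s)}^* w(s))_{ij} \right] \right|_{s=0} = \delta_h^*\left( \partial_s w|_{s=0} \right)_{ij} - \left( \partial_s \Gamma_{ij}^p \big|_{s=0} \right) w_p(0).
\end{equation*}
The key simplification is that in both cases $w(0) = 0$, so the Christoffel-variation term drops out. For the first form this holds because $\delta_h h = 0$: indeed, the commas in the definition of $\delta_h$ denote $h$-covariant derivatives, and $h$ is parallel with respect to its own connection. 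For the second, $\tr^h h = h^{jk} h_{jk} = n$ is constant, so $d(\tr^h h) = 0$.

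Having disposed of the Christoffel term, what remains is to compute $\partial_s w(s)|_{s=0}$ for each choice of $w$ and to apply $\delta_h^*$. For the first, the map $g \mapsto \delta_h g$ is linear, so
\begin{equation*}
\partial_s [\delta_h g(s)]_i \big|_{s=0} = [\delta_h v]_i = -\vddu{i}{p}{p}.
\end{equation*}
Applying the formula $[\delta_h^* \alpha]_{ij} = \tfrac12(\alpha_{i,j}+\alpha_{j,i})$ then yields
\begin{equation*}
\delta_h^*(\delta_h v)_{ij} = -\tfrac12 \left( \vddud{i}{p}{p}{j} + \vddud{j}{p}{p}{i} \right),
\end{equation*}
which is the first claimed identity.

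For the second form, linearity in $g$ and the fact that $h^{jk}$ is $h$-parallel give
\begin{equation*}
\partial_s \left[ d(\tr^h g(s)) \right]_i \big|_{s=0} = \left( h^{jk} v_{jk} \right)_{,i} = \vdu{p}{p}{}_{,i}.
\end{equation*}
Since $v^p{}_p$ is a scalar, its covariant Hessian is symmetric, so
\begin{equation*}
\delta_h^* d\!\left( \vdu{p}{p} \right)_{\!ij} = \tfrac12 \left( \vdudd{p}{p}{i}{j} + \vdudd{p}{p}{j}{i} \right) = \vdudd{p}{p}{i}{j},
\end{equation*}
giving the second identity. The only routine ingredient is the symmetry of the Hessian; I do not anticipate any obstacle. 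The argument is essentially the same skeleton as in Proposition \ref{prop:lin-high-order}, but without the higher-derivative bookkeeping since here $w(0)$ already vanishes without invoking the contracted Bianchi identity.
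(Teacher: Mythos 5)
Your proof is correct and follows the same route the paper indicates by its remark ``Using similar calculations'': apply the variation formula \eqref{eqn:var-delstar}, observe that the Christoffel term vanishes because $\delta_h h = 0$ and $d(\tr^h h) = 0$, and then apply $\delta_h^*$ to the linear-in-$g$ variations $\delta_h v$ and $d(\tr^h v)$. The only ingredient beyond Proposition~\ref{prop:lin-high-order}'s template is the symmetry of the scalar Hessian, which you correctly invoke for the second identity.
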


\ 

In the next proposition we compute the linearization of the `quadratic curvature' terms appearing in equation \eqref{eqn:def-mbach-3.5}.  To do that, we introduce the following notation:
\begin{align*}
Q_1 &:= - 2 {\R_{jki}}^m {\P_{m}}^k, \\
Q_2 &:= - {\Rc_{j}}^m \P_{im}, \\
Q_3 &:=  - \P^{kl} (\P \owedge g)_{kijl}.
\end{align*}
\begin{prop}
If $g(s)$ is a 1-parameter family of metrics with $h = g(0)$ a metric with constant sectional curvature $c$ and $v = g'(0)$, then  
 \begin{align*}
	&D[Q_1+Q_2+Q_3]_h v_{ij} \nonumber
	\\&= \frac{c}{n-2} \vdudu{p}{m}{m}{p} h_{ij} - \frac{c}{n-2} \vdudu{p}{p}{m}{m} h_{ij} + \frac{cn}{2(n-2)} (\vdddu{i}{j}{p}{p} - \vddud{i}{p}{p}{j} - \vddud{j}{p}{p}{i} + \vdudd{p}{p}{j}{i})\\
	& - \frac{c^2 n}{n-2} v_{ij} + \frac{c^2}{n-2} \vdu{p}{p} h_{ij}. \nonumber
	\end{align*}
\end{prop}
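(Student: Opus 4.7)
The plan is to prove this by direct computation, applying the product rule to each of $Q_1$, $Q_2$, $Q_3$ and then exploiting the fact that at the constant-curvature background $h$, every undifferentiated curvature tensor takes an extremely rigid form, namely
\[
\P_{ij}\big|_h = \tfrac{c}{2}\,h_{ij}, \qquad \P_m{}^k\big|_h = \tfrac{c}{2}\,\delta_m^k, \qquad \rcdd{i}{j}\big|_h = c(n-1)h_{ij},
\]
\[
\riemdddd{i}{j}{k}{l}\big|_h = c\,(h_{il}h_{jk} - h_{ik}h_{jl}).
\]
The variations of the differentiated factors are then read directly from Proposition \ref{prop:var-basic-curv} (for $\partial_s \rcdd{i}{j}$ and $\partial_s \P_{ij}$), from the stated $\partial_s \riemdddu{i}{j}{k}{l}$ formula, and from $\partial_s g^{ab} = -v^{ab}$.

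For $Q_1 = -2\,\R_{jki}{}^m\P_m{}^k$, replacing $\P_m{}^k$ by $\frac{c}{2}\delta_m^k$ reduces the first Leibniz piece to $-c\,\partial_s \R_{jki}{}^k$, which after substituting the variation of Riemann becomes a linear combination of terms of the form $\vdddu{i}{j}{p}{p}$, $\vddud{i}{p}{p}{j}$, $\vddud{j}{p}{p}{i}$, $\vdudd{p}{p}{j}{i}$; the second Leibniz piece involves $\partial_s \P_m{}^k = h^{kl}\partial_s \P_{ml} - v^{kl}\P_{ml}|_h$ contracted against $\riem|_h$, which because of the $\delta$-structure of $\riem|_h$ produces algebraic $c^2$-terms in $v_{ij}$ and $\vdu{p}{p}h_{ij}$ together with second-derivative contributions coming from $\partial_s\P$. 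The contribution from $Q_2 = -\rcdu{j}{m}\P_{im}$ is handled identically: each factor is a multiple of $h$ at the background, so the product rule yields $-\frac{c}{2}\partial_s \rcdu{j}{m}\cdot h_{im} - \frac{c(n-1)}{2}h_{jm}\partial_s \P_{im}$ (plus the $\partial_s h^{-1}$ correction from raising the index on $\Rc$), and inserting Proposition \ref{prop:var-basic-curv} is routine. Finally $Q_3 = -\P^{kl}(\P\owedge g)_{kijl}$ collapses cleanly since at $h$ one has $(\P\owedge h)_{kijl}|_h = c^2(h_{kl}h_{ij} - h_{kj}h_{il} - \text{sym})$, producing only purely algebraic terms before differentiation; the Leibniz pieces involve $\partial_s \P$, $\partial_s g^{kl}$, and the variation of $(\P\owedge g)_{kijl}$ itself.

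The main obstacle is bookkeeping: many of the second-derivative terms arising separately from $Q_1$, $Q_2$, and $Q_3$ partially cancel or combine, and the coefficients $\frac{1}{n-2}$, $\frac{1}{n-1}$, $c$, $c^2$, $n$ must be tracked carefully. In particular, the pure-trace variation identity
\[
h^{ab}\partial_s \P_{ab}\big|_{s=0}\, h_{ij} = \tfrac{1}{2(n-1)}\vdudu{p}{m}{m}{p}\,h_{ij} - \tfrac{1}{2(n-1)}\vdudu{p}{p}{m}{m}\,h_{ij}
\]
from Proposition \ref{prop:var-basic-curv} is what ultimately produces the two leading $\frac{c}{n-2}h_{ij}$ second-derivative terms in the stated formula, arising from the trace appearing inside $Q_3$ once the Kulkarni–Nomizu product is expanded. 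Collecting all contributions by differential structure (second-order in $v$ versus algebraic) and by tensor type ($v_{ij}$ versus $\vdu{p}{p}h_{ij}$) yields precisely the claimed expression; the stated cancellation of the $\frac{1}{n-1}$ denominators in favor of $\frac{1}{n-2}$ is the final check confirming that the computation has been done correctly.
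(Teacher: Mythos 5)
Your proposal follows essentially the same route as the paper: Leibniz rule on each $Q_I$, reduce the undifferentiated factors to their rigid constant-curvature forms, write the result in terms of $\partial_s\Rc_{ij}$, $\partial_s\P_{ij}$, and $h^{ab}\partial_s\P_{ab}$, and then substitute the explicit formul\ae\ of Proposition~\ref{prop:var-basic-curv}. Two small inaccuracies worth flagging, though they don't change the plan: the Kulkarni--Nomizu term evaluates to $(\P\owedge g)_{kijl}\big|_h = c\,(h_{kl}h_{ij}-h_{kj}h_{il})$, linear rather than quadratic in $c$; and the $\frac{c}{n-2}\,\vdudu{p}{m}{m}{p}h_{ij}$ and $\frac{c}{n-2}\,\vdudu{p}{p}{m}{m}h_{ij}$ terms in the final answer arise from combining the $h_{ij}$-proportional pieces of $\partial_s\P_{ij}$ with the pure-trace variation $h^{ab}\partial_s\P_{ab}\,h_{ij}$ (contributions coming from all three of $Q_1,Q_2,Q_3$), not solely from the trace inside $Q_3$.
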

\begin{proof}
We begin by linearizing each of the quadratic terms $Q_I, I = 1,2,3$, and then writing the result in terms of the linearization of the curvature quantities appearing in Proposition
\ref{prop:var-basic-curv}. Combining these terms and inserting curvatures from that proposition leads to the result. This computation is straightforward and tedious, 
but is simplified since we evaluate all terms at the constant curvature metric $h$. 
	
	\
	
For $Q_1$, we note that
\begin{align*}
\riemdddu{j}{k}{i}{m} {\P_{m}}^k = g^{k \ell} \riemdddu{j}{k}{i}{m} \P_{m\ell};
\end{align*}
hence
\begin{align*}
\partial_s [ {\R_{jki}}^m {\P_{m}}^k ] &= -g^{k p} g^{\ell q} v_{p q} \riemdddu{j}{k}{i}{m} \P_{m\ell} + g^{k \ell} (\partial_s \riemdddu{j}{k}{i}{m}) \P_{m\ell} + g^{k \ell} 
\riemdddu{j}{k}{i}{m} (\partial_s \P_{m\ell}).
\end{align*}
So evaluating at $g(0) = h$, we get finally that
\begin{align} \label{eqn:loc-t1-1}
\begin{aligned}
\partial_s [ {\R_{jki}}^m {\P_{m}}^k ]_{s=0} &= -\frac{c^2}{2} ( v_{ij} - \tr^h v \, h_{ij} ) + \frac{c}{2} \delta^k_m (\partial_s \riemdddu{j}{k}{i}{m}) + c  \partial_s \P_{ij} - 
c h^{pq} \partial_s \P_{pq} \, h_{ij} \\ 
&= -\frac{c^2}{2} ( v_{ij} - \tr^h v \, h_{ij} ) - \frac{c}{2}  (\partial_s \Rc_{ij}) + c  \partial_s \P_{ij} - c h^{pq} \partial_s \P_{pq} \, h_{ij}.
\end{aligned}
\end{align}
Using equation \eqref{eqn:loc-t1-1}, we obtain the variation of the first term,
\begin{align} \label{eqn:loc-DQ1}
D[Q_1]_h v_{ij} = c^2 ( v_{ij} - \vdu{p}{p} \, h_{ij} ) + c  (\partial_s \Rc_{ij}) - 2c \partial_s \P_{ij} + 2c h^{pq} \partial_s \P_{pq} \, h_{ij}.
\end{align}

\
	
Next, for $Q_2$, we start from ${\Rc_{j}}^m \P_{im} = g^{pm} \Rc_{jp} \P_{im}$, and calculate the variation at $s=0$:
\begin{align}
\label{eqn:var Q2}
\partial_s [{\Rc_{j}}^m \P_{im}]|_{s=0} &= -h^{pa} h^{mb} v_{ab} \Rc_{jp} \P_{im} + h^{pm} (\partial_s \Rc_{jp}) \P_{im} + h^{pm} \Rc_{jp} (\partial_s \P_{im}).
\end{align}
Inserting $g(0) = h$, equation \eqref{eqn:var Q2} becomes
\begin{align*}
\partial_s [{\Rc_{j}}^m \P_{im}]|_{s=0} 
&= - \frac{c^2(n-1)}{2} v_{ij} + \frac{c}{2} (\partial_s \Rc_{ij}) +  c(n-1) (\partial_s \P_{ij}),
\end{align*}
so that
\begin{align}\label{eqn:loc-DQ2}
D[Q_2]_h v_{ij} = \frac{c^2(n-1)}{2} v_{ij} - \frac{c}{2} (\partial_s \Rc_{ij}) -  c(n-1) (\partial_s \P_{ij}).
\end{align}

\
	
For $Q_3$, we write
\begin{align*}
\P^{kl} (\P \owedge g)_{kijl} &= g^{kp} g^{lq} \P_{kl} (\P \owedge g)_{pijq},
\end{align*}
and then compute that
\begin{align*}\begin{aligned}
\partial_s [\P^{kl} (\P \owedge g)_{kijl}] &= -g^{ka} g^{pb} v_{ab}  g^{lq} \P_{kl} (\P \owedge g)_{pijq} - g^{kp} g^{la} g^{qb} v_{ab} \P_{kl} (\P \owedge g)_{pijq}\\
&+g^{kp} g^{lq} (\partial_s \P_{kl}) (\P \owedge g)_{pijq} + g^{kp} g^{lq} \P_{kl} \partial_s (\P \owedge g)_{pijq}.
\end{aligned}
\end{align*}
Evaluating at $h$ and using the fact that $(\P \owedge g)_{kijl}|_{s=0} = \frac{c}{2} (h \owedge h)_{kijl} = c (h_{kl} h_{ij} - h_{jk} h_{il})$, we find
\begin{align} \label{eqn:loc-T4-1}
\begin{aligned}
\partial_s [\P^{kl} (\P \owedge g)_{kijl}]_{s=0} &= -c^2 (( \tr^h v) h_{ij} - v_{ij}) \\
&+c h^{kl} (\partial_s \P_{kl})h_{ij} - c \partial_s \P_{ij} + \frac{c}{2} h^{kl} \partial_s (\P \owedge g)_{kijl}.
\end{aligned}
\end{align}
We now use $\partial_s (\P \owedge g) = (\partial_s \P) \owedge g + \P \owedge v$ to obtain the variation at $s=0$:
\begin{align}
\label{eqn:var Q3 piece}
h^{kl} \partial_s (\P \owedge g)_{kijl} |_{s=0}&= h^{kl} (\partial_s \P_{kl}) \, h_{ij}  + \frac{c}{2} \tr^h v \, h_{ij} + (n-2) \partial_s \P_{ij} + \frac{c(n-2)}{2} v_{ij}.
\end{align}
Replacing the quantity on the left-hand side of  \eqref{eqn:var Q3 piece} in the last summand in \eqref{eqn:loc-T4-1}, we have
\begin{align*} 
\begin{aligned}
\partial_s [\P^{kl} (\P \owedge g)_{kijl}]_{s=0} &= -c^2 (( \tr^h v) h_{ij} - v_{ij}) +c h^{kl} (\partial_s \P_{kl})h_{ij} - c \partial_s \P_{ij} \\
&+ \frac{c}{2} \left( h^{kl} (\partial_s \P_{kl}) \, h_{ij}  + \frac{c}{2} \tr^h v \, h_{ij} + (n-2) \partial_s \P_{ij} + \frac{c(n-2)}{2} v_{ij} \right) \\
&= \left( -\frac{3 c^2}{4} \tr^h v  + \frac{3 c}{2} h^{kl} \partial_s \P_{kl} \right) h_{ij} +\frac{c(n-4)}{2} \partial_s \P_{ij} + \frac{c^2 (n+2)}{4} v_{ij},
\end{aligned}
\end{align*}
and so finally, 
\begin{align}\label{eqn:loc-DQ3}
D[ Q_3]_{ij} = \left( \frac{3 c^2}{4} \vdu{p}{p} - \frac{3 c}{2} h^{kl} \partial_s \P_{kl} \right) h_{ij} -\frac{c(n-4)}{2} \partial_s \P_{ij} - \frac{c^2 (n+2)}{4} v_{ij}.
\end{align}

\

Adding \eqref{eqn:loc-DQ1}, \eqref{eqn:loc-DQ2} and \eqref{eqn:loc-DQ3}, and using Proposition \ref{prop:var-basic-curv}, we see that
\begin{align*}
&D[Q_1+Q_2+Q_3]_h v_{ij} \nonumber
\\&= \frac{c}{n-2} \vdudu{p}{m}{m}{p} h_{ij} - \frac{c}{n-2} \vdudu{p}{p}{m}{m} h_{ij} + \frac{cn}{2(n-2)} (\vdddu{i}{j}{p}{p} - \vddud{i}{p}{p}{j} - \vddud{j}{p}{p}{i} + \vdudd{p}{p}{j}{i})\\
& - \frac{c^2 n}{n-2} v_{ij} + \frac{c^2}{n-2} \vdu{p}{p} h_{ij}. \nonumber
\end{align*}
\end{proof}

Combining the results above, we obtain the main result of this section: 
\begin{theorem}
\label{thm:Lin}
Let $g(s)$ be a 1-parameter family of metrics with $h = g(0)$ a metric  with constant curvature $c$ and $v = g'(0)$. If $\frac{1}{n-2}L$ is the linearization of the 
right hand side of equation \eqref{eqn:def-mbach-3}, then 
\begin{align} \label{eqn:MBF-lin}
L v_{ij} &= -\frac{1}{2} \vdddudu{i}{j}{p}{p}{m}{m} + \frac{c(n+2)}{2} \vdddu{i}{j}{p}{p} - c \vdudu{p}{m}{m}{p} h_{ij} -c \vdudu{p}{p}{m}{m} h_{ij} \nonumber \\
&+ \frac{c - 2 \mu}{2} \vddud{i}{p}{p}{j} + \frac{c -2 \mu}{2} \vddud{j}{p}{p}{i} - \frac{c - 4\nu}{2} \vdudd{p}{p}{j}{i} + c^2 \vdu{p}{p} h_{ij} - c^2 n v_{ij}.
\end{align}
\end{theorem}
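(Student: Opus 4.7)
Since $\tfrac{1}{n-2}L$ is the total derivative at $g=h$ of the right-hand side of \eqref{eqn:def-mbach-3}, the plan is to linearize each summand using the variational formul\ae\ already established in this section, then multiply through by $n-2$. Concretely,
\begin{align*}
Lv \;=\;& D[\Delta_h \Rc]_h v + D[\delta_g^*\Delta_h\beta_h(g)]_h v + \tfrac{n-2}{2(n-1)}\,D[\delta_g^*\,dS]_h v - \tfrac{n-2}{2(n-1)}\,D[\Scal_{,ij}]_h v \\
& + 2\mu\, D[\delta_g^*\delta_h g]_h v + 2\nu\, D[\delta_g^*\,d(\tr^h g)]_h v + (n-2)\,D[Q_1+Q_2+Q_3]_h v,
\end{align*}
where the first two summands are supplied by \eqref{eqn:lin-tensor} and \eqref{eqn:lin-gauge}, the two $\delta_g^*$-corrections by the proposition immediately preceding the theorem, and the final summand by the $D[Q_1+Q_2+Q_3]_h$ proposition just above.

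\textbf{Key cancellations.} Two cancellations simplify the sum dramatically. First, $\Scal(h)=cn(n-1)$ is constant, so $\nabla\Scal$ vanishes at $h$ and \eqref{eqn:var-delstar} yields $D[\delta_g^*\,dS]_h v = \delta_h^*\, d(D[\Scal]_h v) = (D[\Scal]_h v)_{,ij}$, which equals $D[\Scal_{,ij}]_h v$ since commuting covariant derivatives past $\Scal$ produces only corrections proportional to $\nabla \Scal|_h = 0$. Hence the paired $\Scal$-Hessian contributions cancel exactly, which is precisely the design purpose of including the $\tfrac{n-2}{4(n-1)}\,dS$ term in the gauge. Second, comparing the fourth-order parts of \eqref{eqn:lin-tensor} and \eqref{eqn:lin-gauge}, the three mixed-derivative expressions $\tfrac{1}{2}\vdduddu{i}{p}{p}{j}{m}{m}$, $\tfrac{1}{2}\vdduddu{j}{p}{p}{i}{m}{m}$, and $-\tfrac{1}{2}\vdudddu{p}{p}{j}{i}{m}{m}$ in $D[\Delta_h\Rc]$ cancel against their opposite-sign counterparts in $D[\delta_g^*\Delta_h\beta_h g]$; the two $\vdudddu{}{}{}{}{}{}$ terms match after swapping $i,j$, since $\tr^h v$ is a scalar and its Hessian is symmetric. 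Only $-\tfrac{1}{2}\vdddudu{i}{j}{p}{p}{m}{m}$ survives at top order, which is the leading term of \eqref{eqn:MBF-lin}. This is the DeTurck-style principal-symbol cancellation that motivated including $\tfrac{1}{2}\Delta_h\beta_h$ in the gauge.

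\textbf{Assembly and main obstacle.} With the principal cancellations performed, assembling \eqref{eqn:MBF-lin} reduces to tallying coefficients contribution by contribution: $\vdddu{i}{j}{p}{p}$ collects $c+\tfrac{cn}{2}=\tfrac{c(n+2)}{2}$; each of the trace-$h_{ij}$ terms $\vdudu{p}{m}{m}{p}h_{ij}$ and $\vdudu{p}{p}{m}{m}h_{ij}$ totals $-c$; each of $\vddud{i}{p}{p}{j}$ and $\vddud{j}{p}{p}{i}$ picks up $\tfrac{c(n+1)}{2}-\mu-\tfrac{cn}{2}=\tfrac{c-2\mu}{2}$; $\vdudd{p}{p}{j}{i}$ acquires $-\tfrac{c(n+1)}{2}+2\nu+\tfrac{cn}{2}=-\tfrac{c-4\nu}{2}$; and the zeroth-order pieces $-c^2 n\, v_{ij}+c^2 \vdu{p}{p} h_{ij}$ come entirely from $(n-2)D[Q_1+Q_2+Q_3]_h$. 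Summing gives \eqref{eqn:MBF-lin}. The only real obstacle is the careful bookkeeping required: tracking signs across six contributions and remembering the $(n-2)$ factor that distinguishes the quadratic block from the others. No additional geometric ingredient beyond the previously computed variations is needed.
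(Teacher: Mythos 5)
Your proposal is correct and follows essentially the same route the paper takes: the paper's proof of Theorem \ref{thm:Lin} is simply ``Combining the results above,'' i.e., substitute the variational formul\ae\ of the surrounding propositions into the linearization of \eqref{eqn:def-mbach-3} and sum. Your accounting of the two designed cancellations (the exact cancellation of the $\Scal$-Hessian term against $\frac{n-2}{2(n-1)}\delta_g^* dS$, and the top-order DeTurck cancellation using the symmetry of the scalar Hessian $\vdudddu{p}{p}{i}{j}{m}{m}=\vdudddu{p}{p}{j}{i}{m}{m}$), and your coefficient tallies — including the fact that the two trace-$h_{ij}$ Hessian terms land at $-c$ only after the $(n-2)D[Q_1{+}Q_2{+}Q_3]$ contribution of $c\vdudu{p}{m}{m}{p}h_{ij}-c\vdudu{p}{p}{m}{m}h_{ij}$ is included — all check out. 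In effect you have made explicit the bookkeeping the paper leaves implicit.
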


\

We note that for general values of $\mu$ and $\nu$, the operator $L$ is not formally self-adjoint because of the term involving $\vdudu{p}{m}{m}{p} h_{ij}$. The following corollary readily follows from Theorem \ref{thm:Lin}:
\begin{cor} 
The $L^2$ adjoint $L^*$ of the operator $L$ of equation  \eqref{eqn:MBF-lin} is 
\begin{align*}
L^*v_{ij} &= -\frac{1}{2} \vdddudu{i}{j}{p}{p}{m}{m} + \frac{c(n+2)}{2} \vdddu{i}{j}{p}{p} - c \vdudd{p}{p}{i}{j}  -c \vdudu{p}{p}{m}{m} h_{ij} \nonumber \\ 
&+ \frac{c -2 \mu}{2} \vddud{i}{p}{p}{j} + \frac{c -2\mu}{2} \vddud{j}{p}{p}{i} - \frac{c - 4\nu}{2} \vdudu{m}{p}{p}{m} h_{ij} + c^2 \vdu{p}{p} h_{ij} - c^2 n v_{ij}.
\end{align*}
\end{cor}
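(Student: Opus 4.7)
The plan is to compute $L^*$ directly from the formula for $L$ in Theorem \ref{thm:Lin} by pairing $Lv$ against a compactly supported symmetric $2$-tensor $w$ in the $L^2$ inner product $\langle Lv, w\rangle = \int_M (Lv)_{ij} w^{ij}\,dV_h$, and integrating by parts to transfer all covariant derivatives from $v$ onto $w$. No boundary contributions arise. Each term on the right-hand side of \eqref{eqn:MBF-lin} is a composition of contractions, endomorphisms involving $h$ and traces, and covariant derivatives, whose $L^2$ adjoint can be computed separately and then reassembled.

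The terms split into three groups. \emph{First,} the Laplacian terms $-\tfrac{1}{2}\Delta^2 v_{ij}$ and $\tfrac{c(n+2)}{2}\Delta v_{ij}$ are manifestly formally self-adjoint, and the zeroth-order algebraic combination $c^2(\tr v)h_{ij} - c^2 n v_{ij}$ is self-adjoint because the endomorphism $v \mapsto (\tr v)h$ has the symmetric matrix $h_{ij}h^{kl}$ in bundle coordinates. \emph{Second,} the symmetric divergence pair $\tfrac{c-2\mu}{2}\bigl(\vddud{i}{p}{p}{j} + \vddud{j}{p}{p}{i}\bigr)$ and the Laplacian-of-trace piece $-c\, \vdudu{p}{p}{m}{m} h_{ij}$ are self-adjoint: for the former, two integrations by parts transfer both derivatives onto $w$ and relabeling reproduces the same symmetric pair; for the latter, the scalar $\Delta(\tr v)$ paired with $\tr w$ is symmetric by self-adjointness of $\Delta$. \emph{Third,} the genuinely asymmetric pair $-c\, v^{pm}{}_{;mp}h_{ij}$ and $-\tfrac{c-4\nu}{2}(\tr v)_{;ji}$ swaps under adjoint. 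Pairing the first against $w^{ij}$ collapses $h_{ij}w^{ij}$ to $\tr w$; two integrations by parts produce $\int -c\, v^{pm}(\tr w)_{;mp}\,dV_h$, and the symmetry of $v$ allows relabeling $(p,m)\to(i,j)$, yielding the adjoint contribution $-c(\tr w)_{;ij} = -c\,\vdudd{p}{p}{i}{j}$. The reverse calculation on $-\tfrac{c-4\nu}{2}(\tr v)_{;ji}$ produces the $-\tfrac{c-4\nu}{2}\,\vdudu{m}{p}{p}{m}h_{ij}$ term in $L^*$.

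The one potential pitfall is the appearance of curvature commutators during integration by parts, but none arise here. Derivatives are only transferred \emph{across} factors (from $v$ to $w$), never reordered on a single factor; moreover, in the expressions that appear---$(\tr w)_{;ij}$ is a Hessian on the scalar $\tr w$, while $v^{pm}{}_{;mp}$ contains two derivatives against a symmetric tensor contracted through its indices---the order of the two derivatives is immaterial, either because one is acting on a scalar or because the symmetry of $v$ absorbs any ordering ambiguity. Consequently, the formula for $L^*$ can be read off term-by-term from these transfers, yielding exactly the stated result. Comparison with $L$ then shows transparently that $L - L^*$ is concentrated on the two swapped terms, whose coefficients coincide precisely when $\nu = -c/4$, consistent with the choice of gauge parameter in \eqref{eqn:def-mbach-3.5}.
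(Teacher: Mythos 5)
Your proof is correct and uses exactly the argument the paper leaves implicit when it says the corollary "readily follows": term-by-term integration by parts to identify the adjoint of each piece of $L$, observing that the Laplacian, zeroth-order, symmetric divergence, and Laplacian-of-trace blocks are self-adjoint, while the double-divergence-times-$h$ and Hessian-of-trace terms swap (with coefficients preserved), and that the commutator obstruction vanishes because the transferred derivatives end up acting either on the scalar $\tr w$ or are rendered order-independent by relabeling dummy indices in the fully contracted, symmetric expression $v^{pm}{}_{,mp}$. The only minor point worth noting for precision: the parameter $\mu$ is irrelevant to the self-adjointness of $L$ (as your comparison at the end shows, $L=L^*$ iff $\nu=-c/4$); the additional constraint $\mu=-c(n-1)/2$ imposed in Proposition \ref{prop:L-selfadj} is what ensures that $L$ and $L^*$ preserve the splitting \eqref{eqn:split}, a separate consideration from the adjoint computation here.
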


\

We now show that if $\mu = -c(n-1)/2$ and $\nu = -c/4$, then  $L$ is formally self-adjoint.
Recalling the splitting of the bundle of symmetric $2$-tensors from \S \ref{sec:split}, we obtain the following Proposition:  
\begin{prop} \label{prop:L-selfadj}
Both $L$ and $L^*$ preserve the splitting \eqref{eqn:split}. If $\mu = -c(n-1)/2$ and $\nu = -c/4$, then the gauge vector is
\begin{align} \label{eqn:Z-final-gauge}
Z = \frac{1}{2} \Delta_h \beta_h (g)  -\frac{c(n-1)}{2} \delta_h g - \frac{c}{4} d( \tr^h g ),
\end{align}
and the linearization of equation \eqref{eqn:def-mbach-3.5} at $h$ is given by
\begin{align} \label{eqn:MBF-lin2}
Lv_{ij} &= -\frac{1}{2} \vdddudu{i}{j}{p}{p}{m}{m} + \frac{c(n+2)}{2} \vdddu{i}{j}{p}{p} - c \vdudu{p}{m}{m}{p} h_{ij} -c \vdudu{p}{p}{m}{m} h_{ij} \nonumber \\
&+ \frac{cn}{2} \vddud{i}{p}{p}{j} + \frac{cn}{2} \vddud{j}{p}{p}{i} - c\vdudd{p}{p}{j}{i} + c^2 \vdu{p}{p} h_{ij} - c^2 n v_{ij},
\end{align}
and $L$ is formally self-adjoint.  If $\ttv$ is a TT symmetric $2$-tensor and $f$ is a smooth function, then
\begin{align}
[L\ttv\, ]_{ij} &= -\frac{1}{2} \gendddudu{\ttv}{i}{j}{p}{p}{m}{m} + \frac{c(n+2)}{2} \gendddu{\ttv}{i}{j}{p}{p} - c^2 n \ttv_{ij}, \\
[L(fh)]_{ij} &= \left( -\frac{1}{2} \fdudu{p}{p}{m}{m} - \frac{c n }{2} \fdu{p}{p} \right) h_{ij}. \label{eqn:Lfh}
\end{align}
\end{prop}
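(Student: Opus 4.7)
The formula \eqref{eqn:Z-final-gauge} for the gauge vector $Z$ is read off directly from the definition \eqref{eqn:def-mbach-3.5}, which is the general flow \eqref{eqn:def-mbach-3} with $\mu = -c(n-1)/2$ and $\nu = -c/4$ inserted. These same values make the coefficients $\tfrac{c-2\mu}{2} = \tfrac{cn}{2}$ and $\tfrac{c-4\nu}{2} = c$ in the general linearization \eqref{eqn:MBF-lin} of Theorem~\ref{thm:Lin}, so substituting them produces \eqref{eqn:MBF-lin2}.

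For self-adjointness I would compare \eqref{eqn:MBF-lin2} term by term with the formula for $L^*$ from the preceding corollary, evaluated at the same $\mu$ and $\nu$. All terms match except for two pairs: $-c\,\vdudu{p}{m}{m}{p}\,h_{ij}$ in $L$ versus $-c\,\vdudu{m}{p}{p}{m}\,h_{ij}$ in $L^*$, which agree after relabeling the summation indices $p \leftrightarrow m$ and using $v^{pm} = v^{mp}$; and $-c\,\vdudd{p}{p}{j}{i}$ versus $-c\,\vdudd{p}{p}{i}{j}$, which agree because the Hessian of the scalar $\tr^h v$ is symmetric in its two derivative indices. Hence $L = L^*$.

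To obtain the restriction to TT tensors I would simply substitute $\ttv$ with $\nabla^p \ttv_{ip} \equiv 0$ and $\tr^h \ttv \equiv 0$ into \eqref{eqn:MBF-lin2}. Every term drops out except the three appearing in the claimed expression for $[L\ttv]_{ij}$---for example $\vddud{i}{p}{p}{j} = \nabla_j(\nabla^p \ttv_{ip}) = 0$ pointwise. For the pure-trace case $v = fh$ I would carry out each contraction directly: $\vddud{i}{p}{p}{j} = \vddud{j}{p}{p}{i} = f_{,ij}$, $\vdudd{p}{p}{j}{i} = n f_{,ij}$, $\vdudu{p}{m}{m}{p} = \Delta f$, $\vdudu{p}{p}{m}{m} = n\Delta f$, $\vdu{p}{p} = n f$. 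The Hessian contributions then collapse to $(\tfrac{cn}{2} + \tfrac{cn}{2} - cn) f_{,ij} = 0$---this cancellation is the algebraic identity that forces $\mu = -c(n-1)/2$---and the zeroth-order contributions $c^2 n f h_{ij} - c^2 n f h_{ij}$ cancel as well, leaving $(-\tfrac{1}{2}\Delta^2 f - \tfrac{cn}{2}\Delta f) h_{ij}$, which is \eqref{eqn:Lfh}.

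The preservation of the splitting now follows from these two formulas together with self-adjointness. They show $L$ preserves the pure-trace and TT summands of \eqref{eqn:split}. Since the decomposition is $L^2$-orthogonal and $L = L^*$, given any $v \in \mathrm{im}(K)$ one decomposes $Lv = k + p + t$ with $k \in \mathrm{im}(K)$, $p \in \{fh\}$, $t$ TT; for any $u$ in the pure-trace or TT summand, $\langle p + t, u\rangle = \langle Lv, u\rangle = \langle v, Lu\rangle = 0$ because $Lu$ lies in the same summand as $u$ and is therefore orthogonal to $v \in \mathrm{im}(K)$. Hence $p = t = 0$ and $Lv \in \mathrm{im}(K)$; the conclusion for $L^*$ is immediate. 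The principal obstacle is purely clerical---correctly parsing the many raised/lowered-index contractions and verifying the intended cancellations on the pure-trace subspace---once Theorem~\ref{thm:Lin} and its corollary are granted, there is no further analytic content.
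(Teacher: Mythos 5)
Your derivation of \eqref{eqn:Z-final-gauge} and \eqref{eqn:MBF-lin2} by substitution, and your computation of $L(fh)$ with the cancellation of the $f_{,ij}$ and $c^2 n f h_{ij}$ terms, are all correct. The term-by-term comparison of \eqref{eqn:MBF-lin2} with the corollary's formula for $L^*$ is also sound, and is in fact a bit more direct than the route the paper takes (the paper checks $L = L^*$ separately on each summand). One small misattribution: the cancellation $\tfrac{cn}{2} + \tfrac{cn}{2} - cn = 0$ in $L(fh)$ is automatic once the values of $\mu$ and $\nu$ are plugged in; what actually pins down $\mu = -c(n-1)/2$ is the requirement that the Hessian term in $L^*(fh)$ vanish, which gives $(c - 2\mu) - cn = 0$ independently of $\nu$.

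The genuine gap is in your claim that the two displayed formulas ``show $L$ preserves the pure-trace and TT summands.'' For the pure-trace summand this is immediate, since $L(fh)$ is manifestly a scalar times $h_{ij}$. For the TT summand it is not: the formula
\[
[L\ttv]_{ij} = -\tfrac12 \Delta^2\ttv_{ij} + \tfrac{c(n+2)}{2}\Delta\ttv_{ij} - c^2 n\,\ttv_{ij}
\]
is what the general expression \eqref{eqn:MBF-lin2} simplifies to when $\ttv$ is TT, but it does not by itself establish that the output $L\ttv$ is divergence-free. Since $\Delta$ and $\delta$ do not commute in general, one must actually verify $\delta(\Delta\ttv)=0$ (and likewise for $\Delta^2\ttv$), which requires commuting covariant derivatives and using the constant-curvature identities \eqref{eqn:constcurvature} together with $\delta\ttv = 0$ and $\tr\ttv = 0$. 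This is precisely the non-trivial step the paper's proof works through explicitly. Without it, your subsequent orthogonality argument for $\mathrm{im}(K)$ cannot get started, since it needs two of the three summands to be known invariant in order to conclude the third; self-adjointness plus pure-trace invariance alone only gives that $L$ preserves the full trace-free subspace $\mathrm{im}(K)\oplus \mathrm{TT}$, not the two pieces separately. Once the TT invariance is supplied, the rest of your argument goes through.
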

\begin{proof}
If $\ttv$ is TT, then (for any choices of $\mu$ and $\nu$)
\begin{align} \label{eqn:LonTT}
[L\ttv\, ]_{ij} = [L^*\ttv\, ]_{ij} = -\frac{1}{2} \gendddudu{\ttv}{i}{j}{p}{p}{m}{m}+ \frac{c(n+2)}{2} \gendddu{\ttv}{i}{j}{p}{p} - c^2 n \ttv_{ij}.
\end{align}
Moreover, 
\begin{align*}
\genddduu{\ttv}{i}{j}{p}{p}{j} &= \genddduu{\ttv}{i}{j}{p}{j}{p} + \riemuudu{p}{j}{i}{s} \ttv_{sj,p} +\riemuudu{p}{j}{j}{s} \ttv_{is,p} + \riemuudu{p}{j}{p}{s} \ttv_{ij,s} \nonumber \\
&= \genddudu{\ttv}{i}{j}{j}{p}{p} + ( \riemuudu{p}{j}{i}{s} \ttv_{sj} + \riemuudu{p}{j}{j}{s} \ttv_{is} )_{,p} \\
& +  \riemuudu{p}{j}{i}{s} \ttv_{sj,p} +\riemuudu{p}{j}{j}{s} \ttv_{is,p} + \riemuudu{p}{j}{p}{s} \ttv_{ij,s}.\nonumber
\end{align*}
It follows from the fact that that $\ttv$ is TT that the first term vanishes.  Since $h$ has constant curvature, then as a consequence of \eqref{eqn:constcurvature}, all terms in this
formula which involve the curvature also vanish. Applying this result to $\ttv_{ij}$ and $\ttv_{ij,p}^{\ \ \ \ p}$, we find that the divergence of $L\ttv$ in \eqref{eqn:LonTT} vanishes.  
Finally, the expression \eqref{eqn:LonTT} also shows that $L\ttv$ is traceless, so $L\ttv$ is TT.  It follows immediately that $L\ttv$ and $L^* \ttv$ 
are $L^2$ orthogonal to both the conformal class $[h]$ and to the range of the conformal Killing operator $K$.    We recall once again that
up until now, $\mu$ or $\nu$ remain un-prescribed. 

\
 
We now consider smooth multiples of the metric, $v_{ij} = f h_{ij}$. We calculate that
\begin{align*}
[L (fh)]_{ij} &= \left( -\frac{1}{2} \fdudu{p}{p}{m}{m} - \frac{c n }{2} \fdu{p}{p} \right) h_{ij} + \left ( (c-2\mu) - n \frac{c-4\nu}{2} \right) f_{,ij},
\end{align*}
and also 
\begin{align*}
[L^* (f h)]_{ij} &= \left( -\frac{1}{2} \fdudu{p}{p}{m}{m} + \frac{2 \nu - c (n-1)}{2} \fdu{p}{p} \right) h_{ij} + \left ( (c-2\mu) - c n \right) f_{,ij}.
\end{align*}
We next choose $\mu = -c(n-1)/2$ and $\nu = -c/4$ so that the coefficients of $f_{, ij}$ on each line vanish. Then $L$ and $L^*$ preserve the bundle 
of pure trace tensors, i.e., multiples of the metric, and $L(fh)=L^*(fh).$ Since elements in the range of $K$ and TT tensors are 
both trace free, the bundle of multiples of the metric remains $L^2$ orthogonal to the other summands in equation \eqref{eqn:split} also.

\
 
In conclusion, since multiples of the metric are $L^2$-orthogonal to the image of $K$, we have $(f h, L K \alpha ) = (L^*( fh ), K \alpha) = 0$.  
Since TT tensors are $L^2$-orthogonal to the image of $K$ we have $( \ttv, L K \alpha ) = (L^*\ttv, K \alpha) = 0$.  
Therefore, $L$ maps the range of $K$ to itself. Hence we have seen that for the given choices of $\mu$ and $\nu$, $L$ and $L^*$  preserve the 
splitting \eqref{eqn:split}, and $L$ is formally self-adjoint. 
\end{proof}

\section{Linear stability at constant curvature metrics} \label{sec:l2spec}
We now carry out the $L^2$ spectral analysis of the operator $L$ from Proposition \ref{prop:L-selfadj}. Here is the strategy.  By Proposition \ref{prop:L-selfadj} and the completeness of $M$, we have that the operator $L$ is self-adjoint, and hence its spectrum lies in $\mathbb R$. To show that the spectrum of $L$ is nonpositive,
it suffices to find a constant $a \ge 0$ such that $(v, Lv) \leq -a\, \|v\|^2_{L^2}$ if $v \in C^\infty_0$.  This estimate is verified using some involved but straightforward
arguments.  

In the following, we treat the cases $c = 0, c > 0$ and $c<0$ in order of increasing difficulty.

\bigskip

\subsection {\bf Case 1: Spectral estimate if $c=0$} 
\begin{theorem} \label{thm:main1a}
For $n \geq 4$, suppose that $(M^n,h)$ is a flat orientable manifold.  Then for any compactly supported smooth symmetric $2$-tensor $v$,
\[ 
(v,Lv) \leq 0. 
\] 
Moreover if $M$ is compact, then the kernel of $L$ is finite dimensional and consists of parallel symmetric $2$-tensors.
\end{theorem}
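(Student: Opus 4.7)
The plan is to observe that when $c=0$, the linearization $L$ from Proposition \ref{prop:L-selfadj} collapses to a multiple of the bi-Laplacian on symmetric $2$-tensors. Indeed, setting $c=0$ in equation \eqref{eqn:MBF-lin2} kills every term except the fourth-order one, and since $(M,h)$ is flat, all covariant derivatives commute, so
\begin{equation*}
[Lv]_{ij} = -\tfrac{1}{2}\, \vdddudu{i}{j}{p}{p}{m}{m} = -\tfrac{1}{2}\, \Delta_h^2 v_{ij},
\end{equation*}
where $\Delta_h$ is the rough Laplacian acting on $\Sigma^2(T^*M)$. From here the spectral estimate is a direct integration-by-parts. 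Given $v \in C_0^\infty(\Sigma^2(T^*M))$, I would integrate by parts twice to obtain
\begin{equation*}
(v, Lv)_{L^2} = -\tfrac{1}{2} \int_M \langle v, \Delta_h^2 v\rangle\, dV_h = -\tfrac{1}{2} \int_M |\Delta_h v|^2\, dV_h \leq 0,
\end{equation*}
which gives the asserted nonpositivity. No commutator terms appear, since flatness eliminates the curvature corrections that would normally obstruct such a clean manipulation.

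For the kernel statement, assume $M$ is compact and $Lv = 0$. Then $\Delta_h^2 v = 0$, so pairing against $v$ and integrating by parts (now fully justified with no boundary terms) yields $\|\Delta_h v\|_{L^2}^2 = 0$, hence $\Delta_h v = 0$. A second integration by parts gives $\|\nabla v\|_{L^2}^2 = 0$, and therefore $\nabla v \equiv 0$, i.e.\ $v$ is parallel. Conversely every parallel symmetric $2$-tensor clearly lies in $\ker L$. Finiteness of the kernel follows because a parallel tensor is determined by its value at a single point, so $\dim\ker L \le \binom{n+1}{2}$.

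The proof is essentially a one-line calculation once one recognizes the collapse of $L$ to $-\tfrac{1}{2}\Delta_h^2$, so there is really no hard step here. The only thing to verify is that the covariant derivative reorderings implicit in the expression $\vdddudu{i}{j}{p}{p}{m}{m}$ produce no curvature correction terms, which is automatic in the flat setting. The compact support of $v$ (or compactness of $M$ in the kernel analysis) suffices to justify the integration by parts.
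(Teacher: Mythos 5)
Your proof is correct and follows essentially the same route as the paper: observe that flatness and $c=0$ reduce $L$ to $-\tfrac12(\nabla^*\nabla)^2$, then integrate by parts twice to get nonpositivity and, on a compact manifold, to force $\nabla v = 0$ on the kernel. The only cosmetic difference is that the paper first obtains $\nabla^2 v = 0$ (using $(\nabla^*\nabla)^2 = (\nabla^*)^2\nabla^2$ in the flat case) before taking a trace to get $\nabla^*\nabla v = 0$, whereas you work directly with $\|\Delta_h v\|^2$; these are equivalent.
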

\begin{proof} 
The finite dimensionality is immediate since $M$ is compact.  Since $h$ is flat and since covariant derivatives commute, the expression for $L$ in Proposition \ref{prop:L-selfadj} becomes simply 
$Lv = (\nabla^* \nabla)^2 v$. 

If $Lv = 0$, then $\nabla^2 v = 0$. Then taking the trace, $\nabla^*\nabla  v = 0$, and thus finally, $(v, \nabla^* \nabla v) = \|\nabla v\|^2$, so that
$v$ is parallel. 
\end{proof}

\bigskip

\subsection {\bf Case 2: Spectral estimate if $c=1$} 

This case requires a finer analysis using a decomposition of the space of symmetric $2$-tensors.  Since the gauge-adjusted flow preserves volume, 
we may restrict to infinitesimal variations $v$ for which $\int_{M} \tr^h v \, \dvol_h = 0$.

\begin{theorem} \label{thm:main1b}
Suppose that $(M^n,h)$, $n \geq 4$, is a compact orientable manifold with constant sectional curvature $c=1$ (thus $M$ is either the sphere $S^n$ or 
a lens space $S^n/\Gamma$). Let $\pi_{\mathrm{im}(K)}$ and $ \pi_{\mathrm{TT}}$ be the $L^2$ orthogonal projections consisting of the closed subspaces 
$\mathrm{im}(K)$ and of the TT tensors. Then for any smooth symmetric $2$-tensor $v$, 
\[ 
(v,Lv) \leq -n \left( \| \pi_{\mathrm{im}(K)} v \|^2  + \| \pi_{\mathrm{TT}} v \|^2\right). 
\]
Consequently the kernel of $L$ is the $(n+1)$-dimensional space $\{v = f h,\ \Delta_{h} f + nf = 0\}$.
\end{theorem}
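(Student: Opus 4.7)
The plan rests on Proposition \ref{prop:L-selfadj}, which guarantees that $L$ preserves the $L^2$-orthogonal splitting $\mathrm{im}(K) \oplus \{fh\} \oplus \mathrm{TT}$. This reduces the quadratic form inequality to three separate summand-wise bounds, after which the global estimate follows by orthogonality. As I will observe, the two trace-free summands admit a single unified treatment, so in practice the argument splits only into a trace-free case and a pure-trace case.

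For the trace-free case, let $w$ be any smooth trace-free symmetric $2$-tensor. In formula \eqref{eqn:MBF-lin2}, every term proportional to $h_{ij}$ contributes zero when paired against $w^{ij}$ (since $h_{ij}w^{ij} = \mathrm{tr}^h w = 0$), and the terms containing $w^p{}_p$ or its derivatives vanish pointwise. The surviving terms, after integration by parts on the closed manifold and using the identity $(\nabla_j \nabla^p w_{ip}, w^{ij}) = -\|\delta w\|^2$, combine to
\[
(Lw, w) = -\tfrac{1}{2}\|\Delta w\|^2 - \tfrac{n+2}{2}\|\nabla w\|^2 - n\|\delta w\|^2 - n\|w\|^2 \leq -n\|w\|^2.
\]
A single calculation thus delivers the required spectral gap of $n$ on both $\mathrm{im}(K)$ and $\mathrm{TT}$ simultaneously.

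For the pure-trace summand, formula \eqref{eqn:Lfh} and integration by parts yield
\[
(L(fh), fh) = n\bigl[-\tfrac{1}{2}\|\Delta f\|^2 + \tfrac{n}{2}\|\nabla f\|^2\bigr].
\]
The volume-preservation identity \eqref{eqn:var-vol} restricts the analysis to functions with $\int_M f\, dV = 0$, and Obata's theorem $\lambda_1(-\Delta_{S^n}) = n$ then gives $\|\Delta f\|^2 \geq n\|\nabla f\|^2$, whence $(L(fh), fh) \leq 0$ with equality if and only if $\Delta_h f + nf = 0$. Summing the three contributions gives the stated inequality. For the kernel, $Lv = 0$ together with the strict negativity of $(Lw, w)$ for any nonzero trace-free $w$ forces both $\pi_{\mathrm{im}(K)} v$ and $\pi_{\mathrm{TT}} v$ to vanish, reducing $v$ to $fh$ with $f$ in the $(n+1)$-dimensional first eigenspace of $-\Delta_{S^n}$. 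The main (and rather modest) obstacle is simply keeping careful track of which terms of \eqref{eqn:MBF-lin2} survive under the trace-free hypothesis; no Koiso-type Bochner identity is required at this stage, in contrast to the more subtle hyperbolic setting treated later in the paper.
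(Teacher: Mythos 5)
Your proof is correct and follows essentially the same strategy as the paper: decompose $v$ via the $L$-invariant orthogonal splitting of Proposition~\ref{prop:L-selfadj} and bound each summand separately, treating $\mathrm{im}(K)$ and $\mathrm{TT}$ together through a single trace-free computation. Your identity $(Lw,w) = -\tfrac12\|\Delta w\|^2 - \tfrac{n+2}{2}\|\nabla w\|^2 - n\|\delta w\|^2 - n\|w\|^2$ for trace-free $w$, and the resulting gap $-n\|w\|^2$, is exactly what the paper obtains.

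The one place you take a different (but still valid) route is the pure-trace estimate. You quote the Lichnerowicz--Obata bound $\lambda_1(-\Delta) = n$ to obtain $\|\Delta f\|^2 \ge n\|\nabla f\|^2$ on zero-mean functions, whereas the paper avoids invoking any spectral theorem: it substitutes $\Delta^2 f = (\nabla^*)^2\nabla^2 f - (n-1)\Delta f$ and the decomposition $\nabla^2 f = (\nabla^2 f)^\circ + \tfrac1n(\Delta f)h$ to arrive at the closed form $(fh, L(fh)) = -\tfrac{n^2}{2(n-1)}\|(\nabla^2 f)^\circ\|^2$, from which nonpositivity and the equality condition $(\nabla^2 f)^\circ = 0$ are immediate. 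Your version reaches the same conclusion more quickly at the cost of citing an external eigenvalue bound. Two small points of care for your route: on the lens spaces $S^n/\Gamma$ allowed by the theorem one has $\lambda_1 \ge n$ (Lichnerowicz, or inheritance from $S^n$) rather than exact equality, which is all you actually use; and the kernel then identifies with the $\Gamma$-invariant part of the first eigenspace, which may be trivial. Neither affects the stated inequality.
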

\begin{proof}
We decompose an arbitrary element  $ v \in \Sigma^2(T^*M) $ as 
\[ 
v = K \alpha + f h + \ttv, 
\]
where $\alpha$ is a $1$-form, $f$ is a function and $\ttv$ is a transverse-traceless 2-tensor. Since $L$ preserves this orthogonal splitting, we have 
\begin{align*}
(v, Lv) = (K\alpha, L K \alpha) + (f h, L( fh)) + ( \ttv, L\ttv).
\end{align*}
Each term is estimated separately below: 

\

First, we write 
\begin{align*} 
(fh,L (fh)) =  (f, -\frac{n}{2}\Delta^2 f - \frac{n^2}{2} \Delta f) = -\frac{n}{2}(f, \Delta^2 f) + \frac{n^2}{2} ||\nabla f||^2.
\end{align*}
Next, we substitute in the identities
\[
\Delta^2 f = (\nabla^*\nabla)^2 f = (\nabla^*)^2\nabla^2 f - (n-1)\Delta f, \ \ \mbox{and}\ \ \ \nabla^2 f = (\nabla^2 f)^\circ + \frac{1}{n} \Delta f\, h
\]
to determine that
\begin{align*}
(fh, L(fh)) & = -\frac{n}{2} ||\nabla^2 f||^2 + \frac{n}{2} \left( -\frac{1}{n-1} ||\nabla^2 f||^2 + \frac{1}{n-1}||\Delta f||^2\right) \nonumber \\
& = -\frac{n^2}{2(n-1)} ||\nabla^2 f||^2 + \frac{n}{2(n-1)} ||\Delta f||^2 = - \frac{n^2}{2(n-1)} ||(\nabla^2 f)^\circ||^2 \leq 0.
\end{align*}

\

By \eqref{eqn:Lfh}, $L(fh ) = 0$ implies that $(\Delta + n)(\Delta f) = 0$; hence either $\Delta f = 0$, i.e., $f$ is constant, 
or else $\Delta f + n f$ is a constant.  Since $\int f = 0$, we conclude that $(\Delta + n)f = 0$.  Thus if $M = S^n$, then
$f$ lies in the $(n+1)$-dimensional eigenspace corresponding to the first nontrivial eigenvalue on $S^n$, while 
if $M = S^n/\Gamma$, then $f$ is identified with a $\Gamma$-invariant element of this same eigenspace on $S^n$.
For many finite groups $\Gamma \subset \mathrm{SO}(n)$, there are no $\Gamma$-invariant elements in this 
eigenspace, hence no solutions to $L(fh) = 0$ on such lens spaces. 

\

Next consider a traceless symmetric tensor $v$.  Applying \eqref{eqn:MBF-lin2} we find that
\begin{eqnarray*}
(v, Lv) &= ( v, -\tfrac12 (\nabla^*\nabla)^2 v - \frac{n+2}{2}\nabla^*\nabla v - n \delta \delta^* v - nv) \nonumber \\
&= -\frac{1}{2} \| \nabla^*\nabla v \|^2 - \frac{n+2}{2} \| \nabla v \|^2 - n \| \delta v \|^2 - n \|v\|^2.
\end{eqnarray*}
We apply this in two ways. First, if $\ttv$ is TT, then $L\ttv = 0$ implies $\ttv = 0$.  If $\ttv$ is orthogonal to the kernel of $L$, then dropping negative terms implies that $(\ttv, L\ttv) \leq - n \| \ttv \|^2$.   Second, if $v = K\alpha$ and $L v = 0$, then 
$v = K \alpha = 0$, and more generally, $(K \alpha, L K \alpha) \leq - n \| K \alpha \|^2 $.

\

This proves the spectral bound in this case.
\end{proof}
 
\bigskip

\subsection{\bf Case 3: Spectral estimate if $c=-1$}
\label{sec:spec hyp}

This case is considerably more involved than the other two.  As above, we may decompose an arbitrary compactly supported symmetric 2-tensor as
$v = K \alpha + f h + \ttv$, so that 
\begin{align} \label{eqn:split-spec-est}
(v, Lv) = (K\alpha, L K \alpha) + (f h, L( fh)) + ( \ttv, L\ttv).
\end{align}
We consider each of these terms in turn.

\

We begin with $\ttv$, which we write for the duration of this estimation as $v$ for simplicity.  Proposition \ref{prop:L-selfadj} shows that
\begin{align}
\label{eqn:ttv-spec-est2}
( v, L v) &= \int_{M} (-\frac{1}{2} v^{ij} \vdddudu{i}{j}{p}{p}{m}{m} - \frac{n+2}{2} v^{ij} \vdddu{i}{j}{p}{p} - 
n v^{ij} v_{ij}) dV_h \nonumber \\  &= -\frac{1}{2} \| \Delta v \|^2  +  \frac{n+2}{2} \| \nabla v \|^2  - n \|v\|^2.
\end{align}
Using the expression \eqref{eqn:commute-bilap} from Appendix \ref{sec:techappendix}, we find that
\begin{align*}
\nonumber -\frac{1}{2} \| \Delta v \|^2  &= -\frac{1}{2} \int_{M} v^{ij} (\vdddu{i}{j}{mp}{pm} -4 n v_{ij} + 4 \vdu{p}{p} h_{ij}  + 
(n-1) \vdddu{i}{j}{p}{p}) \, \dvol_h \\ &= -\frac{1}{2} \| \nabla^2 v\|^2 + 2 n \|v\|^2 + \frac{n-1}{2} \| \nabla v\|^2.
\end{align*}
Inserting this into equation \eqref{eqn:ttv-spec-est2}, we obtain
\begin{align} \label{eqn:int-est1}
(v, Lv) &=  -\frac{1}{2} \| \nabla^2 v\|^2   +  \frac{2n+1}{2} \| \nabla v\|^2 + n \|v\|^2.
\end{align}

\

We next rewrite the norm of the Hessian term using a generalization of Koiso's Bochner formula (see \cite{Lee}).

\begin{lemma} Using the notation from above, we define 
\begin{align}
\label{def:3-tensorT}
 T_{i j k} := v_{i j, k} - v_{j k, i}. 
\end{align} 
Then
\begin{align} \label{eqn:koiso}
\frac{1}{2} \|T\|^2 = \| \nabla v \|^2 - \| \delta v \|^2 - n \| v\|^2 + \| \tr^h v\|^2.
\end{align}
\end{lemma}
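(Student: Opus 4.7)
I will establish the identity by expanding $\|T\|^2$ pointwise, exploiting the symmetry of $v$, then integrating by parts once and commuting covariant derivatives using the explicit constant-curvature Riemann tensor with $c=-1$. The argument needs no ingredient beyond the product rule, integration by parts, and the Ricci identity on $(0,2)$-tensors; the main (modest) obstacle is consistent sign bookkeeping across the definition $(\delta v)_j = -v_{ij,}{}^i$, the Ricci identity $[\nabla_k,\nabla_l]v_{ij} = -R_{kli}{}^p v_{pj} - R_{klj}{}^p v_{ip}$, and the sign $c=-1$.

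\textbf{Pointwise expansion.} Expand $T_{ijk}T^{ijk}$ as four terms. Using $v_{ij}=v_{ji}$, each of the two ``diagonal" pieces $v_{ij,k}v^{ij,k}$ and $v_{jk,i}v^{jk,i}$ equals $|\nabla v|^2$, and a relabeling of dummy indices shows that the two ``cross" pieces coincide, contributing $-2\,v^{ij,k}v_{jk,i}$. Integrating over $M$ gives
\begin{equation*}
\tfrac{1}{2}\|T\|^2 = \|\nabla v\|^2 - \int_M v^{ij,k}v_{jk,i}\, \dvol_h,
\end{equation*}
so it suffices to show that the remaining integral equals $\|\delta v\|^2 + n\|v\|^2 - \|\tr^h v\|^2$.

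\textbf{Integration by parts and commutator.} Move the outer $\nabla^i$ off $v^{jk,i}$ and onto $v^{ij,k}$ to obtain $-\int_M v^{jk}\,\nabla^i \nabla_k v_{ij}\, \dvol_h$. Decompose $\nabla^i \nabla_k v_{ij} = \nabla_k \nabla^i v_{ij} + [\nabla^i,\nabla_k] v_{ij}$. The first summand equals $-\nabla_k(\delta v)_j$; a second integration by parts against $v^{jk}$, combined with $\nabla^k v_{jk} = -(\delta v)_j$, yields exactly $\|\delta v\|^2$. For the commutator, apply the Ricci identity on both indices of $v_{ij}$, substitute $R_{ijkl}=-(h_{il}h_{jk}-h_{ik}h_{jl})$, and contract with $h^{il}$: the index-$i$ contribution collapses (after using $h^{il}R_{lki}{}^p = (n-1)\delta^p_k$) to $-(n-1)v_{jk}$, and the index-$j$ contribution simplifies to $-v_{jk}+h_{jk}\tr^h v$, so in total $[\nabla^i,\nabla_k]v_{ij} = -n v_{jk} + h_{jk}\tr^h v$. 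Pairing this with $v^{jk}$ and integrating contributes the remaining $n\|v\|^2 - \|\tr^h v\|^2$, and summing the three pieces establishes the identity.
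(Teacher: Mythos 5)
Your proof is correct and takes essentially the same route as the paper's: expand $\|T\|^2$ using the symmetry of $v$ to reduce to the cross term $\int v^{ij,k}v_{jk,i}$, integrate by parts once, commute the covariant derivatives with the constant-curvature Ricci identity to produce $-n v_{jk}+h_{jk}\tr^h v$, and identify the remaining piece with $\|\delta v\|^2$ after a second integration by parts. The sign bookkeeping (including the paper's convention $(\delta v)_j=-v_{jk,}{}^k$ and the commutator contraction) checks out against the paper's line-by-line computation.
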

\begin{proof}
We calculate that
\begin{align*}
\frac{1}{2} \|T\|^2 &= \frac{1}{2} \int_{M} (v^{ij ,k} - v^{jk,i})(v_{ij,k} - v_{j k,i}) \, \dvol_h \nonumber \\
&= \| \nabla v \|^2 + \int_{M} v^{j k} \vdddu{i}{j}{k}{i}  \, \dvol_h  + \| \nabla v \|^2 + \int_{M} v^{j k} \vdddu{i}{j}{k}{i} \, \dvol_h.
\end{align*}
	
Commuting derivatives and using  that $h$ has constant negative curvature results in
\begin{align*}
\vdddu{i}{j}{k}{i} = \vddud{i}{j}{i}{k} + \riemdudu{k}{i}{i}{s} v_{sj} + \riemdudu{k}{i}{j}{s} v_{i s} = \vddud{i}{j}{i}{k} - n v_{jk} + \vdu{p}{p} h_{j k}, 
\end{align*}
and substituting this into the preceding expression, we are led to the following identity
\begin{align*}
\frac{1}{2} \|T\|^2 &= \| \nabla v \|^2 + \int_{M} v^{j k} (\vddud{i}{j}{i}{k} - n v_{jk} + \vdu{p}{p} h_{j k}) \, \dvol_h \nonumber \\
&= \| \nabla v \|^2 - \| \delta v \|^2 - n \| v\|^2 + \| \tr^h v\|^2.
\end{align*}
\end{proof}

Inspired by this calculation, we now define
\begin{align}
\label{def:4-tensorA}
A := \nabla T. 
\end{align} 
\begin{lemma} If $v$ is a compactly supported traceless tensor $v$, then
\begin{align*}
\|A\|^2 &=  2 \| \nabla^2 v \|^2 - 2 \| \nabla \delta v\|^2 - 4 n \| \delta v\|^2 - 2(n+2) \|\nabla v\|^2  -2 (n^2 + n) \|v\|^2.
\end{align*}
\end{lemma}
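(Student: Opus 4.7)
The plan is to expand $\|A\|^2$ directly from the definition, reduce to a single four-derivative integral $J$, and then apply a higher-order Bochner-type identity together with the original Koiso identity \eqref{eqn:koiso} to collapse everything into the four norms on the right-hand side. Writing $A_{ijkl}=T_{ijk,l}=v_{ij,kl}-v_{jk,il}$ and expanding the square, the two cross terms coincide after relabeling (using the symmetry of $v$), so
\begin{align*}
\|A\|^2 \;=\; 2\|\nabla^2 v\|^2 - 2J, \qquad J := \int_M v^{ij,kl}v_{jk,il}\, dV_h.
\end{align*}
The entire problem is therefore to compute $J$. Integrating by parts on the outermost derivative of the first factor gives $J=-\int v^{ij,k}\,\Delta(\nabla_i v_{jk})\, dV_h$, where $\Delta$ is the rough Laplacian acting on the 3-tensor $\nabla_i v_{jk}$.

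The heart of the proof is a Weitzenböck-type commutator identity: on a space of constant sectional curvature $c$,
\begin{align*}
\Delta(\nabla_i v_{jk}) \;=\; \nabla_i\Delta v_{jk} \;+\; c(n-1)\nabla_i v_{jk} \;+\; 2c\bigl[h_{ij}(\delta v)_k + h_{ik}(\delta v)_j\bigr] \;+\; 2c\bigl[\nabla_j v_{ik} + \nabla_k v_{ij}\bigr].
\end{align*}
I would prove this by commuting the three covariant derivatives in $g^{lm}\nabla_l\nabla_m\nabla_i v_{jk}$ past each other in two stages, producing two copies of curvature corrections: one from $g^{lm}[\nabla_l,\nabla_i](\nabla_m v_{jk})$ (applied as a commutator on a 3-tensor) and one from $g^{lm}\nabla_l[\nabla_m,\nabla_i]v_{jk}$ (applied on the 2-tensor $v$). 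Substituting $R_{abcd}=c(h_{ad}h_{bc}-h_{ac}h_{bd})$ and contracting with $g^{lm}$ gives a trace term $c(n-1)\nabla_i v_{jk}$ from the 3-tensor commutator, while each copy contributes a pair $c\bigl[h_{ij}(\delta v)_k+h_{ik}(\delta v)_j\bigr]$ and a pair $c\bigl[\nabla_j v_{ik}+\nabla_k v_{ij}\bigr]$ after repeatedly invoking $\nabla^a v_{ab}=-(\delta v)_b$.

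Feeding this Bochner identity into $J$ splits the integral into six pieces, each of which I would evaluate in elementary fashion. Three pieces are immediate: the pure trace term $-2c\int v^{ij,k}h_{ij}(\delta v)_k\, dV$ vanishes because $v$ is traceless; the contraction $h_{ik}v^{ij,k}=-(\delta v)^j$ converts another piece into $2c\|\delta v\|^2$; and $-2c\int v^{ij,k}\nabla_k v_{ij}\,dV = -2c\|\nabla v\|^2$. Two more pieces, involving $\int v^{ij,k}\nabla_i v_{jk}\,dV$ and $\int v^{ij,k}\nabla_j v_{ik}\,dV$, are both equal (after a relabeling swap $i\leftrightarrow j$ using $v$-symmetry) to the Koiso quantity $\int v^{ij,k}v_{jk,i}\,dV = \|\delta v\|^2 - cn\|v\|^2$. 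The one non-trivial piece, $J_1:=-\int v^{ij,k}\nabla_i\Delta v_{jk}\,dV$, I would attack by integrating by parts in $i$ and using $\nabla_i v^{ij,k}=-\nabla^k(\delta v)^j+cn\,v^{jk}$ (derived by the same commutator technique, or equivalently by raising indices in the paper's Koiso derivation); a second integration by parts in $k$ then reduces the Laplacian term to $\|\nabla\delta v\|^2$ once one uses the contracted Bochner identity $\nabla^k\Delta v_{jk}=-\Delta(\delta v)_j-c(n+1)(\delta v)_j$, which falls out by tracing the earlier formula over $g^{ik}$.

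Collecting coefficients at $c=-1$, the sums in front of $\|\delta v\|^2$, $\|\nabla v\|^2$, and $\|v\|^2$ telescope to $2n$, $n+2$, and $n^2+n$ respectively, yielding $J=\|\nabla\delta v\|^2+2n\|\delta v\|^2+(n+2)\|\nabla v\|^2+(n^2+n)\|v\|^2$ and hence the stated identity. The main obstacle is purely bookkeeping: the Bochner identity produces four distinct families of curvature corrections, and a single sign error in the commutator $[\nabla_a,\nabla_b]v_{cd}=-R_{abc}{}^{e}v_{ed}-R_{abd}{}^{e}v_{ce}$ (or in raising the last index of the Riemann tensor under the paper's convention) propagates invisibly through the six pieces and only reveals itself in the final collection of $\|\nabla v\|^2$ and $\|v\|^2$ coefficients — so the delicate part is verifying consistency with the already-proved Koiso identity at each intermediate step.
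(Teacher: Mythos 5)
Your proposal is correct, and the route is essentially the same as the paper's: expand $\|A\|^2$, reduce to the single cross integral $J$, commute covariant derivatives at constant curvature to rewrite a fourth-order expression modulo lower-order curvature corrections, and integrate by parts term by term using the traceless condition and the Koiso-type computation. The paper organizes the commutation slightly differently — it integrates by parts twice up front so that all four derivatives sit on one factor and then invokes the raw commutation formula \eqref{eqn:commut-trfree} (Lemma B.3), whereas you stop after one integration by parts and package the same curvature corrections as a Weitzenb\"ock identity for $\Delta(\nabla_i v_{jk})$ — but the underlying calculation and every intermediate quantity (e.g., $\nabla_i v^{ij,k}=-\nabla^k(\delta v)^j+cn\,v^{jk}$, $\nabla^k\Delta v_{jk}=-\Delta(\delta v)_j-c(n+1)(\delta v)_j$, and $\int v^{ij,k}v_{jk,i}\,dV=\|\delta v\|^2-cn\|v\|^2$ for traceless $v$) check out and match the paper's.
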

\begin{proof}
We first observe that
\begin{align} \label{eqn:loc-com-int}
\|A\|^2 &= \int_{M} A^{ijkm } A_{ijkm} \, \dvol_h = \int_{M} (v^{ij,km} - v^{jk,im})(v_{ij,km} - v_{j k,im}) \, \dvol_h \nonumber \\
&= 2 \| \nabla^2 v \|^2 - 2 \int_{M} v^{ij} \vdddudu{j}{k}{i}{m}{m}{k} \, \dvol_h .
\end{align}

A lengthy but straightforward calculation which involves commuting derivatives, with details appearing in Appendix \ref{sec:techappendix}, 
yields \eqref{eqn:commut-trfree}. Inserting this into equation \eqref{eqn:loc-com-int} and integrating by parts results in the equation
\begin{align*} 
\|A\|^2 &= \int_{M} A^{ijkm } A_{ijkm} \dvol_h = \int_{M} (v^{ij,km} - v^{jk,im})(v_{ij,km} - v_{j k,im}) \dvol_h \nonumber \\
&=  2 \| \nabla^2 v \|^2 - 2 \| \nabla \delta v\|^2 - 4 n \| \delta v\|^2 - 2(n+2) \|\nabla v\|^2  -2 (n^2 + n) \|v\|^2.
\end{align*}
\end{proof}

\

We now return to equation \eqref{eqn:int-est1}. If $v$ is TT, this becomes
\begin{align} \label{eqn:spec-est-0}
(v, Lv)  &= -\frac{1}{4} \| A \|^2 + \frac{(n-1)}{2} \|\nabla v\|^2 - \frac{n(n-1)}{2} \|v\|^2,
\end{align}
which by \eqref{eqn:koiso} is equivalent to
\begin{align} \label{eqn:spec-est-AT}
(v, Lv) 
&= -\frac{1}{4} ( \|A\|^2 - (n-1) \|T\|^2) .
\end{align}

\ 

It thus remains to show that $\|A\|^2 \geq (n-1) \|T\|^2$ if $v$ is TT, which we now do.
\begin{prop} \label{prop:spec-est-TT}
Using the notation from above, we have 
\begin{align*} \|A\|^2 \geq (n-1) \|T\|^2,
\end{align*}
and hence 
\begin{align} \label{eqn:lvv}
(v, Lv) \leq - \frac{(n-2)}{4} \| T \|^2.
\end{align}
The kernel of $L$ consists of TT tensors $v$ for which $T = 0$.
\end{prop}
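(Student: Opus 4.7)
The plan is to establish the inequality $\|A\|^2 \geq (n-1)\|T\|^2$ for TT tensors $v$, from which the spectral bound \eqref{eqn:lvv} follows immediately by \eqref{eqn:spec-est-AT}. The characterization of the kernel will then follow from the self-adjointness of $L$ and the refined form of the inequality.

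First, I would assemble what is already available into a clean reduction. The previous lemma computes $\|A\|^2$ in terms of $\|\nabla^2 v\|^2$, $\|\nabla v\|^2$ and $\|v\|^2$ for a TT tensor $v$, and the Koiso-type identity \eqref{eqn:koiso} simplifies (for TT $v$) to $\tfrac{1}{2}\|T\|^2 = \|\nabla v\|^2 - n\|v\|^2$. Substituting both into $\|A\|^2 - (n-1)\|T\|^2$ reduces the desired inequality to the pointwise-integrated estimate
\begin{equation*}
\|\nabla^2 v\|^2 \geq (2n+1)\,\|\nabla v\|^2 + 2n\,\|v\|^2 \qquad \text{for TT } v.
\end{equation*}

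Second, to prove this reduced estimate, the natural strategy is a \emph{higher-order Koiso argument}: construct an auxiliary $(0,4)$-tensor $B$ built from $A_{ijkl} = T_{ijk,l}$ by a suitable antisymmetrization/symmetrization, in such a way that the symmetries of $T$ (the antisymmetry $T_{ijk} = -T_{kji}$ and the cyclic identity $T_{ijk} + T_{jki} + T_{kij} = 0$, both verified directly from the definition \eqref{def:3-tensorT}) together with the TT conditions $\delta v = 0$ and $\tr^h v = 0$ cause many of the cross terms in $\|B\|^2$ to cancel or integrate away. Expanding $\|B\|^2 \geq 0$, repeatedly commuting covariant derivatives (each commutation contributing curvature corrections whose signs are fixed by $c = -1$), and integrating by parts should produce precisely $\|A\|^2 - (n-1)\|T\|^2$, plus terms that vanish because of the TT or cyclic conditions. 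The explicit form of $B$ is essentially the ``higher-order Koiso identity" advertised in the abstract.

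Third, for the kernel: if $v$ is TT and $Lv = 0$, then $(v, Lv) = 0$; combined with the strict bound $(v, Lv) \leq -\tfrac{(n-2)}{4}\|T\|^2$ (which has a positive coefficient since $n \geq 4$), this forces $T = 0$. Conversely, if $v$ is TT with $T \equiv 0$, then $A = \nabla T = 0$, so by \eqref{eqn:spec-est-AT} we have $(v, Lv) = 0$; since $L$ restricted to TT tensors is self-adjoint and nonpositive (by Proposition \ref{prop:L-selfadj} and the case-1 discussion already absorbed into this case), $(v, Lv) = 0$ places $v$ in $\ker L$.

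The main obstacle will be the algebraic/representation-theoretic step of identifying the correct tensor $B$: because the target coefficient $(n-1)$ must emerge from a precise balance between curvature contributions (all of magnitude $|c| = 1$) and combinatorial factors coming from the $(0,3)$-tensor symmetries of $T$, one cannot get by with a generic Bochner formula and must find the sharp decomposition. Once that decomposition is in hand, the remaining integration-by-parts manipulations on the constant-curvature background are routine but lengthy, and are the kind of computation the authors have already been relegating to Appendix \ref{sec:techappendix}.
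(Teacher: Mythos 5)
Your high-level plan is sound, and your preliminary reduction is carried out correctly: substituting the computed $\|A\|^2$ and the Koiso identity \eqref{eqn:koiso} (with $\delta v = 0$, $\tr^h v = 0$) into $\|A\|^2 - (n-1)\|T\|^2$ does indeed reduce the claim to the integral inequality
\[
\|\nabla^2 v\|^2 \;\geq\; (2n+1)\|\nabla v\|^2 + 2n\|v\|^2 \qquad \text{(for TT $v$)},
\]
and the kernel characterization at the end (using self-adjointness and nonpositivity of $L$ on TT tensors) is handled correctly.

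However, there is a genuine gap, and you acknowledge it yourself: the ``higher-order Koiso identity'' is precisely the content of the Proposition, and your proposal only asserts that a suitable auxiliary tensor $B$ with $\|B\|^2 \geq 0$ should exist, without producing it. Since $A = \nabla T$, the target inequality is simply $\|\nabla T\|^2 \geq (n-1)\|T\|^2$, and the paper's key move is to work with $T$ directly rather than unpacking back to $v$: because $T_{ijk}$ is skew in $(i,k)$, one regards $T$ as a $T^*M$-valued $2$-form and applies the twisted Bochner--Weitzenb\"ock identity for the coupled Hodge Laplacian $\Delta^\nabla = -(d^\nabla (d^\nabla)^* + (d^\nabla)^* d^\nabla)$. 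A curvature commutation (using that $T$ is fully traceless when $v$ is TT, that $h$ has constant curvature $c=-1$, and the cyclic identity $T_{ijk}+T_{jki}+T_{kij}=0$) gives the pointwise formula $\Delta^\nabla T = \nabla^*\nabla T + (2n-3)T$, hence
\[
\|\nabla T\|^2 - (n-1)\|T\|^2 \;=\; \tfrac12\|d^\nabla T\|^2 + \tfrac12\|(d^\nabla)^* T\|^2 + (n-2)\|T\|^2 \;\geq\; 0.
\]
In other words, the two nonnegative quantities playing the role of your $\|B\|^2$ are $\|d^\nabla T\|^2$ and $\|(d^\nabla)^* T\|^2$, and identifying $T$'s $2$-form structure is what makes the curvature terms resolve cleanly into the factor $(2n-3)$. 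Your reformulation in terms of $\nabla^2 v$, $\nabla v$, $v$ is equivalent but obscures this structure, which is why the ``search for $B$'' seems hard from that vantage; the step you flag as ``the main obstacle'' is in fact the entire content of the proof, so the proposal as written does not yet establish the inequality.
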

\begin{proof}
Using the symmetry of $v$, we obtain
\[ 
T_{ijk} =  v_{ij,k} - v_{jk,i} = v_{ij,k} - v_{kj,i}.
\]
This is skew-symmetric in $i$ and $k$; hence it may be considered as a $2$-form taking values in $T^* M$:
\[ 
T = ( T_{ijk} \; dx^i \wedge dx^k) \otimes dx^j. 
\]

\

We now consider the Hodge Laplacian acting on $T^*M$-valued $2$-forms.  We denote by 
\begin{align*}
d^\nabla: \Omega^p(T^*M) \otimes \Gamma(T^*M) \longrightarrow \Omega^{p+1}(T^*M) \otimes \Gamma(T^*M)
\end{align*}
the exterior derivative on $\Omega^*(M)$ coupled to the Levi-Civita connection on $T^* M$. We write this more explicitly only
for $p = 1, 2$.   On $2$-forms, we have
\begin{align*}
d^\nabla T &= (d^\nabla T)_{ijkl} \; dx^l \wedge dx^i \wedge dx^k \otimes dx^j, \nonumber \\
[d^\nabla T]_{ijkl} &= \frac{1}{3} ( \nabla_i T_{k j l} + \nabla_k T_{l j i} + \nabla_l T_{i j k} ) \nonumber \\ 
	&= \frac{1}{3} ( T_{kjl,i} + T_{lji,k} + T_{ijk,l} ).
\end{align*}
Likewise, on $1$-forms $\eta = \eta_{ij} dx^i \otimes dx^j$,  
\begin{align*}
d^\nabla \eta &= (d^\nabla \eta)_{ijk} \; dx^k \wedge dx^i \otimes dx^j, \nonumber \\
[d^\nabla \eta]_{ijk} &= \frac{1}{2} (\eta_{kj,i} - \eta_{ij,k}).
\end{align*} 
The associated $L^2$ inner product is 
\begin{align*}
(\eta, \zeta) = \int_{M} p! \, \eta^{k_1 \ell  k_2 \cdots k_p} \zeta_{k_1  \ell k_2 \cdots k_p} \, \dvol_h 
\end{align*}
where indices are raised using $h$.  We note that these $T^*M$-valued $p$-forms such as $\eta$ and $\zeta$, denoted e.g., $\zeta_{k_1 \ell k_2 \ldots k_p}$, are fully skew in the indices $(k_1, \ldots, k_p)$ only,
but not in the remaining index $\ell$. 
Using this inner product we may determine the adjoint differentials for these quantities. For example, if $\eta$ is a $1$-form and $\omega$ is a $2$-form,
both valued in $T^*M$, then 
\begin{align*}
(d^\nabla \eta, \omega) &:= \int 2! \, (d^\nabla \eta)_{ijk} \omega^{ijk} \, \dvol_h\\
&= \int (\eta_{kj,i} - \eta_{ij,k}) \omega^{ijk} \, \dvol_h\\
&= \int -\eta_{kj} {\omega^{ijk}}_{,i} - \eta_{ij} {\omega^{kji}}_{,k} \, \dvol_h \; \; \; \mbox{(since $\omega$ is a 2-form)} \\
&= \int 1! \; \eta_{ij} ( - 2 {\omega^{kji}}_{,k} ) \, \dvol_h\\
&= (\eta, (d^\nabla)^* \omega ),
\end{align*}
where $[(d^\nabla)^* \omega]_{ij} = - 2 {\omega_{kji,}}^k$.
	
\
	
Analogously, if $T$ is a $2$-form and $\omega$ a $3$-form, both valued in $T^*M$, then 
\begin{align*}
(d^\nabla T, \omega) &:= \int 3! \, (d^\nabla T)_{ijkl} \omega^{ijkl} \, \dvol_h \\
&= \int 2! \; ( T_{kjl,i} + T_{lji,k} + T_{ijk,l} )   \omega^{ijkl} \, \dvol_h \; \; \; \mbox{(note the cancellation of 1/3)}\\
&= \int 2! \; ( -T_{kjl}  {\omega^{ijkl}}_{,i}- T_{lji}{\omega^{kjli}}_{,k}  - T_{ijk}  {\omega^{ljik}}_{,l} ) \, \dvol_h\; \; \; \mbox{(using that $\omega$ is a 3-form)} \\
&= \int 2! \; T_{ijk} \, (-3 {\omega^{ljik}}_{,l} ) \, \dvol_h\\ 
&= ( T, (d^\nabla) ^* \omega),
\end{align*}
where $[(d^\nabla)^* \omega]_{ijk} = - 3 {\omega_{ljik,}}^l$.
	
\
	
For a $2$-form $T$, 
\begin{align*}
[(d^\nabla)^* d^\nabla T]_{ijk} = - 3 h^{ml} \nabla_m [d^\nabla T]_{ljik} = -3 [d^\nabla T]_{ljik,}^{\phantom{ljik,}l} = - {T_{ijk,l}}^l - {T_{kjl,i}}^l- {T_{lji,k}}^l,
\end{align*}  
and 
\begin{align*}
[d^\nabla (d^\nabla)^* T]_{ijk} & = \frac{1}{2} ( [(d^\nabla)^* T]_{kj,i} - [(d^\nabla)^* T]_{ij,k} ) = \frac{1}{2} ( (- 2 {T_{ljk,}}^l)_i - (- 2 {T_{lji,}}^l )_{k} )  \nonumber\\
& =  - {T^l}_{jk,li} +  {T^l}_{ji,lk}. 
\end{align*}
We can thus compute $\Delta^\nabla T := -(d^\nabla (d^\nabla)^*+  (d^\nabla)^* d^\nabla) T$, using a Bochner-Weizenb\"ock type formula for $\Delta^\nabla$ as discussed in \cite{CLN}.  

\ 

To start, we group terms as 
\begin{align*}
(\Delta^\nabla T)_{ijk} &=   {T_{ijk,l}}^l + {T_{kjl,i}}^l + {T_{lji,k}}^l + {T^l}_{jk,li} -  {T^l}_{ji,lk} \nonumber \\
&=  {T_{ijk,l}}^l + {T_{kjl,i}}^l - T_{kjl,\phantom{l}i}^{\phantom{kjl,}l} + {T_{lji,k}}^l - T_{lji,\phantom{l}k}^{\phantom{lji,}l}.
\end{align*}
Next, we commute derivatives,  and we use the fact that since $v$ is transverse-traceless, $T$ is traceless over all indices, to obtain
\begin{align*}
{T_{kjl,i}}^l - T_{kjl,\phantom{l}i}^{\phantom{kjl,}l} &= \riemdudu{i}{l}{k}{s} T_{sjl} + \riemdudu{i}{l}{j}{s} T_{ksl} + \riemdudu{i}{l}{l}{s} T_{kjs} = c(n-2) T_{ijk} - T_{kij},
\end{align*}
and
\begin{align*}
{T_{lji,k}}^l - T_{lji,\phantom{l}k}^{\phantom{lji,}l} &= \riemdudu{k}{l}{k}{s} T_{sji} + \riemdudu{k}{l}{j}{s} T_{lsi} + \riemdudu{k}{l}{i}{s} T_{ljs} = (n-2) T_{ijk} - T_{jki}.
\end{align*}
Using the fact that $T_{ijk} + T_{jki} + T_{kij} = 0$, we have 
\begin{align*}
(\Delta^\nabla T)_{ijk} &=  {T_{ijk,l}}^l + (2n-3) T_{ijk}.
\end{align*}

Consequently, 
\begin{align*}
- \|d^\nabla T\|^2 - \|(d^\nabla)^* T\|^2 = (T,\Delta^\nabla T) &= \int 2! \, ( T^{ijk} {T_{ijk,l}}^l + (2n-3) T^{ijk} T_{ijk}  )\,  \dvol_h \nonumber \\
&= - 2 \| \nabla T \|^2 + 2 (2n-3) \|T\|^2,
\end{align*}
hence
\begin{align*}
\| \nabla T \|^2 - (2n-3) \|T\|^2 = \frac{1}{2} \|d^\nabla T\|^2 + \frac{1}{2} \|(d^\nabla)^*T\|^2, 
\end{align*}
or
\begin{align*}
\| \nabla T \|^2 - (n-1) \|T\|^2  = \frac{1}{2} \|d^\nabla T\|^2 + \frac{1}{2} \|(d^\nabla) ^*T\|^2 + (n-2) \|T\|^2. 
\end{align*}
Returning to \eqref{eqn:spec-est-AT}, and using the above results we arrive at
\begin{align} \label{eqn:refined-spec-est}
(v, Lv) &= -\frac{1}{4} ( \|\nabla T\|^2 - (n-1) \|T\|^2)  \nonumber \\
&= -\frac{1}{8} ( \|d^\nabla T\|^2 + \|(d^\nabla)^*T\|^2) - \frac{(n-2)}{4} \|T\|^2.
\end{align}

\ 

Finally, if $v$ is TT and $Lv = 0$, then by \eqref{eqn:refined-spec-est}, $\| T \| = 0$.  
\end{proof}

\

We now turn to symmetric $2$-tensors of the form $v = K \alpha$.  The reader should be cautioned that from this point forward
we are no longer using the twisted Laplacian $\Delta^\nabla$, but rather the trace Laplacian $-\nabla^*\nabla$ and the Hodge
Laplacian $\Delta_H = -(d d^* + d^* d)$.  Inserting $v = K \alpha$ into equation \eqref{eqn:MBF-lin2} from  Proposition \ref{prop:L-selfadj}, 
we find that 
\begin{align}
\label{eqn:LKalpha}
L K \alpha = -\frac{1}{2} (\nabla^*\nabla )^2 K \alpha + \frac{(n+2)}{2} \nabla^*\nabla K \alpha + n K \delta K \alpha - n K \alpha.
\end{align}

\

In the next Proposition we rewrite $(K \alpha, L K\alpha)$ as a sum of nonpositive terms. In preparation, we state some useful identities.
First, we recall the standard Weitzenb\"ock identity relating the trace Laplacian and the Hodge Laplacian on $1$-forms corresponding to a hyperbolic metric 
$h$ (as usual, we alert our readers to our sign convention):
\begin{align}
\nabla^*\nabla \alpha = - \Delta_H \alpha + (n-1) \alpha. 
\label{Weiz}
\end{align}
Next, combining \eqref{Weiz} with Lemma \ref{lemma:commute-delta-delstar}, with $c = -1$, gives
\begin{align}
\nabla^* \nabla  K \alpha & =  K \nabla^*\nabla \alpha + (n+1) K \alpha,  \nonumber \\
K \nabla^* \nabla \alpha & = -K \Delta_H \alpha + (n-1) K \alpha.
\label{110}
\end{align}
Finally, on traceless symmetric $2$-tensors, the $L^2$ adjoint operator $K^*$ reduces to $\delta$, so that
\begin{align*}
K^* K \alpha = \delta ( \delta^* \alpha + \frac{1}{n} (d^* \alpha)\, h) = \delta \delta^* \alpha - \frac{1}{n} d\alpha.
\end{align*}
We readily calculate that 
\begin{align}
\delta^*\alpha = \nabla \alpha - \frac12 d\alpha \ \ \Longrightarrow \ \ \delta \delta^*\alpha = \delta \nabla \alpha - \frac12 d^* d \alpha = \nabla^*\nabla \alpha - \frac12 d^* d\alpha.
\label{inter}
\end{align}
Using \eqref{Weiz}, then $\delta \delta^* \alpha = -\frac12 \Delta_H + \frac12 d d^* + (n-1)$, and consequently we obtain
\begin{align}
K^* K \alpha = -\frac12 \Delta_H \alpha + \frac{n-2}{2n} dd^* \alpha + (n-1)\alpha.
\label{111}
\end{align}

\begin{prop} \label{prop:spec-est-imK}
For a symmetric $2$-tensor of the form $v = K \alpha$, 
\begin{align} \label{eqn:spec-est-imK}
(K\alpha, LK\alpha) & = -\frac14 ||dd^*d\alpha||^2 - \frac{n-1}{2n} ||d^* d d^*\alpha||^2 - (n-1) ||\Delta_H \alpha||^2 \nonumber \\
& \qquad - (n-1)^2 ||d\alpha||^2 - \frac{n(n-1)}{2} ||d^*\alpha||^2. 
\end{align}
In particular $ (K \alpha, L K \alpha) \leq 0$ for all $1$-forms $\alpha$.  If $L K \alpha = 0$, then $\alpha$ is a harmonic $1$-form.  Thus the kernel of $L$ on this subspace consists of $2$-tensors of the form $v = K \alpha$ for harmonic $1$-forms $\alpha$.
\end{prop}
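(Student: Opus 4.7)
The plan is to reduce $(K\alpha, LK\alpha)$ to a manifestly sign-definite sum via the commutation formulae \eqref{110}, the identity \eqref{111}, and the Weitzenb\"ock formula \eqref{Weiz}. Starting from \eqref{eqn:LKalpha} and pairing $LK\alpha$ with $K\alpha$ produces four terms. Setting $P := -\Delta_H + 2n$, the first identity in \eqref{110} combined with \eqref{Weiz} yields $\nabla^*\nabla K\alpha = KP\alpha$, and iterating gives $(\nabla^*\nabla)^2 K\alpha = KP^2\alpha$. Since $K\alpha$ is traceless, $K^* = \delta$ holds on its image, so each of the first three inner products rewrites as a pairing in $\Omega^1(M)$ between $K^*K\alpha$ (given explicitly by \eqref{111}) and a polynomial in $P$ applied to $\alpha$; in particular $(K\alpha, K\delta K\alpha) = \|K^*K\alpha\|^2$. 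After this reduction the entire expression lives in $\Omega^1(M)$.

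Next, I would apply the Hodge decomposition $\alpha = \alpha_H + d\phi + d^*\gamma$, which is preserved by $dd^*$, $d^*d$, and $\Delta_H$, and therefore by $P$ and $K^*K$. The identity \eqref{eqn:spec-est-imK} can then be verified component by component. On the harmonic part all five summands on the right vanish, and the coefficient on the left reduces to $(n-1)[-2n^2 + n(n+2) + n(n-1) - n] = 0$; this same computation also shows $LK\alpha_H = 0$ whenever $\alpha_H$ is harmonic. On the coexact part $\alpha_C = d^*\gamma$, where $d^*\alpha_C = 0$ and $dd^*\alpha_C = 0$, setting $\mu := d^*d\alpha_C$ and $\zeta := d\alpha_C$ reduces the computation to tracking $\|\mu\|^2 = \|\Delta_H\alpha_C\|^2$, $\|d\mu\|^2 = \|dd^*d\alpha_C\|^2$, $\|\zeta\|^2$, and $\|\alpha_C\|^2$; the $\|\alpha_C\|^2$ coefficient again cancels, and the remaining three reproduce $-\tfrac{1}{4}\|dd^*d\alpha_C\|^2 - (n-1)\|\Delta_H\alpha_C\|^2 - (n-1)^2\|d\alpha_C\|^2$. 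A parallel calculation on the exact part $\alpha_E = d\phi$, using $\nu := dd^*\alpha_E = -\Delta_H\alpha_E$, contributes the remaining summands $-\tfrac{n-1}{2n}\|d^*dd^*\alpha_E\|^2 - (n-1)\|\Delta_H\alpha_E\|^2 - \tfrac{n(n-1)}{2}\|d^*\alpha_E\|^2$. Summing over the three components yields \eqref{eqn:spec-est-imK}.

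Nonpositivity of $(K\alpha, LK\alpha)$ is then immediate, since every coefficient on the right of \eqref{eqn:spec-est-imK} is nonpositive. For the kernel characterization, self-adjointness combined with nonpositivity implies that $LK\alpha = 0$ is equivalent to $(K\alpha, LK\alpha) = 0$, which in turn forces each summand on the right to vanish; in particular $\|\Delta_H\alpha\|^2 = \|d\alpha\|^2 = \|d^*\alpha\|^2 = 0$, so $\alpha$ is harmonic, and conversely every harmonic $\alpha$ lies in the kernel by the harmonic-part computation above. The main obstacle is combinatorial rather than conceptual: one must carefully manage the cross terms in $(K^*K\alpha, P^2\alpha)$ and verify that the four coefficient cancellations occur on each Hodge component. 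No geometric identity beyond \eqref{110}, \eqref{111}, and \eqref{Weiz} is required.
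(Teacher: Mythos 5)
Your proposal is correct, and it uses the same three ingredients the paper uses -- the identity \eqref{eqn:LKalpha}, the commutation relations \eqref{110}, and the explicit formula \eqref{111} for $K^*K$ -- but organizes the computation in a genuinely different way. The paper pairs $K\alpha$ against $LK\alpha$, substitutes \eqref{110} twice to reach the expression \eqref{112}, expands it as the sum $I + II + III + IV$ via \eqref{111}, and then brute-force collects like terms (coefficients of $\|\Delta_H\alpha\|^2$, $\|dd^*\alpha\|^2$, $\|d\alpha\|^2$, $\|d^*\alpha\|^2$, $\|\alpha\|^2$). You instead introduce $P := -\Delta_H + 2n$, reduce the whole quantity to the scalar quadratic form $(K^*K\alpha, (-\tfrac12 P^2 + \tfrac{n+2}{2}P - n)\alpha) + n\|K^*K\alpha\|^2$, observe that both $K^*K$ and $P$ are polynomials in $dd^*$ and $d^*d$ and hence preserve the $L^2$ Hodge--Kodaira decomposition $\alpha = \alpha_H + \alpha_E + \alpha_C$, and verify the identity on each summand. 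I checked your three component computations: the $\|\alpha\|^2$ coefficient vanishes identically on each Hodge type, the harmonic part contributes zero, the coexact part yields exactly $-\tfrac14\|dd^*d\alpha_C\|^2 - (n-1)\|\Delta_H\alpha_C\|^2 - (n-1)^2\|d\alpha_C\|^2$, and the exact part yields $-\tfrac{n-1}{2n}\|d^*dd^*\alpha_E\|^2 - (n-1)\|\Delta_H\alpha_E\|^2 - \tfrac{n(n-1)}{2}\|d^*\alpha_E\|^2$; summing over types reproduces \eqref{eqn:spec-est-imK} since each norm on the right-hand side depends on only one Hodge type. What your route buys is transparency: the miraculous cancellation of the $\|\alpha\|^2$ coefficient in the paper's $I+II+III+IV$ bookkeeping is revealed as three independent single-line identities, one per Hodge component, and the harmonic-part calculation gives for free the converse statement that $LK\alpha_H = 0$. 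The cost is that you implicitly invoke the $L^2$ Kodaira decomposition, which on $\mathbb H^n$ (and on compact hyperbolic manifolds) is available but is a slightly heavier foundational tool than the paper needs. One small point: the sentence appealing to self-adjointness and nonpositivity to assert $LK\alpha = 0 \Leftrightarrow (K\alpha, LK\alpha) = 0$ is more than you need -- for the kernel characterization only the trivial forward implication $LK\alpha = 0 \Rightarrow (K\alpha, LK\alpha) = 0$ is used, after which vanishing of $\|d\alpha\|^2$ and $\|d^*\alpha\|^2$ follows directly from \eqref{eqn:spec-est-imK}.
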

\begin{proof}
First, it follows from equation \eqref{eqn:LKalpha} that
\begin{align}
 (K \alpha, L K\alpha) &= (K\alpha, -\frac{1}{2} (\nabla^* \nabla)^2 K\alpha + \frac{n+2}{2} \nabla^*\nabla K\alpha + n K \delta K\alpha -  n K\alpha) \nonumber \\
& = -\frac12 ||\nabla^*\nabla K\alpha||^2  + \frac{n+2}{2} (\nabla^*\nabla K\alpha, K\alpha) + n ||K^* K\alpha||^2 - n ||K\alpha||^2 \\ 
& = -\frac12 ||K \nabla^*\nabla \alpha||^2 - \frac{n}{2} ( \nabla^* \nabla \alpha, K^* K\alpha) + n ||K^* K\alpha||^2 - \frac{n-1}{2} ||K\alpha||^2,
\label{112} 
\end{align}
where the last equality relies on \eqref{110} and some algebraic simplification. 

\ 

It follows from the second part of equation \eqref{110} that 
\begin{align*}
-\frac12 \| K \nabla^*\nabla \alpha \|^2  & = -\frac12 \| K \Delta_H \alpha - (n-1) K \alpha \|^2 \\ 
&=-\frac12 (K^*K \Delta_H \alpha, \Delta_H \alpha) + (n-1)  (K^* K \alpha, \Delta_H \alpha) -\frac{(n-1)^2}{2} || K \alpha||^2,
\end{align*}
and
\begin{align*}
-\frac{n}{2}( \nabla^*\nabla \alpha, K^* K\alpha) = \frac{n}{2} ( \Delta_H \alpha, K^* K\alpha) - \frac{n(n-1)}{2} ||K\alpha||^2;
\end{align*}
hence inserting these into \eqref{112} results in
\begin{align*}
 (K \alpha, L K\alpha) =-\frac12 ( K^*K \Delta_H \alpha, \Delta_H \alpha) + \frac{3n-2}{2}( \Delta_H \alpha, K^* K\alpha) + n ||K^*K\alpha||^2 - n(n-1) ||K\alpha||^2.
\end{align*}
Writing the four terms on the right-hand side as $I + II + III + IV$, we shall expand each, using \eqref{111}, and then collect like terms in the sum at the end. We define 
\begin{align*}
I & := -\frac12 (  (-\tfrac12\Delta_H + \frac{n-2}{2n} dd^* + (n-1)) \Delta_H \alpha, \Delta_H \alpha) \\
& = \frac14 (\Delta^2_H \alpha, \Delta_H \alpha) - \frac{n-2}{4n}( dd^* \Delta_H \alpha, \Delta_H \alpha) - \frac{n-1}{2} ||\Delta_H \alpha||^2 \\
& = -\frac{1}{4} ||d d^* d \alpha||^2 - \frac{n-1}{2n} ||d^* d d^*\alpha||^2 - \frac{n-1}{2} ||\Delta_H \alpha||^2; 
\end{align*}
\begin{align*}
II & := \frac{3n-2}{2} (\Delta_H \alpha, -\tfrac12 \Delta_H \alpha + \frac{n-2}{2n} dd*\alpha + (n-1)\alpha) \\
& = -\frac{3n-2}{4}||\Delta_H \alpha||^2 - \frac{(n-2)(3n-2)}{4n} ||dd^* \alpha||^2 - \frac{(3n-2)(n-1)}{2}( ||d\alpha||^2 + ||d^* \alpha||^2);
\end{align*}
\begin{align*}
III & := n || -\tfrac12 \Delta_H \alpha + \frac{n-2}{2n}dd^*\alpha + (n-1)\alpha ||^2 \\
& =  \frac{n}{4} ||\Delta_H \alpha||^2 + \frac{(n-2)^2}{4n} ||dd^* \alpha||^2 + n(n-1)^2 ||\alpha||^2 \\
& \qquad - \frac{n-2}{2} (\Delta_H \alpha, dd^* \alpha) - n(n-1) (\Delta_H \alpha, \alpha) + (n-1)(n-2) (dd^* \alpha, \alpha) \\
& = \frac{n}{4} ||\Delta_H \alpha||^2 + \frac{ (n-2)(3n-2)}{4n} ||dd^*\alpha||^2  \\
& \qquad + n(n-1) ||d\alpha||^2 + 2(n-1)^2 ||d^*\alpha||^2 + n(n-1)^2 ||\alpha||^2;
\end{align*}
and
\begin{align*}
IV &:=  -n(n-1)( -\tfrac12 \Delta_H \alpha + \frac{n-2}{2n} dd^*\alpha + (n-1)\alpha, \alpha)  \\
& = -\frac{n(n-1)}{2} ||d\alpha||^2 - (n-1)^2 ||d^* \alpha||^2 - n(n-1)^2 ||\alpha||^2.
\end{align*}

Collecting all of these terms, we find that
\begin{align*}
(K\alpha, LK\alpha) & = -\frac14 ||dd^*d\alpha||^2 - \frac{n-1}{2n} ||d^* d d^*\alpha||^2 - (n-1) ||\Delta_H \alpha||^2 \\ 
& \qquad - (n-1)^2 ||d\alpha||^2 - \frac{n(n-1)}{2} ||d^*\alpha||^2. 
\end{align*}

This proves that $(K \alpha, L K \alpha) \leq 0$.   If $L v = 0$ with $v = K \alpha$, then each summand vanishes separately; hence 
$d\alpha = d^* \alpha = 0$, and consequently $\alpha$ is an $L^2$ harmonic $1$-form.  
\end{proof}

\

To complete the spectral analysis for the linearized gauge-adjusted Bach operator for the case in which $c=-1$, we discuss separately the two cases in which the hyperbolic 
manifold is either compact, or else is all of $\mathbb H^n$. We then show how it implies a spectral bound on Poincar\'e-Einstein spaces that are perturbations of $\mathbb H^n$.

\medskip

\noindent{\bf Spectral bounds on compact hyperbolic manifolds:}
\begin{theorem} \label{thm:main1c}
Suppose that $(M^n, h)$, $n \geq 4$, is a compact hyperbolic manifold. Using the notation as above, if $v$ is any symmetric $2$-tensor, then 
\begin{equation*}
(v, Lv) \leq 0.
\end{equation*}
Moreover, there is a spectral gap at $0$ and the $L^2$ kernel of $L$ consists of the following two types of symmetric 2-tensors:
\begin{itemize}
\item $v = K \alpha$ where $\alpha$ is a harmonic $1$-form.  The dimension of this portion of the nullspace equals $\dim H^1(M, \mathbb R)$.
\item Non-trivial tensors $v$ such that $T = 0$; i.e., $v_{ij,k} - v_{jk,i} = 0$. These are the infinitesimal conformally flat deformations of the
hyperbolic structure; cf.\ \cite{Lafontaine}.
\end{itemize}
\end{theorem}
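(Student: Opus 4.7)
The plan is to assemble the summand-wise bounds already established in the previous two propositions. By Proposition \ref{prop:L-selfadj}, the operator $L$ preserves the orthogonal decomposition \eqref{eqn:split}, so for an arbitrary smooth symmetric $2$-tensor
$$v = K\alpha + fh + \ttv,$$
the quadratic form splits as
$$(v, Lv) = (K\alpha, LK\alpha) + (fh, L(fh)) + (\ttv, L\ttv),$$
and I would bound each summand separately.

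The $K\alpha$ piece is controlled directly by Proposition \ref{prop:spec-est-imK}: formula \eqref{eqn:spec-est-imK} is manifestly a sum of nonpositive terms vanishing iff $d\alpha = 0$ and $d^*\alpha = 0$, i.e., iff $\alpha$ is an $L^2$ harmonic $1$-form. The TT piece is controlled by Proposition \ref{prop:spec-est-TT} in the sharpened form \eqref{eqn:refined-spec-est}, which gives $(\ttv, L\ttv) \leq -\tfrac{n-2}{4}\|T\|^2 \leq 0$ with equality iff $T_{ijk} := \ttv_{ij,k} - \ttv_{jk,i} \equiv 0$. The pure-trace piece I would handle by a short direct computation starting from \eqref{eqn:Lfh} with $c = -1$: one has $[L(fh)]_{ij} = (-\tfrac12 \Delta^2 f + \tfrac{n}{2} \Delta f)\, h_{ij}$, and pairing with $fh$ and integrating by parts on the closed manifold yields
$$(fh, L(fh)) = -\tfrac{n}{2}\|\Delta f\|^2 - \tfrac{n^2}{2}\|\nabla f\|^2 \leq 0,$$
with equality iff $f$ is constant. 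Since $fh$ with $f$ constant automatically satisfies $v_{ij,k} - v_{jk,i} = 0$ (by parallelism of $h$), it is subsumed into the Codazzi-type class.

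Summing these three bounds gives $(v, Lv) \leq 0$, establishing linear stability. For the kernel, the condition $Lv = 0$ forces each of the three projections of $v$ to lie separately in the corresponding null locus: $K\alpha$ for harmonic $\alpha$, a constant multiple of $h$, and a TT tensor annihilated by $T$. Packaged together, the kernel consists of images under $K$ of harmonic $1$-forms, which is $\dim H^1(M,\mathbb{R})$-dimensional since $K$ is injective on harmonic $1$-forms (a closed hyperbolic manifold admits no nonzero conformal Killing forms, by a standard Bochner argument using negativity of the Ricci curvature), together with the Codazzi-type tensors (TT Codazzi tensors plus constant multiples of $h$); the former are the infinitesimal conformally flat deformations of the hyperbolic structure, by \cite{Lafontaine}. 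The spectral gap follows because $L$ is a self-adjoint fourth-order elliptic operator on a closed manifold, hence has purely discrete spectrum accumulating only at $-\infty$, so its finite-dimensional kernel is separated from the next eigenvalue by a positive gap. The bulk of the technical work is already contained in the higher-order Koiso-type Bochner identity underpinning Proposition \ref{prop:spec-est-TT} and the $d d^*, d^* d$ expansion behind Proposition \ref{prop:spec-est-imK}; the main remaining subtlety here is the kernel identification, particularly recognizing the TT Codazzi tensors geometrically as infinitesimal conformally flat deformations.
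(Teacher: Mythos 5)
Your proposal follows the paper's proof almost verbatim: the same splitting $v = K\alpha + fh + \ttv$, the same appeal to Proposition \ref{prop:spec-est-imK} for the $\mathrm{im}(K)$ piece and Proposition \ref{prop:spec-est-TT} for the TT piece, the same direct computation for the pure-trace piece, and the same discreteness-of-spectrum argument for the gap. Two small points of divergence are worth noting. First, your expression $(fh, L(fh)) = -\tfrac{n}{2}\|\Delta f\|^2 - \tfrac{n^2}{2}\|\nabla f\|^2$ has the correct coefficient on the gradient term (the paper's displayed formula $-\tfrac{n}{2}(\|\Delta f\|^2 + \|\nabla f\|^2)$ contains a typo, though it does not affect the conclusion). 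Second, the paper disposes of the constant-$f$ contribution to the kernel by invoking the volume-preservation constraint $\int_M \tr^h v = 0$ (imposed in \S 4.2), whereas you observe that a constant multiple of $h$ is parallel, hence has $T = 0$, and fold it into the Codazzi class; the paper's route is arguably more faithful to Lafontaine's notion of infinitesimal conformally flat deformation, which is volume-normalized, but both are legitimate, and your added Bochner-type justification of the injectivity of $K$ on harmonic $1$-forms (needed for the $\dim H^1(M,\mathbb{R})$ count) is a welcome detail the paper leaves implicit.
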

\begin{proof}[Proof of Theorem \ref{thm:main1c}] 
It follows from the compactness of $M$ that  the spectrum of $L$ is discrete, which shows that there is a spectral gap at $0$.  If we write $v = K \alpha + f h + \ttv$, then we obtain
\begin{align*} 
(v, Lv) = (K\alpha, L K \alpha) + (f h, L(fh)) + ( \ttv, L\ttv).
\end{align*}
By Proposition \ref{prop:spec-est-imK},  $(K\alpha, LK\alpha) \leq 0$ and it vanishes precisely if $\alpha$ is harmonic.  Next, 
Proposition \ref{prop:spec-est-TT} shows that $( \ttv, L\ttv) \leq 0$, and this term vanishes precisely if $T = 0$.   Finally,
if $v = f h$, then by \eqref{eqn:Lfh}, 
\begin{align*} 
(f h, L(fh)) &=  \int_{M}  n f \left( -\frac{1}{2} \Delta^2 f + \frac{n}{2} \Delta f\right)\, \dvol_h 
= -\frac{n}{2} \left( \| \Delta f \|^2 +  \|\nabla f\|^2\right) \leq 0.
\end{align*}
This quantity vanishes if and only if $f$ is constant, but since we only consider volume-preserving variations, the kernel in this component is trivial.
\end{proof}

\medskip

\noindent{\bf Spectral bounds on hyperbolic space:}
Here we suppose that $(M, h)$ is the standard hyperbolic space $\mathbb H^n$.  Our main result is the following
\begin{theorem} \label{thm:main1nc} There is a constant $a(n) > 0$ such that if $v$ is any $\mathcal C^\infty_0$ symmetric $2$-tensor on $\mathbb H^n$ with $n \geq 4$, then 
\begin{equation}
(v, Lv) \leq -a(n) \|v\|^2.
\label{spbound}
\end{equation}
In particular, the $L^2$ kernel of $L$ is trivial. 
\end{theorem}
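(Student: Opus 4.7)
The plan is to use the $L^2$-orthogonal decomposition of $v$ from \S\ref{sec:split}. Writing $v = K\alpha + fh + \ttv$, and invoking Proposition \ref{prop:L-selfadj} to see that $L$ respects this splitting, we have
\[
(v,Lv) = (K\alpha, LK\alpha) + (fh, L(fh)) + (\ttv, L\ttv), \qquad \|v\|^2 = \|K\alpha\|^2 + \|fh\|^2 + \|\ttv\|^2.
\]
It therefore suffices to prove a uniform coercive estimate $(\cdot, L\cdot) \leq -a_i\|\cdot\|^2$ on each summand; then $a(n) = \min(a_0,a_1,a_2)$ gives \eqref{spbound} and the triviality of the $L^2$-kernel is immediate.

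For the pure-trace piece, the computation already used in Theorem \ref{thm:main1c} yields $(fh, L(fh)) = -\tfrac{n}{2}\|\Delta f\|^2 - \tfrac{n^2}{2}\|\nabla f\|^2$, and the classical Poincar\'e inequality $\|\nabla f\|^2 \geq \tfrac{(n-1)^2}{4}\|f\|^2$ on $\mathbb H^n$ produces $(fh, L(fh)) \leq -\tfrac{n^2(n-1)^2}{8}\|f\|^2$. For the image-of-$K$ piece, Proposition \ref{prop:spec-est-imK} controls $(K\alpha, LK\alpha)$ by a negative combination of $\|\Delta_H\alpha\|^2$, $\|d\alpha\|^2$ and $\|d^*\alpha\|^2$, while \eqref{111} yields
\[
\|K\alpha\|^2 = \tfrac12\|d\alpha\|^2 + \tfrac{n-1}{n}\|d^*\alpha\|^2 + (n-1)\|\alpha\|^2.
\]
The $\|d\alpha\|^2$ and $\|d^*\alpha\|^2$ terms are absorbed directly; to dominate $(n-1)\|\alpha\|^2$ one uses that $\mathbb H^n$ carries no $L^2$ harmonic $1$-forms and that the bottom $\lambda_0 > 0$ of the $L^2$-spectrum of $-\Delta_H$ on $1$-forms is strictly positive, giving $\|\Delta_H\alpha\|^2 \geq \lambda_0^2\|\alpha\|^2$ and hence $(K\alpha, LK\alpha) \leq -a_1 \|K\alpha\|^2$ for an explicit $a_1 > 0$.

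The TT piece is the crux. By Proposition \ref{prop:spec-est-TT}, $(\ttv, L\ttv) \leq -\tfrac{n-2}{4}\|T\|^2$, and Koiso's identity \eqref{eqn:koiso} applied to TT $\ttv$ gives $\|T\|^2 = 2\|\nabla\ttv\|^2 - 2n\|\ttv\|^2$. The desired coercive estimate $\|T\|^2 \geq c\|\ttv\|^2$ is therefore equivalent to the spectral bound $\|\nabla\ttv\|^2 \geq (n + c/2)\|\ttv\|^2$ for all compactly supported TT tensors $\ttv$ on $\mathbb H^n$, i.e.\ the bottom of the $L^2$-spectrum of $\nabla^*\nabla$ restricted to TT tensors must strictly exceed $n$.

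The main obstacle is establishing this last gap. Unlike the compact hyperbolic case, where Codazzi-type TT tensors generate an $L^2$-kernel, one expects there to be no such tensors in $L^2(\mathbb H^n)$, but this must be proved quantitatively. The natural strategy is indicial-root analysis at the conformal infinity of $\mathbb H^n$ (the computation carried out in Appendix \ref{sec:appendix}): the fourth-order operator $L$ is uniformly degenerate on the compactification of $\mathbb H^n$, and if every indicial root on the TT subbundle lies strictly outside the $L^2$ weight window at $0$, then $0$ is not an $L^2$ eigenvalue and self-adjointness with Fredholm index zero forces the quantitative gap. This step produces an explicit positive constant in the TT sector and, after taking the minimum over the three sectors, yields $a(n)>0$ and the stated bound \eqref{spbound}.
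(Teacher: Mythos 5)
Your overall plan is exactly the paper's: use the orthogonal splitting $v = K\alpha + fh + \ttv$, the $L$-invariance of the splitting from Proposition \ref{prop:L-selfadj}, and a coercive estimate on each factor, then take the minimum of the three constants. Your treatment of the pure-trace piece (using McKean's Poincar\'e inequality) and of the $\mathrm{im}(K)$ piece (using Proposition \ref{prop:spec-est-imK} together with the positive bottom of the $L^2$ spectrum of the Hodge Laplacian on $1$-forms on $\mathbb H^n$) agree with the paper in substance. You also correctly identify that the crux is a strict spectral gap for $\nabla^*\nabla$ restricted to TT tensors, namely that its bottom must strictly exceed $n$.

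Where your proposal has a genuine gap is precisely at that TT step. The paper closes it by simply citing a known result of Delay (\cite[Remark 6.2]{Delay}): the $L^2$ spectrum of $\nabla^*\nabla$ on TT symmetric $2$-tensors over $\mathbb H^n$ is the ray $[\tfrac{(n-1)^2}{4}+2,\infty)$, so $\|\nabla\ttv\|^2 \ge (\tfrac{(n-1)^2}{4}+2)\|\ttv\|^2$, which exceeds $n\|\ttv\|^2$ for $n\ge 4$ and yields the explicit constant $\tfrac{(n-2)(n-3)^2}{8}$. Your substitute route --- indicial-root analysis at the conformal boundary plus a Fredholm index-zero argument --- is not actually carried out, and as stated it does not deliver what you need. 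The indicial analysis in Appendix \ref{sec:appendix} is for the fourth-order operator $L$ (indeed for $\lambda - L$), not for $\nabla^*\nabla$ acting on the TT subbundle; one would have to redo that analysis for the second-order operator $\nabla^*\nabla$ restricted to TT tensors to locate its essential spectrum. Moreover ``Fredholm of index zero with trivial $L^2$ kernel'' gives invertibility on the relevant weighted spaces, but it does not by itself produce a quantitative lower bound on the bottom of the $L^2$ spectrum of $\nabla^*\nabla$; converting Fredholm invertibility on a weight interval into a sharp spectral threshold requires additional work (essentially re-deriving the content of Delay's computation). So you have identified the right intermediate inequality but not a complete argument for it. A concrete fix is either to quote the explicit spectrum of $\nabla^*\nabla$ on TT tensors, or to establish it directly via a Lichnerowicz--Bochner formula and decomposition of TT tensors into representation types on $\mathbb H^n$.

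One more small note: your coefficient $-\tfrac{n^2}{2}\|\nabla f\|^2$ in the pure-trace computation is the correct value from \eqref{eqn:Lfh}; the displayed formula \eqref{eqn:hyp-est-fnc} in the paper appears to have dropped a factor of $n$, though this does not affect the sign or the logic.
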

\begin{proof}
We write $v = fh + \ttv + K \alpha$ as above so that $(v, Lv) = (fh, L(fh)) + (\ttv, L \ttv) + (K\alpha, L K \alpha)$, and handle these three terms in turn.

\ 

First, if $v = f h$, then by \eqref{eqn:Lfh}
\begin{align} \label{eqn:hyp-est-fnc}
(f h, L(fh)) = 
-\frac{n}{2} \| \Delta f \|^2 - \frac{n}{2} \|\nabla f\|^2 \leq 0.
\end{align}
Thus $L( fh) = 0$ implies that $\nabla f = 0$, i.e., $f$ is constant, hence $f$ does not lie in $L^2$.  The inequality \eqref{spbound} for this scalar component follows from the well-known estimate
$\| \nabla f \|^2 \ge \frac{(n-1)^2}{4} \|f\|^2$ for $L^2$ functions on $\mathbb H^n$ (see for example \cite{McKean}).

\ 

Next, by Proposition \ref{prop:spec-est-TT}, $( \ttv, L\ttv) \leq 0$, and if $L \ttv = 0$ for some $L^2$ TT tensor $\ttv$, then $T = 0$. We claim that in fact $\ttv = 0$. 
Indeed, by \cite[Remark 6.2]{Delay}, the spectrum of $\nabla^*\nabla$ on TT tensors is purely absolutely continuous and occupies the entire ray $[ \frac{(n-1)^2}{4} + 2,\infty)$; hence we have
\begin{align} \label{eqn:hyp-lap-spec-est}
(\tfrac14 (n-1)^2 + 2) ||\ttv||^2 \leq \| \nabla \ttv \|^2. 
\end{align}
Now if $T = 0$ then $A = \nabla T = 0$ as well, so that \eqref{eqn:spec-est-0} becomes
\begin{align*}
(\ttv, L\ttv) \geq \left( \frac12 (n-1) ( \frac{(n-1)^2}{4} + 2) - \frac{n(n-1)}{2}\right) ||\ttv||^2 = \frac{ (n-1)(n-3)^2}{8} ||\ttv||^2.
\end{align*}
This contradicts the inequality $(\ttv, L\ttv) \leq 0$ if $n \geq 4$ unless $\ttv = 0$, hence the $L^2$ kernel of $L$ on TT tensors is trivial. 

\

Regarding the spectral bound, rewrite \eqref{eqn:hyp-lap-spec-est} as
\begin{align*}
- \|\nabla \ttv\|^2 \leq - \left( \frac{(n-1)^2}{4} + 2  \right) \|\ttv\|^2.
\end{align*}
Taking this together with \eqref{eqn:lvv} and \eqref{eqn:koiso}, we see that
\begin{align} \label{eqn:fin-est-tt}
(\ttv,L\ttv) \leq \frac{-(n-2)}{4} \|T\|^2 &= \frac{(n-2)}{2} \left( -\| \nabla \ttv\|^2 +  n \|\ttv\|^2 \right) \leq-\frac{(n-2)(n-3)^2}{8} \|\ttv\|^2.
\end{align}

\ 

Finally, Proposition \ref{prop:spec-est-imK} shows that $(K\alpha, L K \alpha) \leq 0$ and any element in the $L^2$ kernel in this component is of the form $K \alpha$ 
where $\alpha$ is an $L^2$ harmonic $1$-form.  However, it is well-known that there are no such $1$-forms on $\mathbb H^n$ if $n \geq 3$, so this component also has no $L^2$ kernel.  
We now observe, using the bound $\|d\alpha\|^2 + \|d^* \alpha\|^2 \geq \tfrac14 (n-3)^2 \|\alpha\|^2$ for $L^2$ $1$-forms on hyperbolic space,   that
\begin{align*}
\|K\alpha\|^2 & = ( -\tfrac12 \Delta_H \alpha + \frac{n-2}{2n} dd^* \alpha + (n-1)\alpha, \alpha) = \frac12 \|d\alpha\|^2 + \frac{n-1}{n}\|d^*\alpha\|^2 + (n-1)\|\alpha\|^2 \\ 
& \leq 5(n-1) ( \|d\alpha\|^2 + \|d^*\alpha\|^2).
\end{align*}
Finally, the main estimate of Proposition \ref{prop:spec-est-imK} shows that
\[
(K\alpha, L K\alpha) \leq -\frac{n(n-1)}{2} ( \|d\alpha\|^2 + \|d^*\alpha\|^2  )\leq - \frac{n}{10} \|K\alpha\|^2,
\]
which is the desired estimate if $v = K\alpha$. 

\ 

This completes the proof, taking $a(n) = \min \{ n(n-1)^2/8, \,n/10,\, (n-2)(n-3)^2/8\}.$  Note that $a(n) \geq \frac{1}{4}$ for  $n \geq 4$. 
\end{proof}

\subsection{Spectral bounds on Poincar\'e-Einstein space}
\label{sec:spec bds PE}

\

We conclude this section with the spectral bounds and linear stability for the operator on Poincar\'e-Einstein spaces which are small perturbations of hyperbolic space.  We recall that a metric $g$ on a compact manifold with boundary
$M$ is called conformally compact if it takes the form $g = \rho^{-2} \bar{g}$, where $\rho$ is a defining function for $\del M$ and $\bar{g}$ is a smooth metric up to $\del M$. The prototype is
the hyperbolic metric on the ball. The space $(M,g)$ is called Poincar\'e-Einstein if $g$ is conformally compact and Einstein, so that $\mathrm{Ric}\, g = -(n-1) g$.  Any Poincar\'e-Einstein metric
has a `boundary value' called its conformal infinity. This is the conformal class $\mathfrak c(g) = [ \rho^2 g|_{T \del M}]$ (note that since $\rho$ and $\bar{g}$ are only defined up to a nonvanishing
scalar factor, this boundary value is also defined only up to such a factor, or in other words, only its conformal class is well-defined). A  now classical result by Graham and Lee \cite{GL} 
states that any conformal class $\mathfrak c$ sufficiently close to the standard round conformal class $\mathfrak c_0$ on $S^{n-1}$ is the conformal infinity of a unique Poincar\'e-Einstein
metric $g$ which is uniformly close to the hyperbolic metric.

\begin{cor}
Let $g$ be a Poincar\'e-Einstein metric on the ball $\mathbb{B}^n$ which is sufficiently uniformly close to the hyperbolic metric $h$ as stated above, and with conformal infinity $\mathfrak c(g)$ any sufficiently
smooth conformal class near to the round conformal class on $S^{n-1}$. Then there exists a constant $a(n,\epsilon) > 0$ such that
\[
(v, Lv) \leq -a(n,\epsilon) ||v||^2
\]
for any $L^2$ symmetric $2$-tensor on $M$, where $L$ is the linearized gauge-adjusted Bach operator computed at the Poincar\'e-Einstein metric $g$. 
\label{cor1}
\end{cor}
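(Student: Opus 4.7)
The approach is a perturbation argument built directly on Theorem \ref{thm:main1nc}. The first step is to set up $L_g$: choose the background metric in the Bianchi gauge to be $g$ itself, so that the Einstein condition $\Ric_g = -(n-1) g$ makes $g$ simultaneously Bach flat and a zero of $\beta_g$, hence a stationary point of the gauge-adjusted flow. The resulting linearization $L_g$ is self-adjoint on $L^2(g)$ by the same algebraic argument as Proposition \ref{prop:L-selfadj}. Next, decompose $\riem_g = -\tfrac{1}{2}(g\owedge g) + \W_g$ and insert this into the linearization computed in Section \ref{sec:linearize}, writing $L_g = L_g^{\mathrm{cc}} + \mathcal{W}_g$, where $L_g^{\mathrm{cc}}$ has exactly the formula \eqref{eqn:MBF-lin2} with $c = -1$ (now interpreted with covariant derivatives and traces taken in $g$), and $\mathcal{W}_g$ is a lower-order correction depending linearly on $\W_g$ and its first two covariant derivatives.

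The key observation is that the proofs of Propositions \ref{prop:spec-est-TT} and \ref{prop:spec-est-imK}, along with the scalar calculation \eqref{eqn:Lfh}, used the constancy of sectional curvature only through commutation identities that remain valid for any Einstein metric with $\Ric_g = -(n-1) g$ (the Weyl part of the Riemann tensor cancels on traceless symmetric tensors in precisely the places where it would appear). Consequently, the entire chain of estimates in Theorem \ref{thm:main1nc} applies to $L_g^{\mathrm{cc}}$ verbatim, provided the three scalar spectral gaps it invokes --- the $\tfrac{(n-1)^2}{4}$-gap for $-\Delta$ on $L^2$ functions, the $\tfrac{(n-3)^2}{4}$ gap for $d+d^*$ on $L^2$ $1$-forms, and the $\tfrac{(n-1)^2}{4}+2$ lower bound for $\nabla^*\nabla$ on TT tensors --- continue to hold on $(M,g)$ with constants degraded by at most $\delta(\epsilon) \to 0$. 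These three gaps are robust under the perturbation because the essential spectra of geometric $0$-operators depend continuously on the conformal infinity, and the Graham--Lee correspondence \cite{GL} ensures $g$ itself depends smoothly on $\mathfrak c(g)$. Combined with an integration by parts estimate $|(v,\mathcal{W}_g v)_g| \le C\epsilon\bigl(\|\nabla_g v\|_g^2 + \|v\|_g^2\bigr)$ obtained from $\|\W_g\|_{C^1(g)} \le C'\epsilon$, and absorbing the $\|\nabla_g v\|_g^2$ piece into the (strictly negative) first-order quadratic forms that appear with room to spare in \eqref{eqn:fin-est-tt} and the analogous scalar/$K\alpha$ bounds, this yields
\[
(v, L_g v)_g \le -\bigl(a(n) - \delta(\epsilon) - C'\epsilon\bigr)\|v\|_g^2,
\]
which is the claim with $a(n,\epsilon) := a(n) - \delta(\epsilon) - C'\epsilon > 0$ for $\epsilon$ small.

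The main obstacle is proving the robustness of the three spectral gaps under this non-compact perturbation from $h$ to $g$. The bottoms of the essential spectra of the scalar Laplacian, the Hodge Laplacian on $1$-forms, and the rough Laplacian on TT tensors are each determined by the conformal infinity, so they shift continuously with $\mathfrak c(g)$; what must be excluded is the emergence of small $L^2$ eigenvalues from the interior region as $g$ deforms. For $g$ sufficiently close to $h$ in a weighted $C^k$ norm, this can be handled by a resolvent comparison on appropriate weighted Sobolev spaces combined with a Rellich-type compactness argument, or equivalently via the Mazzeo--Melrose $0$-calculus which already controls the discrete spectrum of these operators under asymptotically hyperbolic perturbations. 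The same argument additionally rules out $L^2$ harmonic $1$-forms on $(M,g)$, so the kernel of $L_g$ remains trivial on this perturbed background.
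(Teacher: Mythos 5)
Your overall strategy coincides with the paper's: treat $L_g$ as a small perturbation of the operator on $\mathbb H^n$ and argue that the strictly negative spectral bound of Theorem \ref{thm:main1nc} survives. The paper's own proof is a single sentence to that effect, simply asserting that $L$ and the $L^2$ inner product depend smoothly on $g$, so a small change in $g$ produces a small change in the spectral bound. Your version makes that continuity quantitative by decomposing $L_g = L_g^{\mathrm{cc}} + \mathcal{W}_g$, estimating $\mathcal{W}_g$ by $\epsilon$, and (usefully) pointing out that one must still worry about both the drift of the essential spectrum and the possible emergence of small $L^2$ eigenvalues from the interior -- a real subtlety that the paper's one-liner glosses over.

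However, the pivotal claim in the middle -- that ``the Weyl part of the Riemann tensor cancels on traceless symmetric tensors in precisely the places where it would appear,'' so that the constant-curvature estimates of \S\ref{sec:spec hyp} apply \emph{verbatim} to $L_g^{\mathrm{cc}}$ for any Einstein metric -- is false. Already the Koiso identity \eqref{eqn:koiso} fails: its derivation commutes derivatives via $\vdddu{i}{j}{k}{i} = \vddud{i}{j}{i}{k} + \riemdudu{k}{i}{i}{s} v_{sj} + \riemdudu{k}{i}{j}{s} v_{is}$, and while the first curvature term reduces to the Ricci contraction (controlled by the Einstein condition), the second produces $\W_{k\ j}^{\ i\ s} v_{is}$, which does not vanish for a traceless or even TT tensor $v$ when $\W_g \ne 0$. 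The same issue appears throughout the TT computation (e.g.\ \eqref{eqn:commut-trfree}, \eqref{eqn:commute-bilap}) and in the $K\alpha$ computation through Lemma \ref{lemma:commute-delta-delstar}. Were your cancellation claim true, the spectral bound for $L_g^{\mathrm{cc}}$ would hold for \emph{every} Einstein metric of negative scalar curvature, with no smallness hypothesis on $\W_g$ -- a much stronger statement, and not what is being proved. The fix is straightforward: each such commutation picks up a term quadratic in $v$ (or $\nabla v$) weighted by $\W_g$ and its derivatives, and these are all $O(\epsilon)(\|\nabla_g v\|^2 + \|v\|^2)$ under the hypothesis $\|\W_g\|_{C^1} \le C'\epsilon$, so they can simply be absorbed into your error term $\mathcal{W}_g$ along with the Weyl pieces coming directly from the formula for $L_g$. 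You should also note that the orthogonal decomposition $v = K_g\alpha + f g + \ttv$ must be taken with respect to $g$, and the constants in its validity degrade continuously with $\epsilon$, but that is handled by the same robustness-of-spectral-gaps argument you already invoke in the final paragraph.
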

This follows immediately since the coefficients of $L$ and the $L^2$ inner product depend smoothly on the metric $g$, so small changes in the metric in a $\mathcal C^{k,\alpha}$ norm with respect to $h$ for sufficiently large $k$ result in only small changes in this spectral bound. 

\section{Dynamical Stability of the Bach flow} \label{sec:dynamical}
In this section we discuss one way to pass from linear stability to dynamical stability for the gauge-adjusted Bach flow on $\mathbb H^n$. This
method has been discussed in detail elsewhere for Ricci flow (see \cite{GIK} and \cite{Knopf}), as well as for the ambient obstruction flow, \cite{BGIM}, 
so we restrict ourselves to sketching the main points here.

\

First, as we recalled at the end of the last section, the well-known result of Graham and Lee \cite{GL} states that there is an infinite dimensional family of Poincar\'e-Einstein metrics 
which arise as deformations of the hyperbolic metric. These are parametrized by their conformal infinities, i.e., conformal classes on $S^{n-1}$ 
near to the standard one. Each such metric is a fixed point of the Bach flow.  The existence of this family means that we should only expect a dynamical stability result for
the hyperbolic metric amongst metrics which decay asymptotically to $h$, and hence all share the same conformal infinity. We thus consider metrics $g$ such that $g-h$
lies in some weighted (little) H\"older space, as in \cite{BGIM2, BGIM}.

\

Following \cite{BGIM2}, let $M$ be the open unit ball $\mathbb{B}^n$ with linear coordinates $x = (x^1, \ldots, x^n)$, and write $\rho = \frac{1-|x|^2}{2}$.  
The hyperbolic metric thus takes the form $h = \rho^{-2} |dx|^2$.  We consider the space $\mathcal C^{k,\alpha}(M, \Sigma^2(T^*M),h)$ consisting of 
limits with respect to the H\"older norm of order $(k+\alpha)$, $\|\cdot\|_{k+\alpha}$ of smooth compactly supported symmetric $2$-tensors with respect to $h$.  We also define the weighted space
\begin{equation}
\label{wghtd norm def}
\mathcal C^{k,\alpha}_{\mu}(M, \Sigma^2(T^*M), h) = \rho^{\mu} \mathcal C^{k,\alpha}(M,\Sigma^2(T^*M), h),
\end{equation}
and observe that if $g$ is a metric such that $g - h \in \mathcal C^{k,\alpha}_{\mu}(M,h)$ for some $\mu > 0$, then the conformal infinity of $g$ is the
standard one.   For simplicity, we omit explicit mention of the tensor bundle in the notation below. 

\

Write the gauge-adjusted Bach flow \eqref{eqn:def-mbach-3.5} as $\partial g/\partial t = F(g)$. Thus $F(h) = 0$, and $DF_h = L$ is
the operator in Proposition \ref{prop:L-selfadj} (with $c=-1$).  This linearization $L$ is a fourth-order uniformly degenerate elliptic operator, and
is admissible in the sense of \cite[Definition 2.0]{BGIM2}; thus 
\[
L: \rho^{\mu} \mathcal C^{k+4,\alpha}(M,h) \longrightarrow \rho^{\mu} \mathcal C^{k,\alpha}(M,h)
\]
is bounded.  Assuming that $||g-h||_{\mathcal C^{k+4,\alpha}_\mu}$ is sufficiently small, we can expand $F$ in a Taylor series as 
\begin{align*}
F( g ) = F( h + v ) = F(h) + L v + Q_h v,
\end{align*}
where $v = g-h$ and $Q_h$ is a polynomial in $h$, $h^{-1}$ and up to three covariant derivatives of $v$, with $Q_h$ vanishing quadratically as $v \to 0$.  
It is straightforward to check that 
\begin{align*}
F: \rho^{\mu} \mathcal C^{4+k,\alpha}(M,h) \longrightarrow \rho^{\mu} \mathcal C^{k,\alpha}(M,h)
\end{align*}
is a $\mathcal C^1$ map between these two Banach spaces.   Furthermore, referring to \cite{BGIM2} again, we determine that $L$ is sectorial on $\mathcal C^{k,\alpha}_{\mu}(M,h)$
if $\mu > 0$. Since sectoriality is an open condition, $DF_g$ is sectorial for $g$ sufficiently close to $h$ as well. 

\

To apply the asymptotic stability result in Lunardi's book \cite{Lunardi}, we must bound the spectrum of $L$ acting on these weighted H\"older spaces.  In other words,
we must transfer what we know about the $L^2$ spectral bounds to, effectively, bounds for the weighted H\"older spectrum. 
We have shown in Theorem \ref{thm:main1nc} that the $L^2$ spectrum of $L$ at $h$ is a ray $(-\infty, -b]$ for some constant $b \geq a(n) > 0$. 

We now explain how, using the structure of the Green function for $L$, we can obtain a suitable bound for the $\mathcal C^{k,\alpha}_\mu$ spectrum. 
This same argument has been applied in \cite{BGIM2}. 

\renewcommand{\Re}{\mathrm{Re} \; }

\begin{prop}
\label{prop:bdresolvent}
Fix any weight $\mu \in (\frac{n-1}{2}-r(n),\frac{n-1}{2}+r(n))$, where $r(n)>0$ is a constant depending on $n$.  Then there exists a constant $a(n)$ such that the resolvent set for $L$ acting on $\mathcal C^{k,\alpha}_\mu$
is contained in a half-plane $\Re \lambda > -a(n)$. Thus 
\[ 
\mathrm{spec}_{\mathcal C^{k,\alpha}_\mu(M, h)}(L) \subset \{\lambda \in \mathbb C: \Re \lambda \leq -a(n) \}.
\]
\end{prop}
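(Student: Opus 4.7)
The plan is to use the Mazzeo $0$-calculus of uniformly degenerate elliptic operators on the conformally compact manifold $(\mathbb B^n, h)$ in order to transfer the $L^2$ spectral bound of Theorem \ref{thm:main1nc} to the weighted little Hölder spaces $\mathcal C^{k,\alpha}_\mu$. The operator $L$ has the structure of a fourth-order uniformly degenerate elliptic operator: when written in boundary-adapted coordinates near $\partial \mathbb B^n$, its coefficients are polynomials in the degenerate vector fields $\rho \partial_i$, so its mapping properties on $\rho^\mu$-weighted spaces are governed by the normal operator at $\partial \mathbb B^n$ and the roots of its indicial polynomial.

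The first step is to compute the indicial roots of $L - \lambda I$, i.e., the complex exponents $\sigma$ for which the normal operator annihilates $\rho^\sigma \xi$ for some constant tensor $\xi$. These depend holomorphically on $\lambda$, and the explicit computation, using the splitting of Section \ref{sec:split} to reduce to scalar eigenvalue problems on each summand, is carried out in Appendix \ref{sec:appendix}. Because $L$ is formally self-adjoint with respect to the $L^2$ inner product associated to the hyperbolic volume (which corresponds to the critical weight $\rho^{(n-1)/2}$), the indicial roots at $\lambda = 0$ are real and symmetric about $\frac{n-1}{2}$. The constant $r(n) > 0$ is chosen so that the open interval $(\frac{n-1}{2} - r(n), \frac{n-1}{2} + r(n))$ contains no indicial root of $L$, and by continuity in $\lambda$ and after possibly shrinking $a(n) > 0$, this interval remains free of indicial roots of $L - \lambda I$ for every $\lambda$ in the half-plane $\Re \lambda > -a(n)$.

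The second step is to invoke the Fredholm theorem of the $0$-calculus: whenever $\mu$ avoids every $\Re \sigma_j(\lambda)$, the map
\[
L - \lambda I : \rho^\mu \mathcal C^{k+4, \alpha}(M, h) \longrightarrow \rho^\mu \mathcal C^{k, \alpha}(M, h)
\]
is Fredholm of index zero, and every element of its kernel is polyhomogeneous at $\partial \mathbb B^n$ with leading exponents among the indicial roots whose real part exceeds $\mu$. Since $\mu > \frac{n-1}{2} - r(n)$ and the next available indicial root has real part at least $\frac{n-1}{2} + r(n)$, any such kernel element $u$ decays fast enough at infinity to lie in $L^2(\mathbb H^n, h)$. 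Theorem \ref{thm:main1nc} then forces $u \equiv 0$ as long as $\Re \lambda > -a(n)$. Surjectivity now follows either from index zero or from the same injectivity argument applied to the formal adjoint $L^* = L$ on the dual weighted space, and hence $L - \lambda I$ is a Banach space isomorphism for every $\lambda$ with $\Re \lambda > -a(n)$, which is the assertion.

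The main obstacle is the indicial-root bookkeeping deferred to Appendix \ref{sec:appendix}: for a fourth-order tensor-valued operator, the indicial roots appear separately on each summand of \eqref{eqn:split}, and one must verify both that the window $(\frac{n-1}{2} - r(n), \frac{n-1}{2} + r(n))$ is nonempty and that no root wanders into it as $\lambda$ ranges over the resolvent half-plane. The explicit constants $r(n) = (n-1)/8$ and $a(n) = (n-2)(3n-11)(5n-13)/128$ quoted in Remark \ref{rem:ind-poly} reflect the involved algebra needed to pin these down, and are not claimed to be sharp.
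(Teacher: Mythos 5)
Your argument reaches the right conclusion and shares its foundations with the paper -- both rest entirely on the indicial-root computation of Appendix~\ref{sec:appendix} and the $L^2$ spectral bound of Theorem~\ref{thm:main1nc} -- but the route is genuinely different. The paper does not invoke the Fredholm machinery of the $0$-calculus on weighted H\"older spaces at all. Instead, it starts from the known $L^2$ resolvent $R_\lambda(L) = (\lambda - L)^{-1}$, treats its Schwartz kernel as a polyhomogeneous conormal distribution on the $0$-double space $(\mathbb{B}^n)^2_0$, reads off pointwise decay of rate $\rho^{(n-1)/2 + \sigma}$ (respectively $\tilde\rho^{(n-1)/2 + \sigma}$) at the left and right faces from the indicial roots, and then deduces boundedness of $R_\lambda(L)$ on $\rho^\mu \mathcal C^{k,\alpha}$ directly by a weighted Schur-type estimate, requiring $\frac{n-1}{2} - \sigma < \mu < \frac{n-1}{2} + \sigma$ for convergence of the integral and uniform boundedness. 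You instead argue that $L - \lambda$ is Fredholm on the weighted H\"older scale, show injectivity by upgrading kernel elements to polyhomogeneous expansions whose leading exponent lands beyond $\frac{n-1}{2} + r(n)$ (hence in $L^2$), apply the $L^2$ injectivity, and obtain surjectivity from the same argument for $L^* = L$ at the dual weight. This works, but it requires more machinery than the paper uses: you need the Fredholm theorem in the H\"older (rather than $L^2$-Sobolev) setting, the polyhomogeneity theorem for kernel elements, and a separate argument (you assert but do not justify) that the Fredholm index is zero throughout the window -- the index depends on which inter-indicial-root gap contains $\mu$, and you need to pin it to the gap containing the self-adjoint weight $\frac{n-1}{2}$. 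The paper's route buys directness: it needs only boundedness of one explicitly controlled kernel. Your route buys abstraction: it would generalize without change to settings where one does not have a priori control of the resolvent kernel. Both approaches equally require the uniform-in-$\Im\lambda$ statement that the indicial roots stay out of the window as $|\Im \lambda| \to \infty$, which you correctly flag as the nontrivial bookkeeping deferred to the appendix; mere pointwise continuity in $\lambda$, which you cite first, is not enough.
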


\

We note for example that the values $r(n) = \frac{n-1}{8}$ and $a(n)=\frac{(n-2)(3n-11)(5n-13)}{128}$ suffice (see Remark \ref{rem:ind-poly}).

\

\begin{proof}
If $\lambda \not\in \mathrm{spec}_{L^2} (L)$, then $\lambda - L: L^2 \to L^2$ has a bounded inverse.  Consider the Schwartz kernel of $R_{\lambda}(L) = 
(\lambda - L)^{-1}$ as a distribution on $M \times M$.  Following \cite{Ma-edge}, this Schwartz kernel is best understood as a polyhomogeneous distribution on $(\mathbb{B}^n)^2_0$, which is by definition the blowup 
of the product space $(\mathbb{B}^n)^2$ along the boundary of its diagonal. This blowup is called the $0$-double space of the ball, and is obtained by replacing each
point of the boundary of the diagonal of $(\mathbb{B}^n)^2$ with the set of inward-pointing unit normal vectors.  The entire boundary of the diagonal $\del \mathrm{diag}((\mathbb{B}^n)^2)$
is replaced by its inward-pointing spherical normal bundle.  This process creates a new front face, and the diagonal of $(\mathbb{B}^n)^2$
lifts to this new space as a submanifold whose normal bundle has a uniform product structure up to the boundary.  The resolvent $R_\lambda(L)$
is an element of the $0$-calculus of pseudodifferential operators, which means that its Schwartz kernel has a classical pseudodifferential singularity of order $-2$ along
this lifted diagonal, and has asymptotic expansions up to all boundary faces. These polyhomogeneous expansions are described in terms
of their index sets, which are the collection of exponents appearing as powers of the defining functions for these faces in the expansions. 
The mapping properties of $R_\lambda(L)$ are determined solely by these features (i.e., the singularity along the diagonal, and the orders
of vanishing or blowup of the Schwartz kernel at the various boundary faces). We refer to \cite{Ma-edge} for more careful descriptions of
all of this, and simply quote the relevant mapping properties needed here. 

\

The expansions of $R_\lambda(L)$ at the `left' and `right' faces (these are the lifts of $(\partial \mathbb{B}^n) \times \mathbb{B}^n$ and $\mathbb{B}^n \times (\partial \mathbb{B}^n)$, 
respectively) are determined by the {\it indicial roots} of $\lambda - L$. As explained carefully in Appendix A below, these indicial roots are the values $\gamma$ such that there exists
a symmetric $2$-tensor $\kappa$ which is smooth up to $\partial \mathbb{B}^n$ for which
\[
(\lambda - L) \rho^{\gamma} ( \rho^{-2}k) = \mathcal O( \rho^{\gamma+1}).
\]
The extra factor of $\rho^{-2}$ is included so that the norm of $\rho^{-2}\kappa$ with respect to $h$ is uniformly bounded (and smooth) up to $\partial B^n$.
This is an algebraic condition dependng only on the leading coefficients of the operator $\lambda - L$, and only involves the value of $\kappa$ at each 
boundary point. 
The calculation of these indicial roots is technical, and relegated to Appendix \ref{sec:appendix}.   There are sixteen separate indicial roots (these are independent of the point 
$p \in \partial \mathbb{B}^n$) which are convenient to write as eight sets of pairs, $\gamma_i^{\pm} = \gamma_i^{\pm}(\lambda)$, $i = 1, \ldots,  8$.  Each pair 
is symmetrically arranged around the midpoint $(n-1)/2$ (which is the weight $\gamma_0$ such that $\rho^{\gamma_0}$ just barely lies outside of $L^2(\dvol_h)$).   
As we explain in Appendix \ref{sec:appendix}, if $\lambda$ lies in a right half-plane $\{\lambda: \Re \lambda > -a(n)\}$ contained in the $L^2$ resolvent set (the value 
$a(n) = (n-2)(3n-11)(5n-13)/128$ will do), then the real parts of the roots satisfy the ordering
\[
\Re \gamma_8^-(\lambda) \leq \cdots  \leq  \Re \gamma_1^-(\lambda) <  \frac{n-1}{2} < \Re \gamma_1^+(\lambda) \leq \cdots \leq \Re \gamma_8^+(\lambda).
\]

\

We now define 
\[ 
\sigma := \inf \{\Re \gamma_1^+(\lambda): \Re \lambda > -a(n)\} . 
\]
In Proposition \ref{prop:ind-poly} we show that $\sigma \geq \frac{1}{4}.$   It then follows, see \cite{Ma-edge}, that the Schwartz kernel satisfies the pointwise bounds
\begin{align*}
|R(\lambda, \rho, y, \tilde{\rho}, \tilde{y})| &\leq C_{\lambda} \rho^{\frac{n-1}{2}+\sigma}, \mbox{as } \rho\to 0, \mbox{for }  \tilde{\rho} \geq c > 0, \\
|R(\lambda, \rho, y, \tilde{\rho}, \tilde{y})| &\leq C'_{\lambda} \tilde{\rho}^{\frac{n-1}{2}+\sigma}, \mbox{as } \tilde{\rho}\to 0,\mbox{for } \rho \geq c' > 0.
\end{align*}
In fact, the result is slightly sharper, inasmuch as it shows that this Schwartz kernel vanishes at this stated rate $(n-1)/2 + \sigma$ uniformly along the
left and right faces, even up to the front face. 

These bounds may then be used to show that given any $\lambda$ with $\Re \lambda > -a(n)$, there exists a range of weights $\mu$ (depending on $\lambda)$
such that 
\begin{align*}
R_{\lambda}(L): \rho^{\mu} \mathcal C^{k,\alpha}(\mathbb{B}^n, h) \to \rho^{\mu} \mathcal C^{k,\alpha}(\mathbb{B}^n, h) 
\end{align*}
is bounded.   (Of course, the range lies in the smaller space $\rho^{\mu} \mathcal C^{k+4,\alpha}$.)    Conjugating to remove the weights, this assertion is equivalent to the 
boundedness of the mapping
\begin{align*}
u(\rho, y) \longmapsto \int (\tilde{\rho}/\rho)^{\mu} R(\lambda, \rho, y, \tilde{\rho}, \tilde{y})  u(\tilde{\rho}, \tilde{y}) \, \dvol_h
\end{align*}
on $\mathcal C^{k,\alpha}(B^n, h)$, where $\dvol_h$ is comparable to $d\tilde{\rho} \ d\tilde{y}/\tilde{\rho}^{n}$.  This last boundedness statement follows, in turn, from
elliptic regularity provided we can show it if $k = \alpha = 0$.   There are two conditions that must be met: first, the condition $\mu > \frac{n-1}{2} - \sigma$ 
means that the integral converges as $\tilde{\rho} \to 0$, while the second condition $\mu < \frac{n-1}{2} + \sigma$ implies that this expression is uniformly
bounded as $\rho \to 0$. 

We refer the reader to \cite{BGIM2} for more details on this argument in an almost identical  setting. 
\end{proof}

By a similar sort of perturbation result as in Corollary \ref{cor1}, we obtain the analogous result for Poincar\'e-Einstein spaces: 
\begin{cor}
There exists an $\vep > 0$ such that if $(M,g)$ is a Poincar\'e-Einstein space, where $M = \mathbb{B}^n$, $||g-h||_{\mathcal C^{4,\alpha}} < \vep$ and $\mathfrak c(g)$ is smooth, then for any weight $\mu$ in the same interval $(\frac{n-1}{2} - r(n), \frac{n-1}{2} + r(n))$,
there exists a constant $a(n)'$ depending on $\vep$ and $\mu$ such that 
\[
\mathrm{spec}_{\mathcal C^{k,\alpha}_\mu} (M,g) \, (L) \subset \{ \lambda \in \mathbb C:  \Re \lambda \leq -a(n)' \}.
\]
\label{cor2}
\end{cor}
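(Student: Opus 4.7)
The plan is to mimic the proof of Proposition \ref{prop:bdresolvent} for the linearized gauge-adjusted Bach operator $L_g$ at the Poincar\'e-Einstein metric $g$, using Corollary \ref{cor1} in place of Theorem \ref{thm:main1nc}. Corollary \ref{cor1} already supplies an $L^2$ spectral bound $\mathrm{spec}_{L^2}(L_g) \subset (-\infty,-a(n,\vep)]$ as soon as $\|g-h\|_{\mathcal C^{4,\alpha}}$ is small enough; the remaining work is to transfer this to a weighted H\"older resolvent bound on the same interval of weights.

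The key point is that $L_g$ remains a fourth-order admissible uniformly degenerate elliptic operator in the sense of \cite{BGIM2}, and that its indicial operator at each boundary point of $\mathbb B^n$ coincides with that of $L_h$. Indeed, both metrics are conformally compact with the same asymptotic hyperbolic model and satisfy $\Ric = -(n-1)g$ to leading order at $\del \mathbb B^n$, which is all that enters the calculation of the indicial roots carried out in Appendix \ref{sec:appendix}. Consequently the entire list of indicial roots $\gamma_i^\pm(\lambda)$, and in particular the uniform gap $\sigma \geq 1/4$ from Proposition \ref{prop:ind-poly}, is identical for $L_g$ and $L_h$. The weighted spaces $\rho^\mu \mathcal C^{k,\alpha}$ defined from the boundary defining function $\rho$ are also equivalent for $g$ and $h$ when these two metrics are close. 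With these inputs, the $0$-calculus parametrix construction of \cite{Ma-edge} applies verbatim: for $\Re\lambda > -a(n)'$ lying in the $L^2$ resolvent set, the Schwartz kernel of $(\lambda - L_g)^{-1}$ is polyhomogeneous on $(\mathbb B^n)^2_0$ with the same index sets at the left, right and front faces as before, yielding boundedness on $\rho^\mu \mathcal C^{k,\alpha}$ for every $\mu$ in the prescribed interval.

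The principal obstacle is tracking the dependence of $a(n)'$ on $\vep$ and confirming that uniformity in $\mu$ survives the perturbation. A clean way to finesse this, which avoids redoing the parametrix from scratch, is to exploit the resolvent identity
\begin{equation*}
R_\lambda(L_g) = R_\lambda(L_h) + R_\lambda(L_h)\,(L_g - L_h)\,R_\lambda(L_g),
\end{equation*}
combined with a Neumann-series argument. Since $L_g - L_h$ has coefficients of size $\mathcal O(\vep)$ in the natural $0$-weighted norms and is still an admissible degenerate operator, and since Proposition \ref{prop:bdresolvent} already shows that $R_\lambda(L_h):\rho^\mu \mathcal C^{k,\alpha} \to \rho^\mu \mathcal C^{k+4,\alpha}$ is bounded uniformly for $\lambda$ in any fixed half-plane $\Re\lambda > -a(n)+\delta$, one infers invertibility of $\lambda - L_g$ on $\rho^\mu \mathcal C^{k,\alpha}$ for all $\lambda$ with $\Re\lambda > -a(n)'$, where $a(n)' > 0$ depends on $\vep$ and $\mu$ and tends to $a(n)$ as $\vep \to 0$. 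This yields the stated spectral inclusion.
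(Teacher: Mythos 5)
Your proof takes essentially the same approach as the paper's own. The paper's (very brief) argument rests on exactly the three observations you make: the resolvent kernel depends continuously (even smoothly) on the metric, $L_g$ has the same indicial operator---and hence the same indicial roots---as $L_h$ (established in Appendix A), and the weighted H\"older norms for $g$ and $h$ are equivalent when the metrics are close; your resolvent-identity/Neumann-series argument is simply the concrete mechanism that substantiates the paper's continuity claim, so the two proofs coincide in substance.
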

This follows from the observation that the resolvent kernel $L$ depends continuously (even smoothly) on the metric $g$, and as noted in Appendix \ref{sec:appendix}, $L_g$ has the same indicial roots as $L_h$.
This means that the resolvent kernel continues to exist, provided $\vep$ is small enough, and enjoys the same boundedness properties on $\mathcal C^{k,\alpha}_{\mu}$ as the resolvent for $L_h$.  Note as well that it is unimportant whether or not the H\"older norm is measured with respect to $g$ or $h$, as the resulting H\"older norms are equivalent.

\

Using these results, we can state and sketch the proof of the main goal here, which establishes the dynamic stability of the gauge-adjusted Bach flow at the hyperbolic geometry. We use the formalism of analytic semigroups as presented in
\cite{Lunardi}.  The result is Theorem \ref{thm:main-2}, which we restate here for convenience.

\begin{theorem}[Nonlinear Stability of the Gauge-Adjusted Bach Flow at Hyperbolic Space]  Let $(M,h)$ be $\mathbb H^n$ with its standard hyperbolic metric. For any $k\geq 4$ and for
$\mu \in (\frac{n-1}{2} - r(n), \frac{n-1}{2}+r(n))$, where $r(n) > 0$ is a  constant depending on $n$, there exists $\vep > 0$ such that if  
\begin{equation*}
\| 
g_0 - h 
\|_{\mathcal C^{k,\alpha}_{\mu}} < \vep,
\end{equation*}
then the gauge-adjusted Bach flow $g_0(t)$ starting at $g_0(0) = g_0$ exists for all time and converges at an exponential rate to $h$ in the $\mathcal C^{k,\alpha}_{\mu}$ norm,
\begin{equation*}
\| g_0 (t) - h \|_{\mathcal C^{k,\alpha}_{\mu}} \leq C e^{-a(n) t}  \| g_0 - h \|_{\mathcal C^{k,\alpha}_{\mu}},
\end{equation*}
where $a(n)>0$ is a constant.
\end{theorem}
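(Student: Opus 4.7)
The plan is to reduce the nonlinear stability question to a standard application of analytic semigroup theory for quasilinear parabolic equations, as developed in Lunardi's book, and to use the spectral bound of Proposition \ref{prop:bdresolvent} as the essential input. Writing $g = h + v$, the gauge-adjusted Bach flow becomes
\[
\partial_t v = L v + Q_h(v), \qquad v(0) = g_0 - h,
\]
where $L = DF_h$ is the self-adjoint fourth-order operator analyzed in \S\ref{sec:l2spec}, and $Q_h(v) = F(h+v) - F(h) - Lv$ collects the quadratic and higher terms in $v$ and up to three of its covariant derivatives. The first step is to verify, following the arguments in \cite{BGIM2} for the ambient obstruction flow, that $F: \rho^\mu \mathcal C^{k+4,\alpha}(M,h) \to \rho^\mu \mathcal C^{k,\alpha}(M,h)$ is a $\mathcal C^1$ map of Banach spaces with locally Lipschitz derivative, and that $Q_h(v) = O(\|v\|^2)$ in the appropriate norms (this requires only that $k \geq 4$ so the three covariant derivatives appearing in $Q_h$ remain controlled in H\"older norm).

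Next, I would promote the linear information to a decay estimate for the semigroup. Since $L$ is admissible in the sense of \cite{BGIM2}, it is sectorial on $\mathcal C^{k,\alpha}_\mu(M,h)$, and Proposition \ref{prop:bdresolvent} confines its weighted H\"older spectrum to the half-plane $\{\mathrm{Re}\,\lambda \leq -a(n)\}$ for any admissible weight $\mu \in (\tfrac{n-1}{2} - r(n), \tfrac{n-1}{2}+r(n))$. By the standard characterization of analytic semigroups (see Chapter 2 of \cite{Lunardi}), this yields
\[
\|e^{tL}\|_{\mathcal L(\mathcal C^{k,\alpha}_\mu)} \leq C e^{-a(n) t}, \qquad t \geq 0,
\]
together with the usual smoothing bound $\|e^{tL} w\|_{\mathcal C^{k+4,\alpha}_\mu} \leq C t^{-1} e^{-a(n) t} \|w\|_{\mathcal C^{k,\alpha}_\mu}$.

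The third step is a contraction-mapping argument on Duhamel's formula
\[
v(t) = e^{tL} v(0) + \int_0^t e^{(t-s) L} Q_h(v(s)) \, ds.
\]
In the weighted Banach space $X_T = \{v \in C([0,T]; \mathcal C^{k,\alpha}_\mu) : \sup_{t} e^{a(n) t} \|v(t)\|_{\mathcal C^{k,\alpha}_\mu} < \infty\}$, the linear part is bounded by $C\|v(0)\|_{\mathcal C^{k,\alpha}_\mu}$, and the quadratic nature of $Q_h$ combined with the semigroup smoothing estimate makes the Duhamel term a contraction on a small ball provided $\vep = \|g_0 - h\|_{\mathcal C^{k,\alpha}_\mu}$ is small enough; here the spectral gap $a(n)>0$ is what guarantees that the integral $\int_0^t e^{-a(n)(t-s)}\, ds$ stays bounded and that the exponential weight is preserved. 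This yields a unique short-time solution that, by the a priori exponential decay built into $X_T$, extends to $T = \infty$ by the standard continuation argument, giving the bound \eqref{eqn:exp-decay}.

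The main obstacle is subtler than any one of these steps in isolation: it is the verification that the nonlinear remainder $Q_h$ really maps $\rho^\mu \mathcal C^{k+4,\alpha} \to \rho^\mu \mathcal C^{k,\alpha}$ with quadratic estimates, because $Q_h$ involves the inverse metric $(h+v)^{-1}$ and Christoffel-type expressions that are only polynomial in $h^{-1}$ and $v$ after a careful expansion, while simultaneously one must preserve the weight $\rho^\mu$ under multiplication (which is where the requirement $\mu > 0$ enters, through $\rho^{2\mu} \leq \rho^\mu$). Once this structural check is made, the remaining ingredients are on the shelf: sectoriality and the spectrum bound supply the linear semigroup, and the quadratic nonlinearity does the rest via Lunardi's standard fixed-point scheme. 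The result for Poincar\'e-Einstein backgrounds (Theorem \ref{thm: main P-E}) then follows by replacing Proposition \ref{prop:bdresolvent} with Corollary \ref{cor2}, and the ungauged statements (Corollaries \ref{cor:main-2} and \ref{cor:main P-E}) follow by integrating the gauge vector field, as in \cite{BGIM}.
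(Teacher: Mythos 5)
Your proposal follows essentially the same path as the paper's proof: both rest on the sectoriality of $L$ on $\mathcal C^{k,\alpha}_\mu$ established in \cite{BGIM2}, the weighted H\"older spectral bound of Proposition \ref{prop:bdresolvent}, and the structural verification that $F$ is a $\mathcal C^1$ map with quadratically vanishing remainder $Q_h$, after which the paper simply invokes \cite[Proposition 9.1.2]{Lunardi} to close the argument. You unpack that abstract citation into the Duhamel/contraction-mapping scheme underlying Lunardi's proof (and correctly flag that the weight-preservation under multiplication, via $\mu>0$, is the key structural check on $Q_h$); the only imprecision is that the iteration space must also track an intermediate-regularity norm so the smoothing singularity in the Duhamel integral is $(t-s)^{-3/4}$ rather than $(t-s)^{-1}$, a standard detail handled within the Lunardi framework you are appealing to.
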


\begin{proof}
Quoting \cite[Proposition 9.1.2]{Lunardi}, the long-time existence and convergence of the gauged flow to the hyperbolic metric $h$ is proved using the spectral estimate 
of Proposition \ref{prop:bdresolvent} and the main result of \cite{BGIM2}, which establishes the sectoriality of the operator $L$.  Consequently, there exists 
a $\mathcal C^{k,\alpha}_\mu$ neighborhood of $h$ for which the gauge-adjusted flow emanating from any metric in this neighborhood exists for all time and 
converges exponentially to the hyperbolic metric. 

\end{proof}

We may immediately prove Corollary \ref{cor:main-2}:

\begin{proof}

It follows easily that the ungauged Bach flow enjoys the same convergence to $h$.   For this, we recall that the gauge vector field $Z$ in \eqref{eqn:Z-final-gauge} is 
(the metric dual of) the $1$-form
\begin{align*}  
\omega &= \frac{1}{2} \Delta_L \beta_{h} (g_0)  +\frac{(n-1)}{2} \delta_{h} g_0 + \frac{1}{4} d( \tr^{h} g_0 ).
\end{align*}
It is clear that $\omega \in \rho^{\mu} \mathcal C^{k-3,\alpha}$ and thus decays at spatial infinity.  Furthermore, \eqref{eqn:exp-decay} implies that $Z$ decays exponentially in 
time on all of the ball $\mathbb{B}^n$ as $t \to \infty$.  We can then integrate this time-dependent vector field to conclude that the associated one-parameter family of diffeomorphisms 
$\phi_t$ converges to a limiting smooth diffeomorphism $\phi_{\infty}$.  This implies that $\overline{g_0}(t) = (\phi^{-1}_t)^* g_0(t)$ is the (unique) Bach flow solution 
emanating from $g_0$. This solution exists for all time and converges to $\phi^*_{\infty} h$. 
\end{proof}

\ 

Finally, for completeness we note that Theorem \ref{thm: main P-E} and Corollary \ref{cor:main P-E} follow immediately. 

\appendix

\newcommand{\Mbar}{\overline{M}}
\newcommand{\qbar}{\overline{q}}
\newcommand{\barq}{\overline{q}}
\newcommand{\ubar}{\overline{u}}
\newcommand{\Gambar}{\overline{\Gamma}}
\newcommand{\rhob}{\overline{\rho}}
\newcommand{\gendudu}[5]{{#1}^{\phantom{#2}#3\phantom{,#4}#5}_{#2\phantom{#3},#4}}
\newcommand{\genddud}[5]{{#1}_{#2 #3, \phantom{#4} #5}^{\phantom{#2#3,}#4}}

\section{Calculation of the Indicial Roots of $L$}
\label{sec:appendix}
In this appendix we compute the indicial roots of the linearization of the gauge-adjusted Bach flow at any conformally compact
asymptotically hyperbolic metric, and in particular at the hyperbolic metric $h$. This is the operator
\begin{align}
Lv = -\frac12 (\nabla^*\nabla)^2 v + \frac{n+2}{2} \nabla^* \nabla v + & \left((\delta \delta v) h  + n \delta^* \delta v\right) - nv \nonumber \\
& + ( \Delta \tr v) h + \nabla^2 \tr v + (\tr v)\, h.
\label{galin}
\end{align}

The results are collected in Proposition \ref{prop: ind rts}. Slightly more generally, we then compute the indicial roots of $\lambda - L$, where $\lambda$ is the spectral parameter in Proposition \ref{prop:ind-poly}. 

\ 

Recall that a value $\gamma$ is an indicial root of $L$ if there exists a symmetric $2$-tensor $v$ which is smooth up to $\del M$ such that
\[
L( x^\gamma v) = \calO( x^{\gamma + 1}).
\]
Here $x$ is a boundary defining function for $\del M$, so $x \geq 0$ on $M$ with $x = 0$ and $dx \neq 0$ at $\del M$. For more general $0$-operators,
this extra vanishing in the normal direction for a particular value of $\gamma$ might only occur at a particular point $p \in \del M$; in other words, the 
indicial roots might depend on $p \in \del M$.  Fortunately, in all situations considered here, this is not the case and the indicial roots are constant
along $\del M$, so the equation above stands as is. 

\

To understand this condition, expand $v$ into its Taylor series, $v \sim \sum_j v_j x^j$, and calculate formally that for a general $\gamma \in \mathbb C$, 
\[
L( x^\gamma v)  = x^{\gamma} k(x,y) \sim \sum_j k_j(y) x^{\gamma + j}
\]
for some symmetric $2$-tensors $k_j$.  We are using a coordinate system $(x, y_1, \ldots, y_{n-1})$ here, so $y_i$ parametrizes a neighborhood in $\del M$. 
That the right hand side is of the form $x^\gamma$ times a smooth function is a consequence of
the fact that $L$ is a $0$-operator. Then $\gamma$ is an indicial root if $k_0(y) = 0$, i.e., there is some leading order cancellation.  Indeed, 
by inspection we see that
\[
L( x^\gamma v) = I(L, \gamma) v_0 x^\gamma + \calO(x^{\gamma +1}),
\]
where $I(L,\gamma)$ is a polynomial of order four in $\gamma$, called the indicial family of $L$, which takes values in the space of endomorphisms
of $\Sigma^2(T^*_p M)$. Thus $\gamma$ is an indicial root if and only if there exists some $v_0$ in this finite dimensional vector space 
such that $I(L,\gamma)v_0 = 0$. This is a sort of generalized eigenvalue equation.

\ 

We can also consider this operator $L$ with respect to any conformally compact metric $g$, and it is not hard to see that $I(L,\gamma)$ depends only
on the leading order behavior of the coefficients of $L$. This means that the computation of $I(L,\gamma)$ disregards all higher order terms 
in the Taylor series of $g$ at $x=0$, and hence is the same if we were to replace the metric by $h$.  Even further, this computation also
disregards all tangential derivatives since any $\del_{y_j}$ that appears in $L$ is accompanied by an extra factor of $x$ (this is a general
feature of $0$-operators).  In summary, if we write $L$ in local coordinates as 
\[
L = \sum_{j + |\alpha| \leq 4}  a_{j \alpha}(x,y) (x\del_x)^j (x\del_y)^\alpha  \Longrightarrow I(L) = \sum_{j \leq 4} a_{j0}(0,y_0) (x\del_x)^j,
\]
where each coefficient $a_{j\alpha}$ is a matrix depending smoothly on $(x,y)$, then the indicial roots are determined only by the
leading part
\[
I(L) = \sum_{j \leq 4} a_{j 0}(0,y) (x\del_x)^j ,
\]
which is called the {\it indicial operator} of $L$.  A value $\gamma$ is an indicial root of $L$ if and only if 
\[
\sum_{j=0}^4 a_{j0}(0,y) \gamma^j v_0 = 0
\]
for some $v_0 \in \Sigma^2( T^*_{(0,y_0)}M)$.   (As noted above, in more general situations, these indicial roots depend on $y$,
but for all operators considered here they do not.)  The indicial operator, and hence the indicial roots, of $L$ at any metric $g$ are
the same as for $L$ at the metric $h$. Thus our task reduces to computing this model indicial operator. 
One additional observation simplifies the calculations below further.  

The indicial operator is functorial in the sense that
\[
I( P_1 \circ P_2) = I(P_1) I(P_2), \ \ \ I(P^*) = I(P)^*
\]
for any $0$-operators $P$, $P_1$ and $P_2$. Since $L$ is constituted by  various compositions of the covariant derivative,
it suffices in principle to compute $I(\nabla)$ on $1$-forms, symmetric $2$-tensors and $3$-tensors which are symmetric in the first two indices.
We use this observation only implicitly in the ensuing calculations.

\ 

We first note that $L$ preserves the subspaces of pure-trace and trace-free tensors.  This is apparent from the way that $L$ is written in \eqref{galin},
since if $v$ is trace-free, then the three final terms in the expression for $L$ vanish, $\nabla^* \nabla$ preserves the space of trace-free tensors,
and $(\delta \delta v) h + n\delta^* \delta v$ is the trace-free part of $n \delta^* \delta v$.  Thus we calculate the action of $L$ on these two 
subspaces separately. Below we employ a further decomposition of the space of trace-free tensors into three further $L$-invariant subspaces. 

\

\noindent{\bf Pure-trace $2$-tensors}

\

First suppose that $v = f  h$. Then a short calculation shows that
\[
L( f h) = \left( -\frac12 \Delta^2 f + \frac{n}{2} \Delta f \right) \, h.
\]
The indicial operator of the scalar Laplacian is well-known to equal
\begin{align*}
I(\Delta) = (x\del_x)^2 + (1-n)x\del_x.
\end{align*}
Therefore, 
\begin{align*}
I(L) ( f h)  = \left(-\frac12 ( (x\del_x)^2 + (1-n)x\del_x)^2 + \frac{n}{2} ( (x\del_x)^2 + (1-n)x\del_x)\right) f\, h.
\end{align*}
For simplicity, write $I(L_{\tr})$ for the operator in parentheses acting on scalar functions, so 
\begin{align} \label{eqn:locref1}
I(L) (x^\gamma h) = \left(-\frac12 ( \gamma^2 + (1-n)\gamma)^2 + \frac{n}{2} (\gamma^2 + (1-n)\gamma)\right) x^\gamma \, h.
\end{align}
There is a final adjustment. The definition of indicial roots requires that we compute $I(L)$ acting on $x^\gamma q$ where
$q$ is smooth up to the boundary. Since $h = x^{-2} h_E$ where $h_E = dx^2 + dy^2$ is the Euclidean metric, the left
hand side of equation \eqref{eqn:locref1} is $I(L) (x^{\gamma-2} h_E)$, so the roots $\tilde{\gamma}_j$ of this polynomial correspond to indicial roots
$\gamma_j = \tilde{\gamma}_j - 2$. We have now proved that the indicial roots of $L$ acting on the space of pure-trace tensors are the roots
of the fourth order polynomial 
\begin{equation}
I(L_{\tr},\gamma ) = \left(-\frac12 ( (\gamma + 2)^2 + (1-n)(\gamma + 2))^2 + \frac{n}{2} ((\gamma + 2)^2 + (1-n)(\gamma + 2))\right),
\label{indroots-pt}
\end{equation}
which are $\gamma = -3, -2, n-3, n-2.$

\

\noindent{\bf Trace-Free $2$-tensors}

\

We now assume that $v = \ttv$ is trace-free. To simplify notation, we write $\ttv = q$; this is a symmetric $2$-tensor on $\RR^n_+$ which is smooth for $x \geq 0$.  
We work in the standard rectangular coordinates $(x,y) \in \RR^n_+$ with $x \geq 0$, and we let $0$ be the index corresponding to the $x$ direction, $\alpha, \beta, \ldots$
the indices for the tangential $y$ directions, and $i, j, \ldots$ for all directions.  We also use the summation convention.

In these coordinates, the Christoffel symbols for the hyperbolic metric $h$ are given by 
\[
\Gamma^k_{j\ell} = -x^{-1} ( \delta_j^k \delta_{\ell 0} + \delta_\ell^k \delta_{j0} - \delta^{k0}\delta_{j\ell}),
\]
which reduce to
\[
\Gamma^j_{i 0} = \Gamma^j_{0i} = -\frac{1}{x} \delta^j_i,\ \ \Gamma^0_{\alpha \beta} = \frac{1}{x} \delta_{\alpha \beta},
\]
and all other $\Gamma^k_{ij} = 0$. The $1/x$ singularity is expected since $x\nabla$ is a $0$-operator, or phrased differently, the $h$-covariant 
derivatives are of the form
\[
\nabla_0 = \del_x + \frac{1}{x} A_0, \quad \nabla_\alpha = \del_\alpha + \frac{1}{x} A_\alpha,
\]
where $A_0$ and $A_\alpha$ are matrices depending on the type of tensor on which $\nabla$ is acting.  Note that we write
$\del_0$ and $\del_\alpha$ instead of $\del_x$ and $\del_{y_\alpha}$. 
As observed above, the indicial operators do not depend on the tangential derivatives $x\del_\alpha$, so we drop these systematically,
which simplifies computations. We write $L \equiv L'$ if $L$ and $L'$ differ by higher order terms.

\

First observe that if $E = E_{i_1 \ldots i_k}$ is a tensor of order $k$, then 
\[
\nabla_0 E_{i_1 \ldots i_k}  := E_{i_1 \ldots i_k, 0} = \del_x E_{i_1 \ldots i_k} - \sum_{j=1}^k \sum_{s=0}^{n-1}  \Gamma^s_{0 \, i_j} E_{i_1 \ldots i_{j-1} s i_{j+1} \ldots i_k} 
= (\del_x + \frac{k}{x}) E_{i_1 \ldots i_k}. 
\]
The expressions for the tangential derivatives are slightly more complicated and depend on the degree of the tensor.  Thus, if $\omega$ is a $1$-form, then
\[
\nabla_\alpha \omega_0 := \omega_{0,\alpha} \equiv -\Gamma^s_{\alpha 0} \omega_s = \frac{1}{x}\omega_\alpha,\ \ \nabla_\beta \omega_\alpha := \omega_{\alpha, \beta} 
\equiv -\Gamma^s_{\alpha \beta}\omega_s = -\frac{1}{x} \delta_{\alpha \beta} \omega_0.
\]
Next, if $q_{ij}$ is a symmetric $2$-tensor, then 
\begin{align*}
\nabla_\alpha q_{00} & := q_{00,\alpha} \equiv \frac{2}{x} q_{0\alpha},\quad \nabla_\beta q_{0 \alpha} := q_{0 \alpha, \beta} \equiv \frac{1}{x} q_{\alpha \beta} - 
\frac{1}{x} \delta_{\alpha \beta}  q_{0 0}\\
& \mbox{and}\ \ \nabla_\gamma q_{\alpha \beta} := q_{\alpha \beta, \gamma} \equiv -\frac{1}{x} \delta_{\alpha \gamma} q_{0 \beta} - \frac{1}{x} \delta_{\beta \gamma} q_{0 \alpha}.
\end{align*}

In computing the indicial operator of $\nabla^* \nabla$, it suffices to only compute the `pure' second covariant derivatives:
$q_{ij,00}$, $q_{00, \alpha \alpha}$, $q_{0\alpha, \beta \beta}$ and $q_{\alpha \beta, \gamma \gamma}$.  From the observations about $\nabla_0$ above, we first see
that for any $i, j = 0, \ldots, n-1$, 
\[
q_{ij, 00} = ( \del_x + \frac{3}{x}) (\del_x + \frac{2}{x}) q_{ij},
\]
simply because $q_{ij}$ is a $2$-tensor and $q_{ij,0}$ is a $3$-tensor.    For use below, we record that
\[
q_{ij,0}^{\ \ \  0} = x^2 (\del_x + \frac{3}{x})(\del_x + \frac{2}{x}) q_{ij}= (x\del_x + 2)^2 q_{ij}.
\]
Next we compute that
\begin{align*}
q_{00, \alpha \alpha} & \equiv - \frac{1}{x} (\del_x + \frac{4}{x}) q_{00} + \frac{2}{x^2} q_{\alpha \alpha}  \\
q_{0\alpha, \beta \beta} & \equiv -\frac{1}{x}( \del_x + \frac{3}{x})q_{0\alpha} - \frac{3}{x^2}\delta_{\alpha \beta} q_{0 \beta} \\
q_{\alpha \beta, \gamma \gamma} & \equiv -\frac{1}{x} (\del_x + \frac{2}{x}) q_{\alpha \beta} + \frac{1}{x^2} ( 2 \delta_{\alpha \gamma} \delta_{\beta \gamma} q_{00}
- \delta_{\alpha \gamma} q_{\beta \gamma} - \delta_{\beta \gamma} q_{\alpha \gamma} ).
\end{align*}

Taken together, using $(\nabla^* \nabla q)_{ij} = - x^2 ( q_{ij,00} + \sum q_{ij, \alpha \alpha})$, and introducing the notation
\[
\hat{q} = \sum_\alpha q_{\alpha \alpha},
\]
we find that 

\begin{align*}
(I(\nabla^* \nabla) q)_{00} & = (P + (4n-8)) q_{00} - 2 \hat{q}  \\
(I(\nabla^*\nabla)q)_{0\alpha} & = (P + (3n-4)) q_{0\alpha} \\
(I(\nabla^* \nabla) q)_{\alpha \beta} & = (P + (2n-4)) q_{\alpha \beta} - 2 \delta_{\alpha\beta} q_{00}.
\end{align*}
where $P := - (x\del_x)^2 + (n-5) x\del_x$.

\

The other main component of $L$ is the trace-free part of $n \delta^* \delta q$.  For this calculation, set $\omega = \delta q$ and for convenience,
write $\hat{q} = \sum_\alpha q_{\alpha \alpha}$. Then using the computations above, we obtain that
\begin{align*}
\omega_0  = - x (x\del_x + (3-n)) q_{00} - x \hat{q}, \ \ \omega_\alpha \equiv - x (x\del_x + (2-n)) q_{0 \alpha},
\end{align*}
and then
\begin{align*}
(\delta^* \omega)_{00} = (\del_x + \frac{1}{x}) \omega_0, \ \ (\delta^* \omega)_{0\alpha} = \frac12 (\del_x + \frac{2}{x}) \omega_\alpha \,  \ \ 
(\delta^* \omega)_{\alpha \beta} \equiv  -\frac{1}{x} \delta_{\alpha \beta} \omega_0.
\end{align*}
These lead to the formul\ae
\begin{align*}
(\delta^* \delta q)_{00} & = - (x\del_x + 2)(x\del_x + (3-n)) q_{00} - (x\del_x + 2) \hat{q} , \\
( \delta^* \delta q)_{0 \alpha} & = - \frac12 (x\del_x + 3) (x\del_x + (2-n)) q_{0\alpha}, \\
(\delta^* \delta q)_{\alpha \beta} & =  \delta_{\alpha \beta} \left( ( x\del_x + (3-n)) q_{00} + \hat{q}\right),
\end{align*}
and
\[
(\delta \delta q) h = \left( (x\del_x + 3-n)^2 q_{00} + (x\del_x + 3-n) \hat{q}\right) (dx^2 + h_E).
\]

\

Writing $B := (\delta \delta q) h + n \delta^* \delta q$, we compute finally that
\begin{align*}
B_{00} & = (1-n) (x\del_x  + 3) \big( (x\del_x + 3-n) q_{00} + \hat{q}\big) \\
B_{0\alpha} & = -\frac{n}{2} (x\del_x + 3)(x\del_x + 2-n) q_{0\alpha} \\ 
B_{\alpha \beta} & = (x\del_x + 3)\big( (x\del_x + 3-n) q_{00}  + \hat{q}\big) \delta_{\alpha \beta}.
 \end{align*}
Note that $B$ is trace-free. 

\ 

We make one final reduction, using the fact, advertised above in this section, that the space of trace-free tensors decomposes further 
into three invariant subspaces $V_1 \oplus V_2 \oplus V_3$, where
\begin{align*}
V_1 & := \{ f ( - (n-1)dx^2 + h_E) \}, \quad V_2 := \{ q: q_{00} = q_{\alpha \beta} = 0\ \forall\, \alpha, \beta\}, \\
& \mbox{and} \quad V_3:= \{ q: q_{00} = q_{0\alpha} = 0,\ \sum q_{\alpha \alpha} = 0\}.
\end{align*}
We now write out the indicial operator $I(L)$ restricted to each one of these subspaces: 

\

We begin with $V_1.$ Observe that if $q_{00} = -(n-1)f$, $q_{\alpha \alpha} = f$
and all other $q_{ij} = 0$, then
\[
I(\nabla^* \nabla q) = \begin{bmatrix} - (n-1) Qf   & 0 \\ 0  & Q f \end{bmatrix},
\]
where $Q := - (x\del_x)^2 + (n-5) x\del_x + 4n-6$. In addition, the trace-free part of $\delta^* \delta q$ has 
\[
B_{00} = -(n-1)B_{\alpha \alpha}, \qquad B_{\alpha \alpha} = - (n-1)( (x\del_x)^2 +(5-n)x\del_x + 6-3n) f,
\]
with all of the other $B_{ij} = 0$.   Therefore, for $q \in V_1,$ $I(L)q = k$, where
\begin{align}
\nonumber k_{00} & = - (n-1) \left( -\frac12 Q^2 f + \frac{n+2}{2} Q f  +B_{\alpha \alpha} -n f\right), \\
k_{\alpha \alpha} &=  -\frac12 Q^2 f +  \frac{n+2}{2} Q f + B_{\alpha \alpha} - nf,
\label{eqn:ind rts V1}
\end{align}
and all other $k_{ij} = 0$. The indicial roots are the roots of the associated polynomial, which are $\gamma = -3, -2, n-3, n-2.$

\ 

Next we move to the space $V_2$, and so we suppose that $q$ is such that only the $q_{0\alpha}$ are nonzero. Then $I(L)q$ is calculated similarly, with
\begin{equation}
I(L)q_{0\alpha} = \left( - \frac12( P + 3n-4)^2 + \frac{n+2}{2} (P + 3n-4) - \frac{n}{2} (x\del_x + 3)(x\del_x + 2-n) - n\right) q_{0\alpha}.
\label{eqn:ind rts V2}
\end{equation}
The roots are  $\gamma = -3, -1, n-4, n-2.$

\ 

Finally, we consider $q \in V_3$, and so we suppose that $q_{00} = q_{0\alpha} = \hat{q} = 0$. Then all components of $B$ vanish and
\begin{equation}
I(L)q_{\alpha \beta} = \left(- \frac12(P + 2n-4)^2 + \frac{n+2}{2} (P + 2n-4) -n \right) q_{\alpha\beta}.
\label{eqn:ind rts V3}
\end{equation}
The roots in this case are $\gamma = -2, -1, n-4, n-3.$

\

We gather all of these results into the following proposition: 
\begin{prop}
\label{prop: ind rts} Let $(M^n,g)$ be a sufficiently smooth asymptotically hyperbolic metric.  The fourth-order formally-self-adjoint operator $L$ given in \eqref{galin}, acting on symmetric $2$-tensors smooth up to the boundary,  has sixteen real indicial roots that occur in pairs symmetric about $-(n-5)/2$ if $dx^i$ is used as a basis of $T^*M$.  These roots are the following:
\begin{align*}
\mu_{V_0}&=-3,-2,n-3,n-2,\\
\mu_{V_1}&=-3,-2,n-3,n-2,\\
\mu_{V_2}&=-3,-1,n-4,n-2,\\
\mu_{V_3}&=-2, -1, n-4, n-3.
\end{align*}
For $n \geq 4$, we thus find that the indicial radius is positive and is equal to $\frac{n-3}{2}$.
\end{prop}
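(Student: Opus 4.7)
The plan is to reduce the computation to an algebraic problem at a single boundary point. Since $L$ is a $0$-operator on $(M,g)$, the indicial operator $I(L,\gamma)$ depends only on the leading-order normal coefficients, so I may drop every tangential derivative $x\del_{y_\alpha}$ and every term carrying an extra factor of $x$, and I may also replace $g$ by the model hyperbolic metric $h=x^{-2}(dx^2+dy^2)$: the resulting polynomial in $\gamma$ is identical for every asymptotically hyperbolic $g$.  Functoriality of $I$ under composition and adjoints then lets me assemble $I(L,\gamma)$ from the elementary building block $I(\nabla,\gamma)$ on $1$-, $2$- and $3$-tensors.

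The key structural input is the $L$-invariant decomposition $\Sigma^2(T^*M) = V_0 \oplus V_1 \oplus V_2 \oplus V_3$ displayed in the proposition, with $V_0$ the pure-trace line. Invariance of $V_0$ and of the trace-free complement under $L$ is immediate from the form of $L$ written in \eqref{galin}, since the three terms $(\Delta \tr v)h$, $\nabla^2 \tr v$ and $(\tr v)h$ vanish on trace-free tensors while $\nabla^*\nabla$ preserves the trace. The further splitting of the trace-free part rests on the observation that, at the model level, $I(\nabla^*\nabla)$ and the indicial operator of $B := (\delta\delta q)h + n\,\delta^*\delta q$ couple $q_{00}$ and $\hat{q} := \sum_\alpha q_{\alpha\alpha}$ only through a single $2\times 2$ block, while the components $q_{0\alpha}$ (that is, $V_2$) and the trace-free part of $q_{\alpha\beta}$ (that is, $V_3$) decouple. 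I would verify this invariance first by a short direct computation using the model Christoffels $\Gamma^j_{i0}=-\delta^j_i/x$ and $\Gamma^0_{\alpha\beta}=\delta_{\alpha\beta}/x$.

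On each invariant summand, the indicial roots are then the roots of a single scalar quartic in $\gamma$. For $V_0$, Proposition \ref{prop:L-selfadj} gives $L(fh) = (-\tfrac12 \Delta^2 f + \tfrac{n}{2}\Delta f)\,h$; substituting $I(\Delta,\tilde{\gamma}) = \tilde{\gamma}(\tilde{\gamma}-(n-1))$ and applying the weight shift $\gamma = \tilde{\gamma}-2$ forced by $h = x^{-2}h_E$ yields the quadruple $-3,-2,n-3,n-2$ by factoring. For $V_1, V_2, V_3$ the bulk of the work is in computing the model forms of first and second covariant derivatives on the three families $q_{00}$, $q_{0\alpha}$ and $q_{\alpha\beta}$; composing these produces $I(\nabla^*\nabla)$ and $I(B)$ as explicit block matrices with entries polynomial in the single variable $P := -(x\del_x)^2 + (n-5)x\del_x$. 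Restriction to each $V_j$ collapses the matrix to a scalar quartic, and the stated root quadruples follow by inspection.

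The chief obstacle is the careful bookkeeping inside the $(q_{00},\hat{q})$ coupled block: one must recognize $V_1$ as the one-dimensional eigenline spanned by the coefficient vector $(-(n-1),1)$, so that the apparently coupled $2\times 2$ system decouples into a scalar quartic on $V_1$ whose roots, perhaps unexpectedly, coincide with those on $V_0$. Once all four quartic polynomials are in hand, the pairing symmetry of the sixteen roots reflects the formal self-adjointness of $L$ with respect to $L^2(\dvol_h)$ (after the conformal weight of $\Sigma^2(T^*M)$ is taken into account), and the indicial radius computation amounts to noting that for $n\geq 4$ the roots $\gamma=-1$ and $\gamma=n-4$, appearing in $V_2$ and $V_3$, are the pair closest to the symmetry center and lie at distance exactly $(n-3)/2$ from it.
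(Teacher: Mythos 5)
Your proposal is correct and follows essentially the same route as the paper: reduce to the model hyperbolic metric at a boundary point, observe that $L$ preserves the splitting into the pure-trace line $V_0$ and the three trace-free subspaces $V_1,V_2,V_3$, compute $I(\nabla^*\nabla)$ and $I((\delta\delta q)h+n\delta^*\delta q)$ from the model Christoffel symbols, and read off the roots of the resulting scalar quartics (with the weight shift $\gamma=\tilde\gamma-2$ coming from the $\rho^{-2}$ conformal factor). One small bookkeeping note: you correctly identify $V_1$ via $q_{00}=-(n-1)f,\ q_{\alpha\alpha}=f$, which is what the paper's computation actually uses; the displayed definition $V_1=\{f(-(n-1)dx^2+h_E)\}$ in the paper appears to contain a typo (it should read $-(n-1)dx^2 + dy^2$, or equivalently $-n\,dx^2+h_E$, to make the tensor trace-free).
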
 
In order to apply this proposition to the gauge-adjusted Bach flow, we need two modifications.  First, we need the addition of a complex spectral parameter $\lambda$.  Second, in order to apply the results of the fourth author in \cite{Ma-edge}, we increase indicial roots by $2$ to account for the fact that $\frac{dx^i}{\rho}$ is used as a basis of $T^*M$.  The main result of this appendix is the following:

\begin{prop} \label{prop:ind-poly} Let $(M^n,g)$ be a sufficiently smooth asymptotically hyperbolic metric, let $L$ be given by \eqref{galin}, and  let $\lambda \in \mathbb{C}$.  The fourth-order formally-self-adjoint operator $\lambda - L$ acting on symmetric $2$-tensors that are smooth up to the boundary has sixteen indicial roots that occur in pairs symmetric about $-(n-1)/2$, if $\frac{dx^i}{\rho}$ is used as a basis of $T^*M$.  These roots are
\begin{align*}
\mu_{V_0} &= \frac{n-1}{2} \pm \frac{1}{2}\sqrt{ 1 + n^2 \pm 2 \sqrt{n^2 - 8 \lambda}}, \\
\mu_{V_1} &= \frac{n-1}{2} \pm \frac{1}{2}\sqrt{ n^2 +1  \pm 2 \sqrt{n^2 - 8 \lambda}},\\
\mu_{V_2} &= \frac{n-1}{2} \pm \frac{1}{2}\sqrt{ n^2 -2n + 5  \pm 4 \sqrt{n^2 -2n +1 -2 \lambda}},\\
\mu_{V_3} &= \frac{n-1}{2} \pm \frac{1}{2}\sqrt{ n^2 - 4n + 5 \pm 2 \sqrt{n^2 - 4n + 4 - 8 \lambda}}.
\end{align*}
Thus, there exists a constant $a(n) > 0$ such that if $\Re(\lambda) > - a(n)$, then the indicial radius of $\lambda - L$ is strictly positive. 
\end{prop}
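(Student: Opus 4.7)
The plan is to build directly on Proposition \ref{prop: ind rts}: the indicial operator of $L$ depends only on the leading asymptotics of its coefficients, which for any sufficiently smooth asymptotically hyperbolic $(M^n,g)$ coincide with those at the model hyperbolic metric $h$. Hence the computation of indicial roots for $\lambda - L_g$ reduces to that for $\lambda - L_h$, with two adjustments to the calculation carried out in Proposition \ref{prop: ind rts}: first, a subtraction of $\lambda$ from each of the scalar quartic indicial polynomials obtained there; second, a uniform shift of $+2$ in the indicial exponents to account for changing the trivialization of $T^*M$ from the coordinate coframe $\{dx^i\}$ to $\{dx^i/\rho\}$ (since a symmetric $(0,2)$-tensor with boundary-smooth components in the latter frame has components of order $\rho^{-2}$ in the former). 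On each of the four $L$-invariant subspaces $V_0, V_1, V_2, V_3$, the indicial action reduces to a fourth-order scalar polynomial $P_{V_j}(\alpha)$ in the exponent $\alpha = (x\partial_x)|_{f = x^\alpha}$ of the scalar parameter; the indicial roots of $\lambda - L$ on $V_j$ are then given by solving $\lambda = P_{V_j}(\alpha)$ and setting $\mu = \alpha + 2$.

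The key algebraic observation producing the nested-radical formulas is that each quartic $P_{V_j}(\alpha)$ has a biquadratic structure: there is a quadratic $R_j(\alpha)$ in $\alpha$ and constants $c_{0,j}, c_{1,j}$ depending only on $n$ such that $P_{V_j}(\alpha) = -\tfrac12 R_j(\alpha)^2 + c_{1,j} R_j(\alpha) + c_{0,j}$. For $V_0$, this is already visible in \eqref{indroots-pt}, with $R_0 = \alpha^2 + (1-n)\alpha$ the indicial symbol of the scalar Laplacian; for $V_1$, a short manipulation of the system \eqref{eqn:ind rts V1} with $R_1(\alpha) := \alpha^2 + (5-n)\alpha + (6-4n)$ yields $P_{V_1} = -\tfrac12 R_1^2 - \tfrac{3n}{2} R_1 - n^2$ (explaining why $V_0$ and $V_1$ end up with identical root formulas); and for $V_2$ and $V_3$ the appropriate $R_j$ comes directly from the indicial quadratic of $I(\nabla^*\nabla)$ on the respective subspace as in \eqref{eqn:ind rts V2}--\eqref{eqn:ind rts V3}. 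The equation $\lambda = P_{V_j}(\alpha)$ thus reduces first to a quadratic in $R_j$, producing the inner radical $\sqrt{C_j - k_j \lambda}$, and then to a quadratic in $\alpha$, producing the outer radical $\sqrt{A_j \pm 2B_j \sqrt{C_j - k_j\lambda}}$; after the shift $\mu = \alpha + 2$ one recovers exactly the four nested-radical formulas stated in the proposition.

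To establish positivity of the indicial radius, I would use the fact that each root is of the form $\mu = \tfrac{n-1}{2} \pm \tfrac12 \sqrt{W_j(\lambda)}$ for $W_j$ a nested radical of the above shape; hence $\Re \mu = (n-1)/2$ if and only if $W_j$ is real and non-positive. Real-ness of $W_j$ requires the inner radical $\sqrt{C_j - k_j \lambda}$ to be real, which forces $\lambda$ itself to be real; and negativity then reduces to an elementary one-variable inequality, yielding the explicit thresholds $\lambda_0^* = \lambda_1^* = -(n^2-1)^2/32$, $\lambda_2^* = -(n-3)^2(n+1)^2/32$, and $\lambda_3^* = -(n-3)^2(n-1)^2/32$. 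Any $a(n) < \min_j |\lambda_j^*|$ then works on the real axis, and the estimate extends to the full complex half-plane $\{\Re \lambda > -a(n)\}$ by a continuity/monodromy argument anchored at $\lambda = 0$, where by Proposition \ref{prop: ind rts} all sixteen roots are real and lie in $\{-1,0,1,n-2,n-1,n\}$. The non-sharp but convenient value $a(n) = (n-2)(3n-11)(5n-13)/128$ of Remark \ref{rem:ind-poly} lies below $\min_j|\lambda_j^*|$ for every $n\geq 4$ and thus suffices.

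The main obstacle is this final step, specifically the branch bookkeeping for the nested square roots when $\lambda$ is complex: one needs to verify that moving $\lambda$ through the half-plane $\{\Re \lambda > -a(n)\}$ never causes any of the eight $(\pm,\pm)$ sign combinations to push the outer radicand onto the negative real axis before the thresholds $\lambda_j^*$ are crossed. Because the inner branch cuts live on the real axis beyond $\lambda = C_j/k_j$ and the outer radicand varies continuously in $\lambda$ from its explicit positive values at $\lambda = 0$, this reduces to an elementary geometric check in the complex plane that I would carry out for each of the four families in turn.
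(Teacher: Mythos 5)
Your proposal follows essentially the same route as the paper: block-diagonalize the indicial operator over $V_0 \oplus V_1 \oplus V_2 \oplus V_3$, subtract $\lambda$ from each scalar quartic from Proposition \ref{prop: ind rts}, shift the exponent by $+2$ for the $dx^i/\rho$ coframe, exploit the biquadratic structure to produce the nested radicals, and observe that the outer radicand can land on the nonpositive real axis only when $\lambda$ is real, yielding the thresholds $\lambda_j^*$. Your explicit account of the biquadratic structure $P_{V_j}(\alpha) = -\tfrac12 R_j^2 + c_{1,j}R_j + c_{0,j}$ is more transparent than the paper's appeal to a computer algebra package, and it does explain the $V_0$--$V_1$ coincidence.

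The branch-bookkeeping concern in your final paragraph is, however, a non-issue; you already dispose of it in your middle paragraph, and no continuity or monodromy argument is required. Since $B_j \ne 0$ and $A_j, B_j, C_j, k_j$ are all real, $W_j = A_j \pm 2B_j\sqrt{C_j - k_j\lambda}$ is real if and only if $\sqrt{C_j - k_j\lambda}$ is real, i.e.\ if and only if $\lambda$ is real with $C_j - k_j\lambda \ge 0$ (if instead $C_j - k_j\lambda$ is real and negative, the inner radical is nonzero and purely imaginary and $W_j$ again fails to be real). Hence for any nonreal $\lambda$ in the half-plane every $W_j$ is nonreal, $\sqrt{W_j}$ has nonzero real part, and the indicial radius is positive; the thresholds $\lambda_j^*$ only matter on the real axis. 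This is exactly the paper's own argument in a different guise: the identity $\tfrac14(x^2 - n^2 - 1)^2 - n^2 - 8\vep = -8yi$ forces $y=0$ whenever $x^2$ is real. The extra asymptotic-in-$y$ step in the paper's proof is aimed at the \emph{uniform} bound on the indicial radius quoted in Proposition \ref{prop:bdresolvent} and made explicit in Remark \ref{rem:ind-poly}, not at the bare ``strictly positive'' conclusion. One minor bookkeeping note on the source: the labels in the paper's \eqref{eqn:mu-root-i} are permuted relative to the proposition statement (which is why the proof asserts $\vep_{V_0}=\vep_{V_2}$ while the statement pairs $V_0$ with $V_1$); your consistent use of the statement's labeling is the correct reading, and your thresholds $\lambda_j^*$ agree with the paper's $\vep_{V_j}$ after undoing that permutation.
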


\begin{proof}
Using \eqref{indroots-pt}, the indicial polynomial of $\lambda - L$ on $V_0$ in this basis convention is
\begin{align*}
I( \lambda - L ) = \lambda+\frac{\mu}{2}(\mu+1)(\mu-n)(\mu-(n-1)),
\end{align*}
and a computer algebra package shows that the roots are given by
\begin{align} \label{eqn:mu-root-0}
\mu_{V_0} = \frac{n-1}{2} \pm \frac{1}{2}\sqrt{ 1 + n^2 \pm 2 \sqrt{n^2 - 8 \lambda}}.
\end{align}

Similarly, using equations \eqref{eqn:ind rts V1}, \eqref{eqn:ind rts V2} and \eqref{eqn:ind rts V3} and solving the resulting indicial polynomial for $\lambda - L$ yields the indicial roots 
\begin{align} \label{eqn:mu-root-i}
\mu_{V_1} &= \frac{n-1}{2} \pm \frac{1}{2}\sqrt{ n^2 - 4n + 5 \pm 2 \sqrt{n^2 - 4n + 4 - 8 \lambda}}, \nonumber \\
\mu_{V_2} &= \frac{n-1}{2} \pm \frac{1}{2}\sqrt{ n^2 +1  \pm 2 \sqrt{n^2 - 8 \lambda}},  \\
\mu_{V_3} &= \frac{n-1}{2} \pm \frac{1}{2}\sqrt{ n^2 -2n + 5  \pm 4 \sqrt{n^2 -2n +1 -2 \lambda}} \nonumber.
\end{align}

We now argue that the indicial radius of $\lambda - L$ is positive for $\lambda$ in a right half-plane containing the imaginary axis.  We first write $\lambda = -\vep + i y$ for $\vep > 0$ and $y \in \bR$.  Next we consider the three distinct ``outermost'' square root expressions in equations \eqref{eqn:mu-root-0} and \eqref{eqn:mu-root-i}.  We must check that each of these expressions has non-zero real-part.  We verify this in two steps, by first checking that the expression has a non-zero real part for each fixed $y \in \bR$, and then checking that this remains true asymptotically as $y$ grows without bound.

\

For example, working with the indicial polynomial for $V_0$ (which is also that of $V_2$), if we set \[x :=\sqrt{ n^2+1 \pm 2\sqrt{n^2 +8 \vep - 8 y i}},\] then we find that 
\begin{align*}
\frac{1}{4} (x^2 - n^2 - 1)^2 - n^2 - 8 \vep = -8 y i. 
\end{align*}
This equation shows that $x$ has a zero real part only if $y=0$.  If $y=0$, then $x$ has a zero real part if 
\begin{align*}
n^2 + 1 - 2\sqrt{n^2 + 8 \vep} < 0,
\end{align*}
which is impossible if $\vep$ is sufficiently small.  A similar argument works for the other cases.  We remark that the threshold values of $\vep$ that force equality are 
\begin{align*}
\vep_{V_0} = \vep_{V_2} &= \frac{(n^2 - 1)^2}{32}, \\
\vep_{V_1} &= \frac{((n-2)^2 - 1)^2}{32}, \\
\vep_{V_3} &= \frac{((n-1)^2 - 4)^2}{32}.
\end{align*}
For $n\geq 3$, it is easy to see $\vep_{V_1} \leq \vep_{V_3} < \vep_{V_0}=\vep_{V_2}$.

\

We now consider the asymptotics of these calculations as $|y| \to \infty$.  In order for $x$  to have a zero real part asymptotically, the argument of $x$ must tend to $\pm\frac{\pi}{2}$. To see that this is impossible, note that the argument of the complex number $n^2 + 8\vep - 8 y i$ tends to $\pm \frac{\pi}{2}$ as $|y| \to \infty$.  Thus the argument of its square root tends to $\pm \frac{\pi}{4}$.  Consequently, with either choice of the sign, the argument of $x$ tends to one of $\pm \frac{\pi}{4}$ or $\pm \frac{3\pi}{4}$, concluding the argument.

\

These considerations show that the constant $\vep_{V_1}$ controls the indicial radius, since for any $\vep \in (0,\vep_{V_1})$, the real parts of the ``outermost'' square root expressions in equations \eqref{eqn:mu-root-0} and \eqref{eqn:mu-root-i} are non-zero. Thus, there exists a constant $a(n) > 0$ such that for Re($\lambda) > -a(n)$ the indicial radius of $\lambda - L$ is strictly positive. 

\end{proof}

\begin{remark}
\label{rem:ind-poly}
For illustration we set an explicit indicial radius of $\frac{n-1}{8}$ and find the corresponding $a(n)$. Let $n \ge 4$.
Using the inequality $\sqrt{1 + x} \leq 1+ \frac{x}{2}$ for $x\geq -1$, we have
\begin{align*}
2 \sqrt{ (n-2)^2 + 8 \vep } \leq 2(n-2) + \frac{8\vep}{n-2},
\end{align*}
so that
\begin{align*}
(n-2)^2+1-2\sqrt{ (n-2)^2 + 8 \vep} \geq (n-3)^2 - \frac{8 \vep}{n-2}.
\end{align*}
To guarantee an indicial radius of $\frac{n-1}{8}$ we require that
\begin{align*}
(n-2)^2+1-2\sqrt{ (n-2)^2 + 8 \vep} \geq \frac{(n-1)^2}{16},
\end{align*}
and so it would suffice for $\vep$ to satisfy 
$(n-3)^2 - \frac{8 \vep}{n-2} > \frac{(n-1)^2}{16}$. 
Thus, for $a(n):=\vep$ with
\begin{align*}
0 \leq a(n) \leq \frac{(n-2)(3n-11)(5n-13)}{128},
\end{align*}
the indicial radius of $\lambda - L$ is at least $\frac{n-1}{8}.$ 

\end{remark}
\
\section{Commutation formul\ae} \label{sec:techappendix}
We record the details of a few of the commutation formul\ae\ used in the text.

\begin{lemma} \label{lemma:commute-delta-delstar} \label{lemma:commute-delta-div}
If $(M^n,h)$ has constant sectional curvature $c$ and $\alpha$ is a  $1$-form, then
\begin{equation}\label{commutator1}
[ \delta^*, \nabla^* \nabla] \alpha = c(n+1) \delta^* \alpha + 2c (\delta \alpha)\,  h
\end{equation}
and
\begin{equation}\label{commutator2}
[K, \nabla^* \nabla] \alpha = c(n+1)  K\alpha. 
\end{equation}
All covariant derivatives and traces are calculated with respect to $h$.
\end{lemma}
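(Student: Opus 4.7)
The plan is to prove both identities by direct index computation, commuting covariant derivatives using the constant-curvature formula $R_{ijkl} = c(h_{il}h_{jk} - h_{ik}h_{jl})$ and the Ricci identity $\alpha_{i,ab} - \alpha_{i,ba} = -R_{abi}{}^s\alpha_s$ (extended to higher-rank tensors with one curvature term per slot).

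For \eqref{commutator1}, I would write $(\nabla^*\nabla\alpha)_i = -\alpha_{i,p}{}^p$ and $(\delta^*\alpha)_{ij} = \tfrac{1}{2}(\alpha_{i,j}+\alpha_{j,i})$, so that
\[
[\delta^*,\nabla^*\nabla]\alpha_{ij} = \tfrac{1}{2}\bigl[(\alpha_{i,jp}{}^p - \alpha_{i,p}{}^p{}_{,j}) + (i\leftrightarrow j)\bigr],
\]
reducing the problem to the fundamental commutator $g^{pq}(\alpha_{i,jpq}-\alpha_{i,pqj})$. I would split this through an intermediate term as
$(\alpha_{i,jpq}-\alpha_{i,pjq}) + (\alpha_{i,pjq}-\alpha_{i,pqj})$. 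The first difference equals $\nabla_q$ applied to the commutator at the inner pair, which by $\nabla R=0$ is just $-R_{pji}{}^s\alpha_{s,q}$; the second difference is $[\nabla_q,\nabla_j]$ applied to the $(0,2)$-tensor $\alpha_{i,p}$, giving $-R_{qji}{}^s\alpha_{s,p} - R_{qjp}{}^s\alpha_{i,s}$. Contracting with $g^{pq}$ and using $R_{abc}{}^d = c(\delta_a^d h_{bc}-h_{ac}\delta_b^d)$, each term collapses to a linear combination of $\alpha_{i,j}$, $\alpha_{j,i}$, and $h_{ij}\alpha^q{}_{,q}$; symmetrizing in $i,j$ and using $\delta\alpha = -\alpha^q{}_{,q}$ produces exactly $c(n+1)\delta^*\alpha_{ij} + 2c(\delta\alpha)h_{ij}$.

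For \eqref{commutator2}, I would use $K\alpha = \delta^*\alpha + \tfrac{1}{n}(\delta\alpha)h$ and the fact that $h$ is parallel (so that $\nabla^*\nabla(fh) = (\nabla^*\nabla f)h$ for any scalar $f$) to write
\[
[K,\nabla^*\nabla]\alpha = [\delta^*,\nabla^*\nabla]\alpha + \tfrac{1}{n}\bigl([\delta,\nabla^*\nabla]\alpha\bigr)\,h.
\]
The scalar commutator $[\delta,\nabla^*\nabla]\alpha$ on $1$-forms can be extracted \emph{for free} by taking the $h$-trace of \eqref{commutator1}: since $\mathrm{tr}_h(\delta^*\beta) = -\delta\beta$ and $\nabla^*\nabla$ commutes with $\mathrm{tr}_h$, one gets $[\delta,\nabla^*\nabla]\alpha = -c(n-1)\delta\alpha$. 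Substituting and collecting coefficients, the combination $2 - (n-1)/n = (n+1)/n$ makes the metric-trace pieces align so that the right-hand side is precisely $c(n+1)K\alpha$.

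The main obstacle is the careful bookkeeping in the triple-derivative commutation above: one must distinguish between commuting the outer two derivative indices of $\alpha_{i,pjq}$ (which generates two curvature contractions, one on the $\alpha_i$ slot and one on the $\alpha_{,p}$ slot) versus commuting the inner two (which generates only one). Getting the signs and index positions straight is what allows the somewhat miraculous coefficient $c(n+1)$ and the cross-term $2c(\delta\alpha)h$ to appear; once the reduction to $\alpha_{i,jpq}-\alpha_{i,pqj}$ is done correctly, the rest is mechanical use of the Kronecker structure of $R_{abc}{}^d$ in constant curvature.
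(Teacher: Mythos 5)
Your proposal is correct and takes essentially the same approach as the paper: both split $g^{pq}(\alpha_{i,jpq}-\alpha_{i,pqj})$ through the same intermediate term $\alpha_{i,pjq}$, producing one curvature term from the inner commutation and two (including a Ricci contraction) from the outer, then derive \eqref{commutator2} by taking the $h$-trace of \eqref{commutator1} to get $[\delta,\nabla^*\nabla]\alpha = -c(n-1)\delta\alpha$.
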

\begin{proof} 
We begin by computing $[\nabla, \nabla^* \nabla] \alpha$ in coordinates:
\begin{align*}
\begin{aligned}
{\alpha_{i,jk}}^k  &= {\alpha_{i,kj}}^k + \riemdudu{j}{k}{i}{s} \alpha_{s,k} \\
&= \alpha_{i,k\phantom{k}j}^{\phantom{i,k}k}  + 2 \riemdudu{j}{k}{i}{s} \alpha_{s,k} + c(n-1) \alpha_{i,j} \\
&= \alpha_{i,k\phantom{k}j}^{\phantom{i,k}k}  + 2 c ( \delta_j^s \delta_i^k - h_{ji} h^{ks} ) \alpha_{s,k} + c(n-1) \alpha_{i,j} \\
&= \alpha_{i,k\phantom{k}j}^{\phantom{i,k}k}  + 2 c  \alpha_{j,i} - 2c {\alpha_{k,}}^k h_{ij} + c(n-1) \alpha_{i,j} ;
\end{aligned}
\end{align*}
we now symmetrize in $i, j$, to get
\begin{align*}
\begin{aligned}
\frac{1}{2} ( {\alpha_{i,jk}}^k + {\alpha_{j,ik}}^k ) 
= \frac{1}{2} ( \alpha_{i,k\phantom{k}j}^{\phantom{i,k}k} + \alpha_{j,k\phantom{k}i}^{\phantom{j,k}k})   - 2c {\alpha_{k,}}^k h_{ij} + 
c(n+1) \frac12 (\alpha_{i,j}  + \alpha_{j,i} )
\end{aligned}
\end{align*}
which is \eqref{commutator1}. Next, taking traces, we see that
\[
[ \delta, \nabla^* \nabla]\alpha = -c (n-1) \delta \alpha;
\]
hence,
\[
[K, \nabla^* \nabla]\alpha = [ \delta^* + \frac{1}{n}(\delta \, \cdot \, ) \, h, \nabla^* \nabla] = c(n+1) \delta^* \alpha + 
c (2 - \frac{n-1}{n}) (\delta \alpha)\, h = c(n+1) K\alpha
\]
as claimed.
\end{proof}

\begin{lemma} With the same assumptions as Lemma \ref{lemma:commute-delta-delstar}, if $v$ is a symmetric $2$-tensor, then
\begin{align} \label{eqn:commute-bilap}
(\nabla^* \nabla)^2 v = (\nabla^*)^2\nabla^2 v + c(n-1) \nabla^* \nabla v  + 4c^2 (\tr v)\, h - 4c^2 n v.
\end{align}
\end{lemma}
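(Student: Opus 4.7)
The plan is to prove \eqref{eqn:commute-bilap} by direct index computation, using the Ricci identity and then specializing to constant sectional curvature. The key observation is that $(\nabla^*\nabla)^2 v$ and $(\nabla^*)^2\nabla^2 v$ differ only in the order of two of the four covariant derivatives: in the paper's comma notation they become $v_{ij,p}{}^{p}{}_{m}{}^{m}$ and $v_{ij,mp}{}^{pm}$ respectively, and since $\nabla^*\nabla = -\Delta$ on tensors, the claim reduces to establishing the purely curvature-generated identity
\begin{equation*}
v_{ij,p}{}^{p}{}_{m}{}^{m} - v_{ij,mp}{}^{pm} = -c(n-1)\,v_{ij,p}{}^{p} + 4c^2 (\tr v)\,h_{ij} - 4c^2 n\, v_{ij}.
\end{equation*}

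I would begin by rewriting the left-hand side as $\nabla^m[\nabla_m,\nabla^p \nabla_p]v_{ij}$ and then expanding the commutator as $\nabla^m[\nabla_m,\nabla^p]\nabla_p v_{ij} + \nabla^m \nabla^p [\nabla_m,\nabla_p] v_{ij}$. Both commutators are evaluated using the Ricci identity on rank-two and rank-three tensors; for instance,
\begin{equation*}
[\nabla_a,\nabla_b]v_{ij} = -R_{ab}{}^{s}{}_{i} v_{sj} - R_{ab}{}^{s}{}_{j} v_{is},
\end{equation*}
with the analogous formula on $\nabla_p v_{ij}$ carrying an extra $-R_{ab}{}^{s}{}_{p}\nabla_s v_{ij}$ term. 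Substituting $R_{abcd} = c(h_{ad}h_{bc} - h_{ac}h_{bd})$ immediately collapses each bracket into a combination of $v_{ij}$, $(\tr v)h_{ij}$, and pieces involving the divergence $\nabla^p v_{pj}$. When the outer $\nabla^m$ subsequently meets these divergence pieces, a third application of the Ricci identity together with $\Ric = c(n-1)h$ converts the residue into a clean multiple of $\nabla^*\nabla v_{ij}$. Collecting terms yields exactly the three corrections: the linear piece $c(n-1)\nabla^*\nabla v$ emerges from the single-commutator step, while the quadratic pieces $4c^2(\tr v)h - 4c^2 n v$ come from the cascade of constant-curvature contractions.

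The main obstacle is bookkeeping: each commutator releases five or six index-permuted terms, several of which carry a divergence that must itself be re-commuted before a recognizable geometric expression appears. The features that make everything collapse are the symmetry $v_{ij} = v_{ji}$, which pairs mirror-image summands, and the constant-curvature form of the Riemann tensor, which ensures that no Weyl or trace-free Ricci residue survives---on a general Riemannian background the same calculation would leave behind such independent curvature contributions, and their absence is exactly what makes this identity as clean as stated.
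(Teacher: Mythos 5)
Your proposal is correct and follows essentially the same route as the paper, which also expands $(\nabla^*\nabla)^2 v - (\nabla^*)^2\nabla^2 v$ via the two Ricci-identity commutators $\nabla^m[\nabla_m,\nabla^p]\nabla_p v$ and $\nabla^m\nabla^p[\nabla_m,\nabla_p]v$, substitutes the constant-curvature Riemann tensor, re-commutes the residual divergence terms, and collects. The one narrative imprecision is minor: in the paper's calculation the re-commutation of the divergence residue produces the zeroth-order corrections $-2c^2 n\,v + 2c^2(\tr v)h$ (from each of the two commutators), while the $c(n-1)\nabla^*\nabla v$ piece comes directly from the Ricci-contraction term $\riemuddu{p}{m}{p}{s} v_{ij,s}$ in the first commutator rather than from re-commuting a divergence.
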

\begin{proof}
Once again calculating in coordinates, 
\begin{align*}
\vdddudu{i}{j}{p}{p}{m}{m} &= \vdddu{i}{j}{pm}{pm} + ( \riemuddu{p}{m}{i}{s} v_{sj,p} + \riemuddu{p}{m}{j}{s} v_{is,p} + \riemuddu{p}{m}{p}{s} v_{ij,s} )^{,m} \nonumber \\
&= \vdddu{i}{j}{pm}{pm} + c \vddud{p}{j}{p}{i} - c \vdddu{p}{j}{i}{p} + c \vddud{p}{i}{p}{j} - c \vdddu{p}{i}{j}{p} - c(n-1) \vdddu{i}{j}{p}{p}. 
\end{align*}
Substituting the expression
\begin{align*}
c \vddud{p}{j}{p}{i} - c \vdddu{p}{j}{i}{p} &= \riemuddu{p}{i}{p}{s} v_{sj} + \riemuddu{p}{i}{j}{s} v_{ps} = -c n v_{ij} + c \vdu{p}{p} h_{ij}
\end{align*}
into this gives 
\begin{align} \label{eqn:loc-b-1}
\vdddudu{i}{j}{p}{p}{m}{m} &=\vdddu{i}{j}{pm}{pm} -2 c^2 n v_{ij} + 2 c^2 \vdu{p}{p} h_{ij} - c(n-1) \vdddu{i}{j}{p}{p}. 
\end{align}
Commuting derivatives once more,
\begin{align} \label{eqn:loc-b-2}
\vdddu{i}{j}{pm}{pm} &= \vdddu{i}{j}{mp}{pm} + ( \riemdddu{p}{m}{i}{s} \vdd{s}{j} + \riemdddu{p}{m}{j}{s} \vdd{i}{s} )^{,pm} \nonumber \\
&= \vdddu{i}{j}{mp}{pm} + c \vddud{p}{j}{p}{i} - c \vdddu{p}{j}{i}{p} + c \vddud{p}{i}{p}{j} - c \vdddu{p}{i}{j}{p} \nonumber \\
&= \vdddu{i}{j}{mp}{pm} -2 c^2 n v_{ij} + 2 c^2 \vdu{p}{p} h_{ij},
\end{align}
so finally, combining \eqref{eqn:loc-b-1} and \eqref{eqn:loc-b-2} we obtain at \eqref{eqn:commute-bilap}.
\end{proof}

\begin{lemma} If $v$ is a traceless symmetric $2$-tensor, then
\begin{align}
\label{eqn:commut-trfree}
\vdddudu{j}{k}{i}{m}{m}{k} = \, & \vddudud{j}{k}{k}{m}{m}{i} +  2c \vddud{i}{m}{m}{j} - 2c \vdudu{p}{m}{m}{p} h_{ij} \nonumber \\ & 
+ c (n+2) \vdddu{i}{j}{m}{m} + 2c(n-1) \vddud{j}{m}{m}{i} + c^2 (n^2+n) v_{ij}.
\end{align}
 \end{lemma}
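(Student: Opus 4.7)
The identity is a pure commutator computation: both sides involve the same four covariant derivatives of $v$ applied in different orders. Unwinding the index notation, the LHS reads $\nabla^k \nabla_m \nabla^m \nabla_i v_{jk}$, while the leading summand on the RHS is $\nabla_i \nabla^m \nabla_m \nabla^k v_{jk}$. All remaining terms on the RHS are exactly the curvature corrections that arise from reordering these four derivatives. My plan is therefore to start from the LHS and reorder the derivatives one pair at a time using the Ricci commutator identity $[\nabla_a, \nabla_b] T_{c_1 \cdots c_p} = -\sum_r R_{abc_r}{}^s T_{c_1 \cdots s \cdots c_p}$, simplifying every commutator with the constant-curvature substitution $R_{abcd} = c(h_{ad} h_{bc} - h_{ac} h_{bd})$ and using $\tr^h v = 0$ together with $v_{jk} = v_{kj}$ to collapse the resulting trace terms.

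I will carry out the reordering in three commutation steps. First, commute the innermost $\nabla_i$ through the Bochner Laplacian $\nabla_m \nabla^m$ acting on the $(0,2)$-tensor $v_{jk}$; this is the tensorial analogue of \eqref{commutator1} applied to a 2-tensor, and on a constant-curvature space it produces $\nabla_i \nabla_m \nabla^m v_{jk}$ plus terms of the form $c n \, \nabla_i v_{jk}$ and symmetric $h$-contracted traces (the latter collapsing thanks to $\tr^h v = 0$). Second, apply the outer $\nabla^k$ and write $\nabla^k \nabla_i = \nabla_i \nabla^k + [\nabla^k, \nabla_i]$; the commutator, acting on the traceless 2-tensor $\nabla_m \nabla^m v_{jk}$ with $k$ contracted against $v$, evaluates in constant curvature to a multiple of $\nabla^m \nabla_m v_{ij}$, which combines with later contributions to account for the $c(n+2) \nabla^m \nabla_m v_{ij}$ summand on the RHS. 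Third, commute $\nabla^k$ past $\nabla_m \nabla^m$ acting on $v_{jk}$; because this reduces the inner index pair to the 1-form $\nabla^k v_{jk} = -(\delta v)_j$, the resulting curvature remainders take the form of first derivatives of $\delta v$, producing the terms $2c \, \nabla_j (\delta v)_i$, $2c(n-1) \, \nabla_i (\delta v)_j$, and the scalar double-divergence contribution $-2c \, v^{pm}{}_{,mp} h_{ij}$, while the pure zeroth-order piece $c^2(n^2+n) v_{ij}$ arises from nesting two constant-curvature commutators.

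Assembling the three stages, the leading piece collapses to $\nabla_i \nabla^m \nabla_m \nabla^k v_{jk}$, matching the first summand on the RHS of \eqref{eqn:commut-trfree}, and every curvature remainder falls into exactly one of the five explicit term types on the RHS. Verifying the stated integer coefficients $2$, $-2$, $(n+2)$, $2(n-1)$, and $(n^2 + n)$ is then a matter of tallying how many structurally identical copies of each term are generated across the three commutators and the subsequent applications of the remaining outer $\nabla_i$ and $\nabla^k$ to the curvature remainders.

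The main obstacle is purely bookkeeping. Four covariant derivatives on a symmetric $(0,2)$-tensor produce, through three nested commutators, a proliferation of curvature terms, and because indices are raised and lowered mid-computation it is easy to misalign a summation index or lose a sign — especially since the same kind of commutator is applied once to a 2-tensor and once to a 3-tensor. The most reliable discipline appears to be to fix the innermost-first order of commutation above, expand each commutator via the constant-curvature Riemann tensor immediately after it appears, and only at the very end invoke $\tr^h v = 0$ and $v_{jk} = v_{kj}$ to collapse the accumulated terms onto the five types on the RHS.
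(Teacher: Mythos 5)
Your overall strategy — reordering the four covariant derivatives one pair at a time via the Ricci commutation identity, substituting the constant-curvature Riemann tensor, and then using $v_{jk}=v_{kj}$ and $\tr^h v=0$ — is exactly the approach the paper takes, though the paper chooses to commute from the outermost pair inward (first swapping $\nabla^k$ with $\nabla_m$) whereas you propose to work from the inside out; the final result is insensitive to the order.

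There is, however, a concrete error in your step-one bookkeeping that must be repaired before the coefficient tally can come out right. You claim that commuting $\nabla_i$ through $\nabla^m\nabla_m$ produces, besides $cn\,\nabla_i v_{jk}$-type terms, "symmetric $h$-contracted traces" that "collapse thanks to $\tr^h v = 0$." In fact a direct computation on a constant-curvature manifold gives
\begin{equation*}
[\nabla_i,\nabla^m\nabla_m]\,v_{jk} \,=\, -c(n-1)\,\nabla_i v_{jk} - 2c\,\nabla_j v_{ik} - 2c\,\nabla_k v_{ij} + 2c\,h_{ij}\,\nabla^m v_{mk} + 2c\,h_{ik}\,\nabla^m v_{mj},
\end{equation*}
and the last two summands are contractions of the \emph{derivative} index against $v$ (i.e.\ divergences $\nabla^m v_{m\bullet}$), not traces $h^{jk}v_{jk}$. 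They do not vanish for a traceless $v$; on the contrary, after the remaining $\nabla^k$ is applied, the $h_{ik}$-term becomes $2c\,\nabla_i\nabla^m v_{mj}$ and the $h_{ij}$-term becomes $2c\,h_{ij}\,\nabla^k\nabla^m v_{mk}$, which are precisely the $2c(n-1)\,\vddud{j}{m}{m}{i}$ and $-2c\,\vdudu{p}{m}{m}{p}\,h_{ij}$ blocks that appear on the right-hand side of \eqref{eqn:commut-trfree} (together with additional contributions from the later commutators). So these terms are essential, not negligible, and your attribution of the divergence-type terms solely to step three is off as well. Once you track the $h$-contracted divergence terms from step one and carry them through $\nabla^k$, the rest of your scheme should close correctly and reproduce the stated coefficients.
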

 \begin{proof} First, 
\begin{align} \label{eqn:loc-com-0}
\vdddudu{j}{k}{i}{m}{m}{k} &= \vddduud{j}{k}{i}{m}{k}{m} + \riemdddu{m}{k}{j}{s} \vdudu{s}{k}{i}{m} + \riemdudu{m}{k}{k}{s} \vdddu{j}{s}{i}{m} + \riemdddu{m}{k}{i}{s} \vdudu{j}{k}{s}{m} + \riemddud{m}{k}{m}{s} \vdudu{j}{k}{i}{s}\nonumber \\
&= \vddduud{j}{k}{i}{m}{k}{m} +\riemdddu{m}{k}{j}{s} \vdudu{s}{k}{i}{m} + c(n-1) \vdddu{j}{s}{i}{s}  + \riemdddu{m}{k}{i}{s} \vdudu{j}{k}{s}{m} -c(n-1) \vdddu{j}{s}{i}{s} \nonumber  \\
&= \vddduud{j}{k}{i}{m}{k}{m} + c \vdddu{s}{j}{i}{s}  - c \vdudd{s}{s}{i}{j}  + c \vdddu{j}{i}{s}{s}  - c \vddud{j}{s}{s}{i}  \nonumber \\
 &= \vddduud{j}{k}{i}{m}{k}{m} + 
  c \vdddu{j}{i}{s}{s} + c^2 n v_{i j},
\end{align}
so commuting derivatives again gives
\begin{align} \label{eqn:loc-com-1}
\vddduud{j}{k}{i}{m}{k}{m} &= 
\vdddudu{j}{k}{i}{k}{m}{m}  + (\riemdddu{m}{k}{j}{s} \vdud{s}{k}{i}  + \riemdudu{m}{k}{k}{s} v_{{j}{s},{i}} + \riemdddu{m}{k}{i}{s} \vdud{j}{k}{s})_{,}^{\phantom{,}m} \nonumber \\
&= \vdddudu{j}{k}{i}{k}{m}{m}  + c \vdddu{s}{j}{i}{s}  - c \vdudd{s}{s}{i}{j}  + c(n-1) \vdddu{j}{s}{i}{s}  +  c \vdddu{j}{i}{s}{s} - c \vdudd{j}{s}{s}{i} \nonumber \\
&= \vdddudu{j}{k}{i}{k}{m}{m}  + cn \vdddu{j}{s}{i}{s}  +  c \vdddu{j}{i}{s}{s}  - c \vdudd{j}{s}{s}{i}.
\end{align}
Substituting 
\begin{align*}
\vdddu{j}{s}{i}{s}  = \vddud{j}{s}{s}{i} + \riemdudu{i}{s}{j}{p}  v_{ps} + \riemdudu{i}{s}{s}{p} v_{jp}  = \vddud{j}{s}{s}{i} + cn v_{ij}.
\end{align*}
into \eqref{eqn:loc-com-1}, we obtain
\begin{align*}
\vddduud{j}{k}{i}{m}{k}{m} &= \vdddudu{j}{k}{i}{k}{m}{m} + c(n-1) \vddud{j}{s}{s}{i} + c^2 n^2 v_{ij}  +  c \vdddu{i}{j}{s}{s},
\end{align*}
so \eqref{eqn:loc-com-0} becomes
\begin{align}\label{eqn:loc-com-2}
\vdddudu{j}{k}{i}{m}{m}{k} &= \vdddudu{j}{k}{i}{k}{m}{m} + c(n-1) \vddud{j}{s}{s}{i}  + c^2 (n^2+n) v_{ij}  +  2 c \vdddu{i}{j}{s}{s}.
\end{align}
Next, 
\begin{align*}
\vdddudu{j}{k}{i}{k}{m}{m} = \vdduddu{j}{k}{k}{i}{m}{m} + c n \vdddu{j}{i}{m}{m},
\end{align*}
which transforms \eqref{eqn:loc-com-2} to 
\begin{align} \label{eqn:loc-com-3}
\vdddudu{j}{k}{i}{m}{m}{k} &=\vdduddu{j}{k}{k}{i}{m}{m} + c (n+2) \vdddu{i}{j}{m}{m} + c(n-1) \vddud{j}{s}{s}{i} + c^2 (n^2+n) v_{ij}.
\end{align}
Similarly, 
\begin{align*}
\vdduddu{j}{k}{k}{i}{m}{m}   \vdduddu{j}{k}{k}{m}{i}{m} + c \vddud{i}{m}{m}{j}  - c \vdudu{p}{m}{m}{p}  h_{ij},   
\end{align*}
which replaces \eqref{eqn:loc-com-3} by 
\begin{align*}
\vdddudu{j}{k}{i}{m}{m}{k} = \, & \vdduddu{j}{k}{k}{m}{i}{m} + c \vddud{i}{m}{m}{j} - c \vdudu{p}{m}{m}{p}  h_{ij} + c (n+2) \vdddu{i}{j}{m}{m}  
\nonumber \\ & + c(n-1) \vddud{j}{s}{s}{i}  + c^2 (n^2+n) v_{ij}.
\end{align*}
Commuting derivatives one final time, 
\begin{align*}
\vdduddu{j}{k}{k}{m}{i}{m} 
 &= \vddudud{j}{k}{k}{m}{m}{i} +  c \vddud{i}{m}{m}{j}  - c \vdudu{p}{m}{m}{p}  h_{ij} + c(n-1) \vddud{j}{k}{k}{i},
\end{align*}
which yields \eqref{eqn:commut-trfree}.
\end{proof}

\end{document}